\documentclass[11pt,a4paper]{article}
\usepackage{amsthm,amsmath,amssymb,amsfonts,dsfont}
\usepackage{a4,a4wide}
\usepackage{graphics}
\usepackage{color}
\usepackage[T1]{fontenc}
\usepackage[colorlinks=true]{hyperref}

\newtheorem{theo}{\textbf{Theorem}}[section]

\newtheorem{lem}[theo]{\textbf{Lemma}}
\newtheorem{prop}[theo]{\textbf{Proposition}}

\newtheorem{cor}[theo]{\textbf{Corollary}}
\newtheorem{defi}[theo]{\textbf{Definition}}

\newtheorem{rem}[theo]{\textbf{Remark}}
\newtheorem{ass}{\textbf{Assumption}}

\newcommand{\rb}[1]{{
#1}}
\newcommand{\ra}[1]{{
#1}}

\newcommand{\gr}[1]{{
#1}}

\title{Macroscopic limit from a structured population model to the Kirkpatrick-Barton model}

\def\signgr{\bigskip \begin{center} {\sc Ga\"el
      Raoul\par\vspace{3mm}
      CMAP, CNRS, Ecole polytechnique, Institut Polytechnique de Paris, 91120 Palaiseau, France\par
      \par\vspace{3mm} e-mail:}
    \tt{gael.raoul@polytechnique.edu} 
    \end{center}}
\begin{document}

\author{G. Raoul}


\maketitle


\begin{abstract} 
We consider an ecology model in which the population is structured by a spatial variable and a phenotypic trait. The model combines  a parabolic operator on the spatial variable  with a kinetic operator on the trait variable. \gr{We prove the existence of solutions to that model, and show that these solutions are unique.} The kinetic operator present in the model, that represents the effect of sexual reproductions, satisfies a Tanaka-type inequality: it implies a contraction of the Wasserstein distance in the space of phenotypic traits. We combine this contraction argument with parabolic estimates controlling the spatial regularity of solutions to prove the convergence of the population size and the mean phenotypic trait to solutions of the Kirkpatrick-Barton model, which is a well-established model in evolutionary ecology. Specifically, at high reproductive rates, we provide explicit convergence estimates for the moments of solutions of the kinetic model. 
\end{abstract}

\noindent Keywords: structured population, infinitesimal model, selection-mutation, asymptotic analysis, macroscopic limit, Wasserstein estimates, parabolic estimates, mathematical ecology.

\medskip

\noindent \underline{MSC 2000 subject classification:} 35B40, 35K57, 92D15, 92D25,92D40.

%
%
%
%
%
%
%
%
%
%

\section{Introduction}

We are interested in a structured population model that describes the dynamics of a biological population (typically a species of trees subject to climate change). At each time $t\geq 0$ the population is structured by a phenotypic trait $y\in\mathbb R$ and a spatial variable $x\in \mathbb T^d$ (the $d\in\mathbb N^*$ dimensional torus).  
The population is then represented by a density $n=n(t,x,y)$, and the dynamics of this population is given by the \emph{Spatially structured Infinitesimal Model} (see \cite{Mirrahimi}):
\begin{align*}
&\phantom{dsqgqdfer}\partial_t n(t,x,y)=\Delta_x n(t,x,y)+\left(1+\frac A2-\frac 12(y-y_{opt}(t,x))^2-\int_{\mathbb R} n(t,x,z)\,dz\right)n(t,x,y)\nonumber\\
&\phantom{\partial_t n(t,x,y)=fsgerr}+\gamma\left(\iint_{\mathbb R^2} \Gamma_{A/2}\left(y-\frac{y_*+y_*'}2\right)\frac{n(t,x,y_*)n(t,x,y_*')}{\int_{\mathbb R} n(t,x,z)\,dz}\,dy_*\,dy_*'-n(t,x,y)\right),\\[-35pt]
&{\bf (SIM)}\nonumber\\ \nonumber
\end{align*}
where $A>0$ is the \emph{phenotypic variance at linkage equilibrium} of the population (see \cite{Fisher2,Bulmer}), $y_{opt}: \mathbb T^d\to \mathbb R$ is a description of the environment (typically $y_{opt}(t,x)$ is the temperature at time $t$ and location $x$), and $\Gamma_{A/2}:\mathbb R\to\mathbb R_+$ denotes the Gaussian distribution with variance $A/2$:
\[\Gamma_{A/2}(y):= \frac 1 {\sqrt{2\pi A}}e^{\frac{-|y|^2}{A}}.\]
\rb{Note that generalisations of the (SIM), for instance more general selection operators, could be introduced, but we focus on that specific model in this manuscript, because of its connection to the Kirkpatrick-Barton model that we describe below.} (SIM) is composed of parabolic terms, common in ecology models (see the Fisher-KPP equation \cite{KPP}, and \cite{Alfaro1}), and a kinetic term, with a factor $\gamma>0$, representing the effect of sexual reproduction. Beyond the importance of this model for applications, (SIM) is an opportunity to develop the analysis methods introduced for other kinetic models (in particular the Boltzmann equation), using an unusual diffusion term in the space variable. This diffusive term enables us to propose a new method to derive a macroscopic limit: using from Wasserstein estimates on the collision operator, we are able to show that when $\gamma>0$ is large, the dynamics of $n$ can be described by a closed equation on its two first moments.

Indeed, we show that if $\gamma>0$ is large, the solutions of (SIM) satisfy 
\[n(t,x,y)\sim N(t,x)\Gamma_A\left(y-Z(t,x)\right),\]
 where the macroscopic quantities $N$ and $Z$ asymptotically  satisfy the \emph{Kirkpatrick-Barton Model}: 
\begin{equation*}
{\bf (KBM)}\phantom{zer} \left\{\begin{array}{l}
\partial_t N(t,x)-\Delta_x N(t,x)= \left[1-\frac 12(Z(t,x)-y_{opt}(t,x))^2-N(t,x)\right]N(t,x),\phantom{fazer}\\ \\
\partial_t Z(t,x)-\Delta_x Z(t,x)=2\frac{\nabla_xN\cdot\nabla_x Z}N(t,x)-A(Z(t,x)-y_{opt}(t,x)).
\end{array}\right.
\end{equation*}
(KBM), introduced in \cite{Kirkpatrick}, is used in the evolutionary ecology literature. So far, (SIM) and (KBM) have received limited attention from the mathematical community. In \cite{Prevost} the existence of solutions for models related to (SIM) is discussed. In \cite{Mirrahimi} the propagation fronts for a simplified model are constructed (this article also contains non-rigorous asymptotics related to the present study), while in \cite{Miller}, the long time dynamics of a different simplified model is discussed. In \cite{Miller2}, travelling waves and steady distributions have been constructed for (\ra{KBM}) when a parameter is small. This remarkable result is obtained by a perturbative argument around the case $\varepsilon=0$ which corresponds to the \ra{Fisher-KPP equation~\cite{FKPP1,KPP}: the small parameter $\varepsilon>0$ could correspond to a weak selection and a weakly heterogeneous environment}. Finally, we mention the study of acceleration fronts for sexual reproduction models in \cite{Dekens1,Calvez5}.  We refer to Section~\ref{subsec:biology} for a discussion of the biological aspects of (SIM), (KBM), and the biological implications of this manuscript's results.

In the case of asexual populations, the last term of (SIM) simplifies considerably: it is then replaced by a local term plus a diffusive part (that represents mutations). These asexual population models have recently received considerable attention, and the propagation phenomena that they present are now well understood. The main idea in the asexual case is to consider the model as a semi-linear parabolic equation, to control the non-local competition term through a Harnack inequality, and to use topological fixed-point arguments to construct propagation fronts \cite{Alfaro1,Berestycki,Bouin2}. Additional difficulties arise when the phenotypic trait $y$ has an impact on the spatial diffusion of individuals in space (see \cite{Bouin2,Turanova,BerestyckiN}), and these models can lead to acceleration fronts \cite{BerestyckiN,Bouin3}. Finally, when the mutation rate is low, these asexual models can be related to constrained Hamilton-Jacobi equations \cite{Bouin4,Turanova,Bouin1}. Note that in the asexual case, the propagation speed of the population (which plays an important role in biology) is given by a linearisation of the model, and is then explicit in terms of a certain principal eigenvalue problem. This simple characterisation of the propagation velocity is  no longer valid in the case of sexual populations, and the macroscopic limit described here can be used to describe the propagation phenomena for (SIM) (we refer to \cite{Caflish,Liu} for a related idea in mathematical physics).

The macroscopic limit we present here is based on the Wasserstein contraction induced by the reproduction operator (see Theorem~\ref{thm:Tanaka}). This contraction property exists for a series of operators appearing in physics or econometry \cite{Bassetti,Bolley,Villani2}, and was originally obtained by Tanaka \cite{Tanaka}. To our knowledge, few rigorous macroscopic/hydrodynamic results have been derived from it (see \cite{Magal} for a result without spatial structure). Note that the strategy adopted here is to combine Wasserstein estimates (for the reproduction term) with estimates of a different nature (parabolic estimates for the spatial dimension). This strategy is related to the work of Carlen and Gangbo \cite{Carlen} (see also \cite{Agueh}), who are interested in a kinetic Fokker-Planck equation that combines a hyperbolic transport term in space with a kinetic operator in velocity space. This kinetic operator involves a contraction of the Wasserstein distance. The authors show the long time convergence of solutions to the set of local Maxwellians, but this large-time convergence is not quantitative, due to the lack of regularity estimates in the spatial variable. In the present study, the presence of a diffusive term in the space variable allows us to push the analysis further. Finally, we are also able to handle the selection/competition term to justify the macroscopic limit of (SIM) described above.

Recently, structured population models with sexual reproduction but without a spatial variable have attracted attention. \gr{The existence and uniqueness of solutions for such problems has been considered in \cite{Patout,Guerand}.} A difficult problem, studied in \cite{Calvez1,Calvez2,Calvez3,Raoul} is to show that solutions converge to a unique steady-state when the selection term has a single local maximum. A second approach (in fact closely related to the asymptotic $\gamma\gg 1$ that we consider in this manuscript) is to assume that the reproduction kernel $\Gamma_{A/2}$ has a small variance, see \cite{Patout,Dekens2,Taing,Calvez4,Dekens1}. \gr{This small variance approach was also also considered in \cite{Guerand}, where an analysis based on moments of the distribution was introduced to describe the dynamics of solutions. }

\section{Main results and organisation of the paper}
\gr{
In this manuscript, we assume that the optimal phenotypic trait $(t,x)\mapsto y_{opt}(t,x)$ and the initial population $(x,y)\mapsto n^0(x,y)$ satisfy
\begin{ass}\label{Assumption}\label{Assumption3}  For some $C_1>0$,
\begin{enumerate}
\item[(i)] $y_{opt}\in C^1(\mathbb R_+\times \mathbb T^d,\mathbb R)$ such that $\|y_{opt}\|_{W^{1,\infty}(\mathbb R_+\times \mathbb T^d,\mathbb R)}<C_1$.
\item[(ii)] $n^0\in C^0(\mathbb T^d\times \mathbb R,\mathbb R_+)$, such that $n^0>0$ and
\[\forall (x,y)\in\mathbb T^d\times\mathbb R,\quad 0<n^0(x,y)\leq \frac{C_0}{1+y^{10}},\]
as well as, for $k\in\{0,1,4\}$,
\[\left\|x\mapsto \int_{\mathbb R} y^kn^0(x,y)\,dy\right\|_{W^{4,\infty}(\mathbb T^d)}\leq C_1.\]
\end{enumerate}
\end{ass}
The assumption $n^0(x,y)\leq \frac{C_0}{1+y^{10}}$ seems a strong assumption, but it is actually coherent with biological applications where populations typically have Gaussian tails in the phenotypic variable. The last regularity assumptions is technical: it is a $W^{4,\infty}(\mathbb T^d)$ bound on moments of the initial population. For $k=0$, it corresponds to a regularity assumption on the initial population size $N^0$, and on the initial mean phenotypic trait $Z^0$ for $k=1$. the case $k=4$ corresponds to a higher moment that will be useful for asymptotic analysis we will perform in this manuscript. Thanks to these assumptions, we will be able to show that $N$ and $Z$, that is 
\begin{equation}\label{NZ}
N(t,x)=\int_{\mathbb R} n(t,x,y)\,dy,\quad Z(t,x)=\int_{\mathbb R} y\frac{n(t,x,y)}{\int_{\mathbb R} n(t,x,z)\,dz}\,dy,
\end{equation}
are regular in both variables (see Remark~\ref{rem:thm}). We will then be able to apply the comparison principle to several equations, which will be crutial in our analysis. Note that $N(t,x)$ and $Z(t,x)$ represent respectively the population size and the mean phenotypic trait \ra{at time $t\geq 0$ and position $x\in\mathbb T^d$}. One could probably consider weaker assumptions on $n^0$, but we leave this problem for future research.  }

\subsection{Existence and uniqueness of solutions of (SIM)}
\label{subsec:existencemain}
\gr{ In this section, we consider (SIM) with given coefficients, and in particular, $\gamma>0$ is fixed. We indicate the dependency of the constants in $\gamma>0$ for readability (see e.g. \eqref{est:uppery6}), but not in the proofs. The next section will be devoted to the asymptotic limit $\gamma\to \infty$ and the impact of $\gamma>0$ will then be carefully monitored. We define solutions of (SIM) as follows:
\begin{defi} \label{def:sol}
Let $T\in[0,\infty]$, $A,\gamma>0$, $y_{opt}\in C^1([0,T]\times \mathbb T^d)$ and $n^0\in C^0(\mathbb T^d\times\mathbb R,\mathbb R_+)$.
$n\geq 0$ is a solution of (SIM) with initial data $n^0$ if $n\in L^\infty([0,T]\times\mathbb T^d,L^1(\mathbb R))$, $n\in L^2([0,T]\times\mathbb R, H^1(\mathbb T^d))$, $\partial_t n\in L^2([0,T]\times\mathbb R, H^{-1}(\mathbb T^d))$, and satisfies, for $v\in L^2(\mathbb R\times H^1(\mathbb T^d))$ and almost every $t\in[0,T]$, 
\begin{align}
&\int_{\mathbb R}\int_{\mathbb T^d} \partial_t n(t,x,y)v(x,y)\,dx \,dy+
\int_{\mathbb R}\int_{\mathbb T^d} \nabla_x n(t,x,y)\cdot\nabla_x v(x,y)\,dx \,dy\nonumber\\
&= \int_{\mathbb T^d}\int_{\mathbb R} \left(1+\frac A2-\frac 12(y-y_{opt}(t,x))^2-\int_{\mathbb R} n(t,x,z)\,dz\right)n(t,x,y)v(x,y)\,dy\,dx\nonumber\\
&\quad + \gamma\int_{\mathbb T^d}\int_{\mathbb R} v(x,y))\left[\iint_{\mathbb R^2} \Gamma_{A/2}\left(y-\frac{y_*+y_*'}2\right)\frac{n(t,x,y_*)n(t,x,y_*')}{\int_{\mathbb R} n(t,x,z)\,dz}\,dy_*\,dy_*'-n(t,x,y)\right]\,dy\,dx,\label{eq:def:sol}
\end{align}
it should also satisfy $n(0,\cdot,\cdot)=n^0$
and for any $\tilde T\in [0,T]$, $\tilde T<\infty$, there should be a constant $\bar C>0$ such that
\begin{align}\label{est:regnSIM}
&\sup_{y\in\mathbb R}\left(\sup_{t\in[0,\tilde T]}\|n(t,\cdot,y)\|_{H^1(\mathbb T^d)}+\|n(\cdot,\cdot,y)\|_{L^2_{loc}([0,\tilde T),H^2(\mathbb T^d))}+\|\partial_t n(\cdot,\cdot,y)\|_{L^2_{loc}([0,\tilde T)\times \mathbb T^d)}\right) <\bar C<\infty.
\end{align}
\end{defi}
This notion of solution is closely connected to the notion of strong solutions in e.g. \cite{Liebermann}. We prove the existence of such solutions of (SIM) in the following proposition:
\begin{prop}\label{prop:existSIM}

Let $y_{opt}\in W^{1,\infty}(\mathbb R_+\times\mathbb T^d)$,  $n^0$ satisfying Assumption~\ref{Assumption3}, $A>0$, and $\gamma>2+A+\|y_{opt}\|_{L^\infty(\mathbb R_+\times\mathbb T^d)}$. There is a unique global solution $n$ of (SIM) with the initial data $n^0$, in the sense of definition~\ref{def:sol}. More precisely, for a constant $C_\gamma$ that may depend on $\gamma$ and $(t,x,y)\in\mathbb R_+\times\mathbb T^d\times\mathbb R$,
\begin{equation}\label{est:uppery6}
 n(t,x,y)\leq \frac{C_\gamma}{1+y^{10}},
\end{equation}
while for any $T>0$, there is $C_{\gamma,T}>0$ such that
\begin{align}\label{est:regn3}
&\textrm{sup}_{t\in[0,T]}\|n(t,\cdot,y)\|_{H^1(\mathbb T^d)}+\|n(\cdot,\cdot,y)\|_{L^2([0,T],H^2(\mathbb T^d))}+\|\partial_t n(\cdot,\cdot,y)\|_{L^2([0,T]\times \mathbb T^d)} \leq \frac {C\gamma(1+\gamma)}{1+y^8}.
\end{align}
Moreover, for $(t,x,y)\in \mathbb R_+\times\mathbb T^d\times[-1,1]$,
\begin{equation}\label{eq:lowernprop}
n(t,x,y)\geq \left(\min_{\mathbb R_+\times\mathbb T^d\times[-1,1]}n^0\right)e^{-C_\gamma t}.
\end{equation}
\end{prop}

\begin{rem}\label{rem:comparaison}
In Section~\ref{subsec:regNV}, we show that the macroscopic quantities $N$ (resp. $Z$, $V$) defined by \eqref{NZ} (resp. \eqref{NZ}, \eqref{def:V}) from the solution $n$ of (SIM) given by Proposition~\ref{prop:existSIM} are $C^1$ in $t$ and $C^2$ in $x$, and they satisfy \eqref{NZ}, \eqref{def:V} seen as equalities between continuous functions. In particular the comparison principle (Corollary 7.4 p. 159 in \cite{Liebermann}) applies to \eqref{eq:Zmod} and \eqref{eq:V} when the terms $\frac{\nabla_x N(t,x)}{N(t,x)}$ and $\tilde n(t,x,y)$ are  considered as given coefficients (we refer to \eqref{NZ} and \eqref{def:tilden} for the definition of $N$ and $\tilde n$). The comparison principle will play a crucial role in Section~\ref{sec:macro}.

The existence of solutions to (KBM) that are $C^1$ in $t$ and $C^2$ in $x$ will be shown in Section~\ref{Appendix:uniquenessSIM} (using Proposition~\ref{prop:Linftyboundwithoutalpha}), and the uniqueness of such solutions is proven in Proposition~\ref{prop:uniqNZ}.
\end{rem}

\medskip

\rb{We prove Proposition~\ref{prop:existSIM} in Section~\ref{sec:existence}. A preliminary step is to prove the existence of global solutions $n_R$ to the following truncated problem for $R>0$: 
\begin{align}
&\partial_t n_R(t,x,y)-\Delta_x n_R(t,x,y)=\left(1+\frac A2-\frac 12(y-y_{opt}(t,x))^2-\int_{\mathbb R} n_R(t,x,z)\,dz\right)n_R(t,x,y)\nonumber\\
&\quad +\gamma\left(1_{|y|\leq R}\iint_{\mathbb R^2} \Gamma_{A/2}\left(y-\frac{y_*+y_*'}2\right)\frac{n_R(t,x,y_*)n_R(t,x,y_*')}{\int_{\mathbb R} n_R(t,x,z)\,dz}\,dy_*\,dy_*'-n_R(t,x,y)\right),\label{eq:nRsol} 
\end{align}
together with $n_R(0,x,y)=n^0(x,y)1_{|y|\leq R}$. We do so in Section~\ref{subsec:existencetruncated}: in Lemma~\ref{lem:exist}, we use a Cauchy-Lipschitz-type construction to construct solutions of \eqref{eq:nRsol} on a short time interval. We extend these to prove the existence of global solutions of \eqref{eq:nRsol} in Proposition~\ref{prop:existencenR}, and we obtain uniform estimates on the tails of solutions in the phenotypic variable $y$ in Proposition~\ref{prop:tailsnR}. In Section~\ref{sec:existence}, we show that $n_R$ converges weakly to a solution $n$ of (SIM) when $R\to\infty$, which proves Proposition~\ref{prop:existSIM}. }Note that to show the convergence of the non-linear birth term, we use the Dunford-Pettis Theorem: since the functions $n_R$ are uniformly bounded, their weak convergence implies their strong convergence. This idea is reminiscent of weak compactness arguments in kinetic theory \cite{VillaniReview}, where the $L^\infty$ bound on solutions of the truncated problem is replaced by estimates on the entropy of solutions and the De La Vallée-Poussin equi-integrability Criterion.
\medskip

In the next proposition, we show that solutions of (SIM) are unique:

\begin{prop}\label{prop:uniqueness}
Let $y_{opt}$,  $n^0$ satisfying Assumption~\ref{Assumption3}, $A>0$, and $\gamma>0$. Let $n,\tilde n$ solutions of (SIM), in the sense of definition~\ref{def:sol}, defined on $[0,T]\times\mathbb T^d\times\mathbb R$ with the same initial data $n^0\in L^1(\mathbb T^d\times \mathbb R,\mathbb R_+)$, such that
\[\max_{x\in\mathbb T^d} \int(1+y^4)n^0(x,y)\,dy<\infty.\]
If there is a constant $C>0$ such that
\begin{equation}\label{eq:assmoments}
\forall (t,x)\in [0,T]\times\mathbb T^d,\quad \int_{\mathbb R} n(t,x,y)\left(1+y^4\right)\,dy\leq C,\quad \int_{\mathbb R} \tilde n(t,x,y)\left(1+y^4\right)\,dy\leq C,
\end{equation}
then $n=\tilde n$. 
\end{prop}

\begin{rem}\label{rem:tailsn0}
The assumption~\eqref{eq:assmoments} on moments of $n$ and $\tilde n$ could be replaced by the assumption $n^0(x,y)\leq \frac C{1+y^6}$. Indeed, we can use an argument introduced in the proof of Proposition~\ref{prop:existSIM} (see Step~1) to show that this bound is propagated for $t\in[0,T]$, and \eqref{eq:assmoments} is then always satisfied by $n$ and $\tilde n$.
\end{rem}

The proof of Proposition~\ref{prop:uniqueness} is detailed in Section~\ref{Appendix:uniquenessSIM}. It follows an argument developed by J. Guerand, M. Hillairet and S. Mirrahimi in \cite{Guerand} (see section~A.3) for a model without spatial structure. We adapt their proof to include the spatial variable $x$ in (SIM)  with minimal adjustments.

}

\subsection{Macroscopic limits of solutions of (SIM)}

Our main asymptotic result, stated below, shows that when $\gamma>0$ is large, $n$ satisfies:
\[n(t,x,y)\underset{\gamma\gg 1}{\sim}N(t,x)\Gamma_A(y-Z(t,x)),\]
and the couple $(N,Z)$, given by \eqref{NZ}, is close to the solution of (KBM)  with initial data 
\[\Big (N(0,\cdot),Z(0,\cdot)\Big)=\left(\ra{N^0},\ra{Z^0}\right).\]

\begin{theo}\label{Thm:macro}
Let $y_{opt},\,n^0$ satisfying Assumption~\ref{Assumption} and $A>0$. For $\gamma>0$, let  $n_\gamma\in L^\infty(\mathbb R_+\times \mathbb T^d,L^1(\mathbb R))$ the solution of (SIM) with initial data $n^0$. 

There exist  $\bar \gamma>0$, $C>0$ and $\theta\in(0,1)$ such that for any $\gamma>\bar \gamma$,
there exist $\varphi_{N,\gamma},\,\varphi_{Z,\gamma}: \mathbb R_+\times\mathbb T^d\to\mathbb R$ satisfying
\begin{equation}\label{thm:est2}
\|\varphi_{N,\gamma}(t,\cdot)\|_{L^\infty(\mathbb T^d)}+\|\varphi_{Z,\gamma}(t,\cdot)\|_{L^\infty(\mathbb T^d)}\leq \frac C{\gamma^\theta}+C \mathds 1_{\left[0,C\ln\gamma/\gamma\right]}(t)
\end{equation}
such that the functions $N_\gamma$ and $Z_\gamma$ defined from $n_\gamma$ by \eqref{NZ} satisfy the following equations as equalities between continuous functions:
\begin{equation}\label{eq:thm}
\left\{\begin{array}{l}
\partial_t N_\gamma(t,x)-\Delta_x N_\gamma(t,x)= \left[1-\frac 12(Z_\gamma(t,x)-y_{opt}(t,x))^2-N_\gamma(t,x)+\varphi_{N,\gamma}(t,x)\right]N_\gamma(t,x),\\ \\
\partial_t Z_\gamma(t,x)-\Delta_x Z_\gamma(t,x)=2\frac{\nabla_xN_\gamma\cdot\nabla_x Z_\gamma}{N_\gamma}(t,x)-A(Z_\gamma(t,x)-y_{opt}(t,x))+\varphi_{Z,\gamma}(t,x),
\end{array}\right.
\end{equation}
where $(t,x)\in\mathbb R_+\times \mathbb T^d$. Moreover,
\begin{equation}\label{thm:est1}
\max_{(t,x)\in \left[\theta\ln\gamma/\gamma,\infty\right)\times \mathbb T^d}W_2\left(\frac{n_\gamma(t,x,\cdot)}{N_\gamma(t,x)},\Gamma_{A}(\cdot-Z_\gamma(t,x))\right)\leq \frac{C}{\gamma^\theta},
\end{equation}
where $W_2$ stands for the Wasserstein distance, as defined in Section~\ref{subsec:defWasserstein}.
\end{theo}
We show in Section~\ref{Appendix:uniquenessSIM} that this theorem implies the convergence of $(N_\gamma,Z_\gamma)$ to the solution $(\bar N,\bar Z)$ of (KBM):
\begin{equation}\label{eq:cv}
\begin{array}{l}
N\underset{\gamma\to\infty}\longrightarrow\bar N\quad \textrm{ in }\quad L^\infty_{loc}((0,+\infty),L^\infty(\mathbb T^d)),\\
\\
Z\underset{\gamma\to\infty}\longrightarrow\bar Z\quad \textrm{ in }\quad L^\infty_{loc}((0,+\infty),L^\infty(\mathbb T^d)).
\end{array}
\end{equation}

\begin{rem} \label{rem:thm}
The estimates given by Theorem~\ref{Thm:macro} are global in time, even though $N(t,\cdot)$ may converge to $0$ when $t\to\infty$. This is possible because the last term of (SIM) (ie the "kinetic" operator) scales linearly with $n$. It therefore also holds for populations that are going extinct. 

\gr{Estimate \eqref{eq:thm} is a stronger result (more quantitative) than what is typically obtained in macroscopic limits, through e.g. a truncated Hilbert expansion \cite{Caflisch} or compactness arguments \cite{Bardos}. This is possible thanks to the diffusion operator in $x$ present in (SIM) that brings some regularity to solutions.}

\end{rem}

In Section~\ref{sec:existence}, we prove the existence of global solutions for (SIM).  In Section~\ref{subsec:highermoments}, we show that an $L^\infty([0,\tau]\times \mathbb T^d)$ bound on $Z$ (with $\tau\geq 0$) implies an estimate on the fourth moment of $y\mapsto n(t,x,\cdot)$ for $t\in[0,\tau+\bar \tau]$, with $\bar \tau>0$. In Section~\ref{subsec:RegularityNZ} we show that $Z$ is H\"older continuous, provided we have a bound on $\|Z\|_{L^\infty}$. This regularity is used in Section~\ref{subsec:Wasserstein} together with a Tanaka-type inequality  (see Theorem~\ref{thm:Tanaka} in the Appendix) to show that $\frac{n(t,x,\cdot)}{N(t,x)}$ is close to $\Gamma_{A}(\cdot-Z(t,x))$ for the Wasserstein distance $W_2$ when $\gamma>0$ is large enough. Finally in Section~\ref{subsec:refinedLinfty} we use the estimates mentioned above to obtain a uniform bound on $\|Z\|_{L^\infty(\mathbb R_+\times \mathbb T^d)}$\rb{, through a contradiction argument showing that the maximal time $\tau$ where this estimate hold is actually $\tau=\infty$}. This estimate  implies the macroscopic limit described in Theorem~\ref{Thm:macro}.

\subsection{Biological interpretation of the model and impact for ecology}\label{subsec:biology}


The first term on the right-hand side of (SIM), $\Delta_x n(t,x,y)$, represents the dispersion of individuals in space. The  term $\left(1+\frac A2-\frac 12(y-y_{opt}(t,x))^2\right)n(t,x,y)$ represents the effect of natural selection: the individuals whose phenotypic trait $y$ is far from the optimal trait $y_{opt}(t,x)$ have a high mortality rate. The  function $y_{opt}$ should therefore be considered as a description of the environment and is a given function. For instance the trait $y$ is could be the temperature to which an individual is best adapted to, and $y_{opt}$ is then the predicted map of temperatures. The term $-\left(\int n(t,x,z)\,dz\right)n(t,x,y)$ in  (SIM) represents competition: all individuals present at time $t\geq 0$ and location $x\in\mathbb T^d$ are competing for resources. The last term describes the effect of sexual reproductions: when parents give birth to an offspring, the phenotypic trait of the offspring is drawn from a normal distribution with a fixed variance $A/2$ centred on the mean of the parents' traits. This model for the effect of sexual reproduction on a continuous phenotypic trait is known as the \emph{Infinitesimal Model}. It was introduced by Fisher in 1919 \cite{Fisher}, and is used in population genetics either for theoretical purpose \cite{Bulmer,Turelli,Barton} or for practical purposes \cite{Martinez, Verrier}. The $\gamma\gg1$ limit corresponds to a short generation time and it can be seen as the implicit assumption behind the classical \emph{Linkage Equilibrium} assumption used in population genetics (see for instance \cite{Bulmer}): in the framework of the \emph{Infinitesimal Model} the Linkage Equilibrium assumption  implies that the population distribution $\tilde n(t,x,\cdot)$ is Gaussian with fixed variance. Numerical simulations (see \cite{Mirrahimi}) suggest that the macroscopic limit model (KBM) provides a good description of the dynamics of solutions of (SIM) even when $\gamma$ is not very large: for $\gamma=2$, the information provided by (KBM) is already relevant.

We expect (SIM) to be related to a well-chosen Individual Based Model via a large number of individuals argument, but as far as we know, no such asymptotics exists at present. This type of derivation exists for asexual models \cite{Champagnat}, but here an additional difficulty arises: describing (SIM) as a large population limit of an Individual Based Model will require a precise understanding of the link between explicit genetic models and the \emph{Infinitesimal Model} (which is at the root of the reproduction operator appearing in (SIM)). We refer to \cite{Barton} for more information on this limit.

(KBM) was introduced by Kirkpatrick and Barton in 1997 \cite{Kirkpatrick}, and is used to model the range dynamics of populations, particularly when these populations are subject to climate change, see for example \cite{Bridle,Aguilee}. The success of (KBM) comes from to the complex dynamics it presents \cite{Kirkpatrick,Mirrahimi}: even for a very simple environment described by $y_{opt}(t,x)=Bx$ (and $x\in\mathbb R$), the population can either become extinct, survive without spreading, or spread (see \cite{Kirkpatrick,Miller2}). From a mathematical point of view, these dynamics raise a number of difficult questions. Several simplified models exist (see \cite{Pease,Mirrahimi}), and we refer to \cite{Miller,Mirrahimi} for analysis of some of these simplified models.

A good understanding of the connections between (SIM) and (KBM) (and other connections with stochastic models) has practical implications: the different scales (such as the mesoscopic scale of (SIM) and the  macroscopic scale of the (KBM)) are not clearly distinct in most biological systems, and  easy navigation between the different scales of description is an essential feature of the theory. This can be seen in \cite{Aguilee}, where the macroscopic limit from (SIM) to (KBM) plays an important role. We believe that these models will play an important role in understanding the effect of climate change on species, and are a valuable complement to Species Distribution Models (see e.g. \cite{Guisan}) that currently prevail.


\subsection{Preliminary: equations satisfied by solutions of the (SIM)}\label{subsec:preliminary}
Let $n:(t,x,y)\mapsto n(t,x,y)$ a solution of (SIM). In this section we derive heuristically equations satisfied by $n$ normalized by $\int n(t,x,y)\,dy$ and other moments of $n$. The existence and regularity results stated in Section~\ref{subsec:existencemain} (see Proposition~\ref{prop:existSIM} and Remark~\ref{rem:thm}) will provide a rigorous framework for these partial differential equations.

\medskip

If we integrate (SIM) along the variable $y$, we get that the population size $N$ (see \eqref{NZ} for its definition) satisfies, for $t\geq 0$ and $x\in\mathbb T^d$,
\begin{equation}\label{eq:Nmod}
\partial_t N(t,x)-\Delta_x N(t,x)= \left[1+\frac A2-N(t,x)\right]N(t,x)-\frac 12 \int_{\mathbb R} (y-y_{opt}(t,x))^2n(t,x,y)\,dy.
\end{equation}
We define the normalized profile of the population,
\begin{equation}\label{def:tilden}
\tilde n(t,x,y)=\frac{n(t,x,y)}{N(t,x)},
\end{equation}
which satisfies
\begin{align}
&\partial_t \tilde n(t,x,y)-\Delta_x \tilde n(t,x,y)\nonumber\\
&\quad =2\frac{\nabla_x N(t,x)}{N(t,x)}\cdot \nabla_x \tilde n(t,x,y)+\gamma\left( T(\tilde n(t,x,\cdot))-\tilde n(t,x,y)\right)\nonumber\\
&\qquad+\frac 12 \tilde n(t,x,y)\left(\int_{\mathbb R} (z-y_{opt}(t,x))^2\tilde n(t,x,z)\,dz-(y-y_{opt}(t,x))^2\right),\label{eq:tildenmod}
\end{align}
where $T$, the Infinitesimal operator, \rb{is defined by
\begin{equation}\label{def:T}
T(\tilde n)(y):=\int_{\mathbb R} \Gamma_{A/2}\left(y-\frac{y_*+y_*'}2\right)\tilde n(t,y_*)\tilde n(t,y_*')\,dy_*\,dy_*'.
\end{equation}
In the Appendix (Section~\ref{subsec:infinitesimaloperator}), we detail the important properties of this operator.} From this expression, we can deduce the following equation on the mean phenotypic trait of the population $Z$ (see \eqref{NZ} for its definition):
\begin{align}
&\partial_t Z(t,x)-\Delta_x Z(t,x)\nonumber\\
&\quad =2\frac{\nabla_xN(t,x)}{N(t,x)}\cdot\nabla_x Z(t,x)-\frac 12\int_{\mathbb R} \left(y-Z(t,x)\right)(y-y_{opt}(t,x))^2 \tilde n(t,x,y)\,dy.\label{eq:Zmod}
\end{align}
We define 
\begin{equation}\label{def:V}
 V(t,x):= \int_{\mathbb R} |y|^4\tilde n(t,x,y)\,dy,
\end{equation}
and thanks to \eqref{eq:tildenmod}, we show that $V$ satisfies
\begin{align}
&\partial_t V(t,x)-\Delta_x  V(t,x)\nonumber\\
&\quad =2\frac{\nabla_x N(t,x)}{N(t,x)}\cdot\nabla_x V(t,x)+\frac 12\int_{\mathbb R} \left(V(t,x)-|y|^4\right) (y-y_{opt}(t,x))^2\tilde n(t,x,y)\,dy\nonumber\\
&\qquad+\gamma\left(\int_{\mathbb R} |y|^4T(\tilde n(t,x,\cdot))(y)\,dy-V(t,x)\right).\label{eq:V}
\end{align}

\section{Existence of solutions for (SIM)}\label{sec:existence}

In this section, we prove Proposition~\ref{prop:existSIM}. For $R>0$, we denote by $n_R$ the solution of \eqref{eq:nRsol} provided by Proposition~\ref{prop:existencenR}. 

\noindent{\textbf{Step 1: We show that the sequence $(n_R)_{R>0}$ converges to a limit $n$}}

An integration of \eqref{eq:nRsol} shows that $N_R(t,x):=\int n_R(t,x,y)\,dy$ satisfies
\begin{align}
&\partial_t N_R(t,x)-\Delta_x N_R(t,x)= \left[1+\frac A2-N_R(t,x)\right]N_R(t,x)-\frac 12 \int_{\mathbb R} (y-y_{opt}(t,x))^2n_R(t,x,y)\,dy\nonumber\\
&\quad +\gamma\left(\iint_{\mathbb R^2} \left(\int_{-R}^R\Gamma_{A/2}\left(y-\frac{y_*+y_*'}2\right)\,dy\right)\frac{n_R(t,x,y_*)n_R(t,x,y_*')}{\int_{\mathbb R} n_R(t,x,z)\,dz}\,dy_*\,dy_*'-N_R(t,x)\right).\label{eq:nR}
\end{align}
\rb{Thanks to \eqref{est:uppery8} (that also implies a bound on $N_R$), the right hand side of this equation is uniformly bounded (independently from $R>0$):
\begin{align*}
&\left|\partial_t N_R(t,x)-\Delta_x N_R(t,x)\right|\leq \left[1+\frac A2+N_R(t,x)\right]N_R(t,x)+\frac 12 \int_{\mathbb R} (1+y^2)n_R(t,x,y)\,dy\nonumber\\
&\quad +\gamma\left(2R\Gamma_{A/2}(0)\left(\int_{\mathbb R} \frac{n_R(t,x,y_*)}{\int_{\mathbb R} n_R(t,x,z)\,dz}\,dy_*\right)\left(\int_{\mathbb R}n_R(t,x,y_*')\,dy_*'\right)+N_R(t,x)\right)<C.
\end{align*}}
 Theorem 5  in Section 7.1.3 of  \cite{Evans} implies that $N_R$ is a \emph{strong} solution  (in the sense of \cite{Evans}) of this heat equation with the right hand side as a $0-$th order term: for any $T>0$, there exists a constant $C_\gamma>0$, independent from $R>0$ (but that depends on $\gamma>0$) such that 
\begin{align}\label{est:regNR}
&\textrm{sup}_{t\in[0,T]}\|N_R(t,\cdot)\|_{H^1(\mathbb T^d)}+\|N_R\|_{L^2([0,T],H^2(\mathbb T^d))}+\|\partial_t N_R\|_{L^2([0,T]\times \mathbb T^d)} \leq C_\gamma.
\end{align}
We recall that $n_R$ also satisfies a regularity estimate independent from $R>0$, see \eqref{est:regnR}.

\medskip

Thanks to \eqref{est:uppery2}, the family of measures $n_R$ is tight on $[0,T]\times\mathbb T^d\times \mathbb R$, for any bounded time interval $[0,T]$. Thanks to Prokhorov's theorem, $(n_R)_{R>1}$ converges when $R\to+\infty$, up to an extraction $(R_k)$, for the weak topology of measures, to a limit $n$. \gr{Thanks to \eqref{est:uppery2}, the sequence $(n_{R_k})_{k}$ is uniformly bounded in $L^\infty([0,T]\times\mathbb T^d\times \mathbb R)$, and thus equi-integrable. We can therefore apply the Dunford-Pettis Theorem to show that $(n_{R_k})_{k}$ converges strongly to $n$ in $L^1([0,T]\times\mathbb T^d\times \mathbb R)$, when $k\to+\infty$, up to an extraction. Note that we can use a diagonal argument to show that this convergence holds for any $T>0$. This implies in particular the convergence of $N_{R_k}$ to $N(t,x):=\int_{\mathbb R}n(t,x,y)\,dy$ in $L^1([0,T]\times\mathbb T^d)$:
\[\int_0^T\int_{\mathbb T^d} |N_{R_k}(t,x)-N(t,x)|\,dx\,dt\leq \int_0^T\int_{\mathbb T^d}\int_{\mathbb R} |n_{R_k}(t,x,y)-n(t,x,y)|\,dy\,dx\,dt.\]}

The uniformity of \eqref{est:regnR} implies that the limit $n$ of $n_{R_k}$ satisfies \eqref{est:regnSIM}, and similarly the estimate \eqref{est:regNR} on $N_{R_k}$ implies that $N$ satisfies the same estimate. Next, we prove a lower bound estimate on $n_{R_k}$. Thanks to \eqref{eq:nRsol}, for $(t,x)\in\mathbb R_+\times\mathbb T^d$ and $|y|\leq 1$,
\begin{align}
&\partial_t n_{R_k}(t,x,y)-\Delta_x n_{R_k}(t,x,y)\nonumber\\
&\quad \geq -\left(\frac 12\left(R+\|y_{opt}\|_{L^\infty(\mathbb R_+\times\mathbb T^d)}\right)^2+\|N_{R_k}\|_{L^\infty(\mathbb R_+\times\mathbb T^d)}+\gamma\right)n_{R_k}(t,x,y),\label{eq:eqsubsol}
\end{align}
and we notice that for any fixed $y\in[-1,1]$, 
\[(t,x)\mapsto \left(\min_{\mathbb R_+\times\mathbb T^d\times[-1,1]}n^0\right)e^{-\left(\frac 12\left(R+\|y_{opt}\|_{L^\infty(\mathbb R_+\times\mathbb T^d)}\right)^2+\|N_{R_k}\|_{L^\infty(\mathbb R_+\times\mathbb T^d)}+\gamma\right)t}\]
is a sub-solution of \eqref{eq:eqsubsol}, and we can use the comparison principle to show that for any $(t,x,y)\in \mathbb R_+\times\mathbb T^d\times[-1,1]$,
\begin{equation}\label{eq:lowern}
n_{R_k}(t,x,y)\geq \left(\min_{\mathbb R_+\times\mathbb T^d\times[-1,1]}n^0\right)e^{-\left(\frac 12\left(R+\|y_{opt}\|_{L^\infty(\mathbb R_+\times\mathbb T^d)}\right)^2+\|N_{R_k}\|_{L^\infty(\mathbb R_+\times\mathbb T^d)}+\gamma\right)t}.
\end{equation}
Since this estimate is uniform in $k\in\mathbb N$, the limit $n$ of $n_{R_k}$ satisfies the same estimate, which proves \eqref{eq:lowernprop}.

\medskip

\noindent{\textbf{Step 2: We show that $n$ is a solution of (SIM)}}

For $\phi\in C^0_c([0,\infty)\times\mathbb T^d\times\mathbb R)$ such that $\partial_t \phi,\nabla_x \phi,\Delta_x\phi\in C^0_c(\mathbb R_+\times\mathbb T^d\times\mathbb R)$. Since $\phi$ is compactly supported, there is $B>0$ such that $\textrm{supp }\phi\subset[0,B]\times\mathbb T^d\times [-B,B]$ for some $B>0$. Since $\partial_t\phi,\Delta_x \phi\in L^1([0,\infty)\times\mathbb T^d\times\mathbb R)$,
\begin{align}
&\int_{\mathbb R_+}\int_{\mathbb T^d}\int_{\mathbb R} \left(\partial_t\phi(t,x,y)+\Delta_x\phi(t,x,y)\right) n_{R_k}(t,x,y)\,dy\,dx\,dt\nonumber\\
&\quad -\int_{\mathbb R_+}\int_{\mathbb T^d}\int_{\mathbb R} \left(\partial_t\phi(t,x,y)+\Delta_x\phi(t,x,y)\right) n(t,x,y)\,dy\,dx\,dt\underset{k\to\infty}{\longrightarrow}0.\label{est:weaksol}
\end{align}
\gr{We can also  estimate the following quantity:
\begin{align}
&I_0:=\bigg|\int_{\mathbb R_+}\int_{\mathbb T^d}\int_{\mathbb R} \phi(t,x,y)\bigg(\left(1+\frac A2-\frac 12(y-y_{opt}(t,x))^2-\int_{\mathbb R} n_{R_k}(t,x,z)\,dz\right)n_{R_k}(t,x,y)\nonumber\\
&\qquad -\left(1+\frac A2-\frac 12(y-y_{opt}(t,x))^2-\int_{\mathbb R} n(t,x,z)\,dz\right)n(t,x,y)\bigg) n(t,x,y)\,dy\,dx\,dt\bigg|\nonumber\\
&\quad \leq C\left|\int_{\mathbb R_+}\int_{\mathbb T^d}\int_{\mathbb R}\phi(t,x,y)\left(1+\frac A2-\frac 12(y-y_{opt}(t,x))^2-N(t,x)\right)\left(n_{R_k}(t,x,y)-n(t,x,y)\right)\,dy\,dx\,dt\right|\nonumber\\
&\qquad + \left|\int_{\mathbb R_+}\int_{\mathbb T^d}\left(\int_{\mathbb R}\phi(t,x,y)n_{R_k}(t,x,y)\,dy\right)\left(N(t,x)-N_{R_k}(t,x)\right)\,dx\,dt\right|=|I_1|+|I_2|.\label{est:II1I2I3}
\end{align}
To estimate  $|I_1|$, we notice that $(t,x,y)\mapsto\phi(t,x,y)\left(1+\frac A2-\frac 12(y-y_{opt}(t,x))^2-N(t,x)\right)$ is a compactly supported bounded function that does not depend on $k\in\mathbb N$.  The convergence  of $n_{R_k}$ to $n$ in $L^1([0,T]\times\mathbb T^d\times\mathbb R)$ then implies $|I_1|\to 0$ as $k\to\infty$. To estimate $|I_2|$, we take advantage of $\textrm{supp }\phi\subset[0,B]\times\mathbb T^d\times [-B,B]$:
\begin{align*}
|I_2|&\leq \|\phi n_{R_k}\|_{L^\infty(\mathbb R_+\times\mathbb T^d, L^1(\mathbb R))}\|N-N_{R_k}\|_{L^1([0,B]\times\mathbb T^d)}\underset{k\to\infty}{\longrightarrow}0,
\end{align*}
thanks to the fact that $\phi\in C^0_c([0,\infty)\times\mathbb T^d\times\mathbb R)$, the  uniform bound \eqref{est:uppery8} on $n_{R_k}$, and the convergence of $N_{R_k}$ to $N$ in $L^1_{loc}(\mathbb R_+\times\mathbb T^d)$. We can therefore conclude the estimate \eqref{est:II1I2I3}:
\begin{equation}\label{est:IStep3}
I_0\underset{k\to\infty}{\longrightarrow}0.
\end{equation}
To estimate the difference between the birth terms for $n_{R_k}$ and $n$, we notice that
\begin{align}
&I_3:=\bigg|\int_{\mathbb R_+}\int_{\mathbb T^d}\int_{\mathbb R} \phi(t,x,y)\bigg[1_{|y|\leq R_k}\iint_{\mathbb R^2} \Gamma_{A/2}\left(y-\frac{y_*+y_*'}2\right)\frac{n_{R_k}(t,x,y_*)n_{R_k}(t,x,y_*')}{\int_{\mathbb R} n_{R_k}(t,x,z)\,dz}\,dy_*\,dy_*'\nonumber\\
&\quad -\iint_{\mathbb R^2} \Gamma_{A/2}\left(y-\frac{y_*+y_*'}2\right)\frac{n(t,x,y_*)n(t,x,y_*')}{\int_{\mathbb R} n(t,x,z)\,dz}\,dy_*\,dy_*'\bigg]\,dy\,dx\,dt\bigg|\nonumber\\
&\quad = \bigg|\int_{\mathbb R_+}\int_{\mathbb T^d}\int_{\mathbb R} \phi(t,x,y)\iint_{\mathbb R^2} \Gamma_{A/2}\left(y-\frac{y_*+y_*'}2\right)\bigg[\left(1_{|y|\leq R_k}-1\right)\frac{n_{R_k}(t,x,y_*)n_{R_k}(t,x,y_*')}{\int_{\mathbb R} n_{R_k}(t,x,z)\,dz}\nonumber\\
&\qquad +\frac{\left(n_{R_k}(t,x,y_*)-n(t,x,y_*)\right)n_{R_k}(t,x,y_*')}{\int_{\mathbb R} n_{R_k}(t,x,z)\,dz}\nonumber\\
&\qquad +\frac{n(t,x,y_*)n_{R_k}(t,x,y_*')}{\left(\int_{\mathbb R} n(t,x,z)\,dz\right)\left(\int_{\mathbb R} n_{R_k}(t,x,z)\,dz\right)}\left(N(t,x)-N_{R_k}(t,x)\right)\,dy_*\,dy_*'\nonumber\\
&\qquad +\frac{n(t,x,y_*)\left(n_{R_k}(t,x,y_*')-n(t,x,y_*')\right)}{\int_{\mathbb R} n(t,x,z)\,dz}\bigg]\,dy_*\,dy_*'\,dy\,dx\,dt\bigg|\leq |I_4+I_5+I_6+I_7|.\label{est:I4I7}
\end{align}
The test function $\phi$ is compactly supported and then $\phi(t,x,y)\left(1_{|y|\leq R_k}-1\right)\equiv 0$ if $k\in\mathbb N$ is large enough, which implies $I_4=0$. To estimate $I_6$, we notice that
\begin{align}
|I_6|&=\bigg|\int_{\mathbb R_+}\int_{\mathbb T^d}\bigg(\int_{\mathbb R} \phi(t,x,y)\frac{\iint_{\mathbb R^2} \Gamma_{A/2}\left(y-\frac{y_*+y_*'}2\right) n(t,x,y_*)n_{R_k}(t,x,y_*')\,dy_*\,dy_*'}{N(t,x)N_{R_k}(t,x)}\,dy\bigg)\nonumber\\
&\qquad \left(N(t,x)-N_{R_k}(t,x)\right)\,dx\,dt\bigg|\leq \|N-N_{R_k}\|_{L^1([0,B]\times\mathbb T^d)}\nonumber\\
&\qquad \left\|(t,x,y)\mapsto \phi(t,x,y)\frac{\iint_{\mathbb R^2} \Gamma_{A/2}\left(y-\frac{y_*+y_*'}2\right) n(t,x,y_*)n_{R_k}(t,x,y_*')\,dy_*\,dy_*'}{N(t,x)N_{R_k}(t,x)}\,dy\right\|_{L^1([0,B]\times\mathbb T^d,L^1([-B,B]))}\nonumber\\
&\leq \sqrt 2B\|\phi\|_{L^\infty(\mathbb R_+\times\mathbb T^d\times\mathbb R)}\Gamma_{A/2}(0)\|N-N_{R_k}\|_{L^2([0,B]\times\mathbb T^d)}\underset{k\to\infty}{\longrightarrow} 0,\label{est:I6}
\end{align}
where we have used the fact that $\frac{\int_{\mathbb R}n(t,x,y_*)\,dy_*}{N(t,x)}=\frac{\int_{\mathbb R}n_{R_k}(t,x,y_*')\,dy_*'}{N_{R_k}(t,x)}=1$ and the convergence of $N_{R_k}$ to $N$ in $L^1_{loc}(\mathbb R_+\times\mathbb T^d)$. To estimate $I_5$, we notice that
\begin{align}
|I_5|&= \bigg|\int_{\mathbb R_+}\int_{\mathbb T^d}\int_{-B}^B\int_{\mathbb R} \phi(t,x,y)\frac{n_{R_k}(t,x,y_*')}{N_{R_k}(t,x)}\nonumber\\
&\qquad \left(\int_{\mathbb R}\Gamma_{A/2}\left(y-\frac{y_*+y_*'}2\right)\left(n_{R_k}(t,x,y_*)-n(t,x,y_*)\right)\,dy_*\right)\,dy_*'\,dy\,dx\,dt\bigg|\nonumber\\
&\leq \left\|(t,x,y_*')\mapsto \left(\int_{-B}^B\phi(t,x,y)\,dy\right)\frac{n_{R_k}(t,x,y_*')}{N_{R_k}(t,x)}\right\|_{L^\infty([0,B]\times\mathbb T^d\times\mathbb R)}\nonumber\\
&\qquad\Gamma_{A/2}(0)\|n_{R_k}-n\|_{L^1([0,B]\times\mathbb T^d\times \mathbb R)}\underset{k\to\infty}{\longrightarrow} 0,\label{est:I5}
\end{align}
where the first factor on the right hand side of that inequality is bounded  thanks to \eqref{est:uppery2} and \eqref{eq:lowern}, while the last one converges to $0$. Finally, the argument above (to estimate $I_5$) can be reproduced to show that $I_7\to 0$ when $k\to\infty$.

\medskip

These estimates on $I_3$ (see \eqref{est:I4I7}, as well as \eqref{est:II1I2I3}, \eqref{est:IStep3} and \eqref{est:weaksol} show that for any $\phi\in C^0_c(\mathbb R_+\times\mathbb T^d\times\mathbb R)$,
\begin{align*}
0&=\lim_{k\to\infty}\int_{\mathbb R_+}\int_{\mathbb T^d}\int_{\mathbb R} \left(\partial_t\phi(t,x,y)+\Delta_x\phi(t,x,y)\right) n_{R_k}(t,x,y)\,dy\,dx\,dt\\
&\quad +\int_{\mathbb R_+}\int_{\mathbb T^d}\int_{\mathbb R} \phi(t,x,y)\bigg[\left(1+\frac A2-\frac 12(y-y_{opt}(t,x))^2-\int_{\mathbb R} n_{R_k}(t,x,z)\,dz\right)n_{R_k}(t,x,y)\\
&\quad +\gamma\left(1_{|y|\leq R}\iint_{\mathbb R^2} \Gamma_{A/2}\left(y-\frac{y_*+y_*'}2\right)\frac{n_{R_k}(t,x,y_*)n_{R_k}(t,x,y_*')}{\int_{\mathbb R} n_{R_k}(t,x,z)\,dz}\,dy_*\,dy_*'-n_{R_k}(t,x,y)\right)\bigg]\,dy\,dx\,dt\\
&=\int_{\mathbb R_+}\int_{\mathbb T^d}\int_{\mathbb R} \left(\partial_t\phi(t,x,y)+\Delta_x\phi(t,x,y)\right) n(t,x,y)\,dy\,dx\,dt\\
&\quad +\int_{\mathbb R_+}\int_{\mathbb T^d}\int_{\mathbb R} \phi(t,x,y)\bigg[\left(1+\frac A2-\frac 12(y-y_{opt}(t,x))^2-\int_{\mathbb R} n(t,x,z)\,dz\right)n(t,x,y)\\
&\quad +\gamma\left(1_{|y|\leq R}\iint_{\mathbb R^2} \Gamma_{A/2}\left(y-\frac{y_*+y_*'}2\right)\frac{n(t,x,y_*)n(t,x,y_*')}{\int_{\mathbb R} n(t,x,z)\,dz}\,dy_*\,dy_*'-n(t,x,y)\right)\bigg]\,dy\,dx\,dt.
\end{align*}
This equality and the regularity estimate \eqref{est:regnSIM} imply that $n$ is a solution of (SIM) in the sense of Definition~\ref{def:sol}. Estimate \eqref{est:uppery6} is a consequence of \eqref{est:uppery8} and the convergence of $n_{R_k}$ to $n$ in $L^1([0,T]\times \mathbb T^d\times \mathbb R)$, for any $T>0$.

}

\section{Macroscopic limits of solutions of (SIM)}\label{sec:macro}

\subsection{Tail estimates uniform in $\gamma>0$ for solutions of (SIM)}\label{subsec:highermoments}

In this section, we show that a bound on $\|Z\|_{L^\infty([0,\tau)\times \rb{\mathbb T^d})}$ implies a bound on $\|V\|_{L^\infty([0,\tau)\times \mathbb T^d)}$. \gr{This is the beginning of a bootstrap argument that will unfold in Proposition~4.5: we assume $\|Z\|_{L^\infty([0,\tau)\times \mathbb T^d)}\leq \kappa$ and will show that it implies a stronger estimate on $\|Z\|_{L^\infty([0,\tau)\times \mathbb T^d)}$. This bootstrap will imply a uniform estimate on $\|Z\|_{L^\infty([0,\tau)\times \rb{\mathbb T^d})}$, validating all the estimates obtained in Proposition 4.1, 4.3, and 4.4.} 

\begin{prop}\label{prop:moment4}
Let $\alpha>0$, $A>0$ and $\kappa>0$. There exist $\bar \gamma>0$, $C_\kappa>0$ and $\bar\tau_\kappa>0$ such that if $y_{opt},\,n^0$ satisfies Assumption~\ref{Assumption} and $\gamma>\bar \gamma$, then the following statement holds.

If the solution $n\in L^\infty(\mathbb R_+\times \mathbb T^d,L^1(\mathbb R))$ of (SIM) with initial condition $n^0$ and if it  satisfies $\|Z\|_{L^\infty([0,\tau)\times \mathbb T^d)}\leq \kappa$  for some $\tau\in[0,+\infty]$, then
\[\forall (t,x)\in[0,\tau+\bar\tau_\kappa)\times \mathbb T^d,\quad \int_{\mathbb R} |y|^4\frac{n(t,x,y)}{\int_{\mathbb R} n(t,x,z)\,dz}\,dy\leq C_\kappa.\]
\end{prop}
\begin{rem}\label{est:moment4T}
\ra{Note that the estimate proven in Proposition~\ref{prop:moment4} is uniform in $\gamma>0$ and is therefore not implied by \eqref{est:uppery6}. To obtain it, it is necessary to assume that this quantity is finite at $t=0$, but this is implied by the assumption $n^0(x,y)\leq \frac {C_0}{1+y^{10}}$ made in Assumption~\ref{Assumption}.}

Under the assumptions of the proposition above, \eqref{est:0T} and Proposition~\ref{prop:moment4} imply the following estimate, that will be useful on several occasions in the manuscript:
\begin{equation*}
\int_{\mathbb R} |y|^4 T(\tilde n(t,x,\cdot))(y)\,dy\leq C_\kappa
\end{equation*}

\end{rem}

\begin{proof}[Proof of Proposition~\ref{prop:moment4}]
The dynamics of $V$ is given by \eqref{eq:V}, and to estimate the last term of that equation, \ra{we take advantage of the fact that $T(\Gamma_{A}(\cdot-Z))=\Gamma_{A}(\cdot-Z)$} (see \eqref{eq:Maxwellian2A}), and Corollary~\ref{cor:Tanaka4}: for $(t,x)\in\mathbb R_+\times \mathbb T^d$,
\begin{align}
&\int_{\mathbb R} |y|^4 T(\tilde n(t,x,\cdot))(y)\,dy=W_4(T(\tilde n(t,x,\cdot)),\delta_0)^4\nonumber\\
&\quad \leq\left[W_4\left(T(\tilde n(t,x,\cdot)),T(\Gamma_{A}(Z(t,x)-\cdot))\right)+W_4\left(\Gamma_{A}(Z(t,x)-\cdot),\delta_0\right)\right]^4\nonumber\\
&\quad \leq \left[\frac 1{2^{1/4}} W_4(\tilde n(t,x,\cdot),\Gamma_{A}(Z(t,x)-\cdot))+W_4\left(\Gamma_{A}(Z(t,x)-\cdot),\delta_0\right)\right]^4\nonumber\\
&\quad \leq \left(\frac 1{2^{1/4}} W_4(\tilde n(t,x,\cdot),\delta_0)+2 W_4(\delta_0,\Gamma_{A}(Z(t,x)-\cdot))\right)^4\leq \left(\frac 1{2^{1/4}} W_4(\tilde n(t,x,\cdot),\delta_0)+2 Z(t,x)+C\right)^4\nonumber\\
&\quad \leq \frac 23 W_4^4(\tilde n(t,x,\cdot),\delta_0)+C\left(Z(t,x)^4+1\right),\label{est:0T}
\end{align}
for some constant $C>0$, thanks to a Young inequality. The last term of \eqref{eq:V} then satisfies
\begin{align*}
&\gamma\left(\int_{\mathbb R} |y|^4T(\tilde n(t,x,\cdot))(y)\,dy-V(t,x)\right) \leq \gamma\left(C\left(|Z(t,x)|^4+1\right)-\frac 13 V(t,x)\right).
\end{align*}
To estimate the second term on the right hand side of \eqref{eq:V}, we use a Cauchy-Schwarz inequality as follows
\begin{align*}
&\int_{\mathbb R} \left(V(t,x)-|y|^4\right) (y-y_{opt}(t,x))^2\tilde n(t,x,y)\,dy\leq V(t,x)\int_{\mathbb R} (y-y_{opt}(t,x))^2\tilde n(t,x,y)\,dy\\
&\quad\leq CV(t,x)\int \left(|y|^2+1\right)\tilde n(t,x,y)\,dy\leq C\left(1+\sqrt {V(t,x)}\right)V(t,x).
\end{align*}
Thanks to these estimates, \eqref{eq:V} becomes, for $(t,x)\in [0,\tau]\times \mathbb T^d$,
\begin{eqnarray}
\partial_t V(t,x)-\Delta_x V(t,x)&\leq& 2\frac{\nabla_x N(t,x)}{N(t,x)}\cdot\nabla_x V(t,x)+  C\left(1+\sqrt {V(t,x)}\right)V(t,x)\nonumber\\
&&+ \gamma\left(C\left(|Z(t,x)|^4+1\right)-\frac 13 V(t,x)\right).\label{est:V1}
\end{eqnarray}
Let 
\[\bar V:= \max\left(\|V(0,\cdot)\|_{L^\infty(\mathbb T^d)}, 7C\left((\kappa+1)^4+1\right)\right).\]
As soon as $\gamma\geq  C \left(1+\sqrt {\bar V}\right)$, we have 
\[C\left(1+\sqrt {\bar V}\right)\bar V+ \gamma\left(C\left((\kappa+1)^4+1\right)-\frac 13 \bar V\right)\leq 0,\]
and $\phi(t,x)\equiv \bar V$ satisfies $V(0,x)\leq \phi(0,x)$ for $x\in\mathbb T^d$, as well as
\begin{align*}
\partial_t \phi(t,x)-\Delta_x \phi(t,x)&\geq 2\frac{\nabla_x N(t,x)}{N(t,x)}\cdot\nabla_x \phi(t,x)+  C\left(1+\sqrt {\phi(t,x)}\right)\phi(t,x)\\
&\quad+\gamma\left(C\left((\kappa+1)^4+1\right)-\frac 13 \phi(t,x)\right),
\end{align*}
and $\phi$ is a super-solution of \eqref{est:0T} for $x\in\mathbb T^d$ and $t\in[0,\tau')$, where $\tau'$ is such that $\|Z\|_{L^\infty([0,\tau')\times\mathbb T^d)}\leq \kappa+1$. Note that the assumption $\|Z\|_{L^\infty([0,\tau)\times\mathbb T^d)}\leq\kappa$ implies $\kappa'\geq \kappa$. We may apply the parabolic comparison principle (see Remark~\ref{rem:comparaison}), and then, for $(t,x)\in [0,\tau')\times \mathbb T^d$,  $V(t,x)\leq \bar V$.

Thanks to \eqref{eq:Zmod}, we have
\begin{align*}
&\partial_t Z(t,x)-\Delta_x Z(t,x)-2\frac{\nabla_x N(t,x)\cdot\nabla_x Z(t,x)}{N(t,x)} \leq C\left(1+V(t,x)\right),
\end{align*}
and we define $\psi(t,x)=\kappa+C\left(1+\bar V\right)(t-\tau)$ which satisfies $Z(\tau,x)\leq \psi(\tau,x)$ for $x\in\mathbb T^d$, as well as
\begin{align*}
&\partial_t \psi(t,x)-\Delta_x \psi(t,x)-2\frac{\nabla_x N(t,x)}{N(t,x)}\cdot\nabla_x \psi(t,x) =C\left(1+\bar V\right)\geq C\left(1+V(t,x)\right),
\end{align*}
for $t\in[\tau,\tau')$. We may thus apply the parabolic comparison principle (see Remark~\ref{rem:comparaison}) to show $Z(t,x)\leq \kappa+C\left(1+\bar V\right)(t-\tau)$ for $(t,x)\in [\tau,\tau')\times \mathbb T^d$. We may then choose 
\[\tau'=\tau+\frac 1{C(1+\bar V)}=\tau+\frac 1{C\left(1+\max\left(\|V(0,\cdot)\|_{L^\infty(\mathbb T^d)}, 7C\left((\kappa+1)^4+1\right)\right)\right)}.\]

\end{proof}

\subsection{Regularity estimates uniform in $\gamma>0$ for $N$ and $Z$}\label{subsec:RegularityNZ}

\rb{In the proposition below, we prove some regularity estimates on $N$ and $Z$ that are uniform in $\gamma>0$. It is this uniformity that sets them apart from the regularity results obtained in Section~\ref{subsec:regNV}. }

\begin{prop}\label{prop:regularityNZ}
Let $\alpha>0$, $A>0$ and $\kappa>0$. There exist $\bar \gamma>0$, $\theta\in(0,1)$ and $C_\kappa>0$ such that if $y_{opt},\,n^0$ satisfies Assumption~\ref{Assumption} and $\gamma>\bar \gamma$, then the following statement holds.

\rb{Let $n\in L^\infty(\mathbb R_+\times \mathbb T^d,L^1(\mathbb R))$ the solution of (SIM) with initial condition $n^0$, and $N$ and $Z$ are defined by \eqref{NZ}. If} it  satisfies $\|Z\|_{L^\infty([0,\tau)\times \mathbb T^d)}\leq \kappa$ for some $\tau\in(0,+\infty]$, then for any 
$s,t\in[0,\tau)$ and $x,y\in \mathbb T^d$,
\[\frac {|Z(t,x)-Z(s,y)|}{(|t-s|+|x-y|)^\theta}+\frac {|N(t,x)-N(s,y)|}{(|t-s|+|x-y|)^\theta}\leq C_{\kappa}.\]
\rb{Moreover},
\begin{equation}\label{est:gradNN}
\left\|\frac{\nabla_x N}N\right\|_{L^{d+3}([0,\tau)\times \mathbb T^d)}\leq C_\kappa. 
\end{equation}
\end{prop}
\rb{Note that $C_\kappa$ in this statement is independent from $\tau$. This is a consequence of Step 3 of the proof. It relies on the application of a Harnack inequality (that was used to obtain (35)), and of (39), that is a corrollary of the Harnack inequality. This uniformity in $\tau$ is essential for to conclude the bootstrap argument described at the beginning of Section~\ref{subsec:highermoments}}

\begin{proof}[Proof of Proposition~\ref{prop:regularityNZ}] Let $\bar\gamma>0$ as in Proposition~\ref{prop:moment4}.

\textbf{Step 1: Lower bound on $N(t,x)$} \\
Since $\|Z\|_{L^\infty([0,\tau)\times \mathbb T^d)}\leq \kappa$, Proposition~\ref{prop:moment4} implies that $\int |y|^4\tilde n(t,x,y)\,dy$ is uniformly bounded on $[0,\tau)\times \mathbb T^d$, and there exists a constant $C_\kappa>0$ such that for $(t,x)\in[0,\tau)\times \mathbb T^d$,
\begin{equation}\label{est:CN}
\left|\left[1+\frac A2-N(t,x)\right]N(t,x)-\frac 12 \int_{\mathbb R} (y-y_{opt}(t,x))^2n(t,x,y)\,dy\right|\leq C_\kappa N(t,x),
\end{equation}
where we have also used the uniform bound on $N$ provided by Proposition~\ref{prop:existSIM}.  Thanks to \eqref{est:CN} and the comparison principle used in \eqref{eq:Nmod}, for $t\in[0,1]\cap [0,\tau)$,
\begin{equation}\label{est:lowerN1}
N(t,x)\geq e^{-C_\kappa t}\inf_{\mathbb T^d}N(0,\cdot)\geq C_\kappa,
\end{equation}
thanks to Assumption~\ref{Assumption}. 
Estimate \eqref{est:CN} also provides a uniform bound on the coefficients of~\eqref{eq:Nmod}, we can apply the Harnack inequality for $t\in [0,\tau)\setminus [0,1]$ (see \cite{Moser}, or Theorem~3 in \cite{Aronson}): there exists $C_{\kappa}>0$ such that for any $t\in [0,\tau)\setminus [0,1]$,
\begin{equation*}
\max_{(s,x)\in [t-3/4,t-1/2]\times \mathbb T^d} N(s,x)\leq C_{\kappa}\min_{(s,x)\in [t-1/3,t]\times\mathbb T^d}N(s,x).
\end{equation*}
Since $\partial_t N-\Delta_x N\leq \rb{(1+A/2)} N$, we may consider the super-solution 
\[(s,x)\mapsto \left(\max_{x\in \mathbb T^d} N(t-1/2,x)\right)e^{\rb{(1+A/2)}(s-(t-1/2))},\]
 and the comparison principle implies, for $t\in [0,\tau)\setminus [0,1]$,
\begin{equation}\label{est:lowerN2}
\max_{(s,x)\in [t-3/4,t]\times \mathbb T^d} N(s,x)\leq C_{\kappa}\min_{ (s,x)\in [t-1/3,t]\times \mathbb T^d} N(s,x).
\end{equation}

\noindent \textbf{Step 2: $L^{d+3}$ estimate on $\frac{\nabla_x N(t,x)}{N(t,x)}$ for $t\in[0,1]$}\\

We notice that for  $(t,x)\in(-\infty,\tau)\times\mathbb R$, $N(t,x)=\left(N(0,x)+\mathcal N(t,x)\right)1_{t\geq 0}$, where $\mathcal N$ is a solution of
\begin{equation}\label{eq:mathcalN}
\partial_t\mathcal N(t,x)-\Delta_x\mathcal N(t,x)=\mu_N(t,\pi(x))1_{t\geq 0},\quad (t,x)\in(-\infty,\tau)\times\mathbb R^d,
\end{equation}
where $\pi(x)$ is the standard projection of $x\in\mathbb R^d$ on $\mathbb T^d$, and
\[\mu_N(t,x)=\Delta_x N^0(x)+\left(1+\frac A2-\frac 12 \int_{\mathbb R} (y-y_{opt}(t,x))^2\tilde n(t,x,y)\,dy-N(t,x)\right)N(t,x).\]
Note that $\mathcal N(t,\cdot)\equiv 0$ for $t\leq 0$. Thanks to \eqref{est:CN} and Assumption~\ref{Assumption}, we have  $\|\mu_N\|_{L^\infty([0,\tau)\times \mathbb T^d)}<C_\kappa$, and we can  apply Theorem~7.22 of \cite{Liebermann} to obtain
\begin{equation}\label{est:grad}
\|\partial_{x}\mathcal N\|_{L^{d+3}([t-1/4,t]\times \mathbb T^d)}\leq C_{\kappa}\left(\|\mathcal N\|_{L^{d+3}([t-1/3,t]\times \mathbb T^d)}+1\right),
\end{equation}
for any $t\in\mathbb R$. For $t\in[0,1]$, since $N$ is uniformly bounded and thanks to the lower estimate \eqref{est:lowerN1}, we obtain
\begin{equation}\label{est:L5b}
\left\|\frac{\nabla_x N}N\right\|_{L^{d+3}([0,1]\times \mathbb T^d)}\leq C_\kappa. 
\end{equation}

\noindent \textbf{Step 3: $L^{d+3}$ estimate on $\frac{\nabla_x N(t,x)}{N(t,x)}$ for $t\in[0,\tau)\setminus[0,1]$}\\

The argument here is similar to the one developed for step~2, but on equation \eqref{eq:Nmod} instead of \eqref{eq:mathcalN}. Theorem~7.22 of \cite{Liebermann} applied to  \eqref{eq:Nmod} implies that for $t\geq 1$,
\begin{equation}\label{est:grad-bis}
\|\nabla_{x}N\|_{L^{d+3}([t-1/4,t]\times \mathbb T^d)}\leq C_{\kappa}\| N\|_{L^{d+3}([t-1/3,t]\times \mathbb T^d)},
\end{equation}
which we combine to \eqref{est:lowerN2} to obtain, for $t\geq 1$,
\begin{align}
\left\|\frac{\nabla_x N}N\right\|_{L^{d+3}(([t-1/4,t+1/4]\cap[0,\tau))\times \mathbb T^d)}&\rb{\leq \frac {\left\|\nabla_x N\right\|_{L^{d+3}(([t-1/4,t+1/4]\cap[0,\tau))\times \mathbb T^d)}}{\min_{(s,x)\in [t-1/3,t]\times\mathbb T^d}N(s,x)}}\nonumber\\
&\leq C_{\kappa}\frac{\|N\|_{L^{d+3}(([t-1/3,t])\times \mathbb T^d)}}{\|N\|_{L^\infty(([t-3/4,t])\times \mathbb T^d)}}\leq C_\kappa,\label{est:L5}
\end{align}
Since  $\|N\|_{L^{d+3}(([t-1/3,t])\times \mathbb T^d)}\leq \|N\|_{L^\infty(([t-3/4,t])\times \mathbb T^d)}$.

\medskip

\noindent \textbf{Step 4: Regularity of $N$ and $Z$}\\

Just as we have done for $\mathcal N(t,x)=N(t,x)-N^0(x)$ (see \eqref{eq:mathcalN}), we can define $\mathcal Z=\left(Z(t,x)-Z(0,x)\right)1_{t\geq 0}$, solution of
\begin{equation*}
\partial_t\mathcal Z(t,x)-\Delta_x\mathcal Z(t,x)=2\frac{\nabla_x N(t,x)}{N(t,x)}\cdot\nabla_x\mathcal Z(t,x)+\mu_Z(t,\pi(x))1_{t\geq 0},\quad (t,x)\in(-\infty,\tau)\times\mathbb R^d,
\end{equation*}
where $\|\mu_Z\|_{L^\infty([0,\tau)\times \mathbb T^d)}<C_\kappa$ thanks to Proposition~\ref{prop:moment4} and Assumption~\ref{Assumption}, and $\frac{\nabla_x N}{N}$ satisfies \eqref{est:L5b}, \eqref{est:L5}. This equation then has the structure of equation (5) in \cite{Aronson} and we can apply  satisfy Theorem~4 from that reference to obtain a H\"older estimate on  $\mathcal Z$ to prove the Hölder continuity of $\mathcal Z$. This theorem cxan also be applied to  $\mathcal N$ (since all its coefficients are bounded), which concludes the proof of the proposition.

\end{proof}

\subsection{Distance of solutions of (SIM) to local Maxwellians}\label{subsec:Wasserstein}

\begin{prop}\label{prop:Wassersteincontraction}
Let $\alpha>0$, $A>0$ and $\kappa>0$. There exist $\bar \gamma>0$, $\theta\in (0,1)$ and $C_\kappa>0$  such that if $y_{opt},\,n^0$ satisfies Assumption~\ref{Assumption} and $\gamma>\bar \gamma$, then the following statement holds.

\rb{Let $n\in L^\infty(\mathbb R_+\times \mathbb T^d,L^1(\mathbb R))$  the solution of (SIM) with initial condition $n^0$, and $Z$ defined by \eqref{NZ}. If} it satisfies $\|Z\|_{L^\infty([0,\tau)\times \mathbb T^d)}\leq \kappa$ for some $\tau\in(0,+\infty]$, then
\begin{equation}\label{est:Wass}
\forall t\in\left[C_\kappa\frac {\ln\gamma}{\gamma},\tau\right),\quad \max_{x\in \mathbb T^d}W_2^2\Big(\tilde n(t,x,\cdot),\Gamma_{A}(\cdot-Z(t,x))\Big)\leq \frac{C_{\kappa}}{\gamma^\theta},
\end{equation}
where $\tilde n$ is given by \eqref{def:tilden}\rb{ and} $\Gamma_{A}$ is defined by \eqref{def:Gamma}.
\end{prop}
\rb{Note that in this statement, the constant $C_\kappa>0$ is independent from $\gamma\geq \bar\gamma$, so that after a boundary layer $(0,C_\kappa\ln\gamma/\gamma)$, the functions $y\mapsto \tilde n(t,x,y)$ becomes close to Gaussian distributions in the trait space, for any $x\in\mathbb T^d$.}

\begin{proof}[Proof of Proposition~\ref{prop:Wassersteincontraction}]

In this proof, we use the linear problems and estimates presented in Section~\ref{subsec:tecnical} of the Appendix. \gr{In particular, we define $(t,x)\mapsto \phi_{s,z,y}(t,x)$ as the solution of}
\begin{equation}\label{eq:phi}
\left\{
\begin{array}{l}
\partial_t \phi_{s,z,y}(t,x)-\Delta_x \phi_{s,z,y}(t,x)\\
\qquad = 2\frac{\nabla_x N(t,x)}{N(t,x)}\cdot\nabla_x \phi_{s,z,y}(t,x)-\frac 12(y-y_{opt}(t,x))^2\phi_{s,z,y}(t,x), \;(t,x)\in [s,\tau)\times \mathbb T^d,\\
\phi_{s,z,y}(s,x)=\delta_z(x),\; x\in \mathbb T^d.
\end{array}
\right.
\end{equation}
\gr{This solution exists since $\frac{\nabla_x N(t,x)}{N(t,x)}$ is a continuous function (see Section~\ref{subsec:regNV}) and $y$ is a parameter here, therefore all coefficients of this linear parabolic equation are bounded and continuous. Alternatively, it is possible to build explicit solutions from a heat equation, we refer to \eqref{eq:barphi} for this argument}. For $t\geq 0$,  we can use a Duhamel formula to  write $\tilde n$ (we recall that $\tilde n$ satisfies \eqref{eq:tildenmod}) as follows
\begin{align*}
&\tilde n(t,x,y)= e^{-\gamma t}\int_{\mathbb R} \phi_{0,z,y}(t,x)\tilde n(0,z,y)\,dz\\
&\quad +\frac 12 \int_0^{t}e^{-\gamma(t-s)}\int_{\mathbb R}\phi_{s,z,y}(t,x) \tilde n(s,z,y)\left(\int_{\mathbb R} (w-y_{opt}(s,z))^2\tilde n(s,z,w)\,dw\right)\,dz\,ds\\
&\qquad+\gamma \int_0^{t}e^{-\gamma(t-s)}\int_{\mathbb R}\phi_{s,z,y}(t,x)T(\tilde n(s,z,\cdot))(y)\,dz\,ds.
\end{align*}
Since $\tilde n(t,x,\cdot)$ is a probability measure, the $y-$integral of the right hand size of the equation above sums up to one \ra{and the right hand side can be seen as a convex combinations of three probability distributions. } \rb{The convexity properties of the squared Wasserstein distance $W_2^2$, that we detail in Section~\ref{subsec:defWasserstein} in the Appendix (see \eqref{eq:convexity}), then implies:}
\begin{align}
&W_2^2\left(\tilde n(t,x,\cdot),\Gamma_{A}(\cdot-Z(t,x))\right) \leq e^{-\gamma t}\int_{\mathbb R} \left(\int_{\mathbb R}  \phi_{0,z,y}(t,x)\tilde n(0,z,y)\,dy\right)\nonumber\\
&\quad W_2^2\left(\frac{\phi_{0,z,\cdot}(t,x)\tilde n(0,z,\cdot)}{\int_{\mathbb R}  \phi_{0,z,y}(t,x)\tilde n(0,z,y)\,dy},\Gamma_{A}(\cdot-Z(t,x))\right)\,dz\nonumber\\
&\quad +\frac 12 \int_0^te^{-\gamma(t-s)}\int_{\mathbb R} \left(\int_{\mathbb R}\phi_{s,z,y}(t,x) \tilde n(s,z,y)\,dy\right)\left(\int_{\mathbb R} (w-y_{opt}(s,z))^2\tilde n(s,z,w)\,dw\right)\nonumber\\
&\qquad W_2^2\left(\frac{\phi_{s,z,\cdot}(t,x) \tilde n(s,z,\cdot)}{\int_{\mathbb R}\phi_{s,z,y}(t,x) \tilde n(s,z,y)\,dy},\Gamma_{A}(\cdot-Z(t,x))\right)\,dz\,ds\nonumber\\
&\quad+\gamma\int_0^t  e^{-\gamma(t-s)}\int_{\mathbb R} \left(\int_{\mathbb R} \phi_{s,z,y}(t,x)T(\tilde n(s,z,\cdot))(y)\,dy\right)\nonumber\\
&\quad W_2^2\left(\frac{\phi_{s,z,\cdot}(t,x)T(\tilde n(s,z,\cdot))}{\int_{\mathbb R} \phi_{s,z,y}(t,x)T(\tilde n(s,z,\cdot))(y)\,dy}, T\left(\Gamma_{A}(\cdot-Z(t,x))\right)\right)\,dz\,ds.\label{est:Was}
\end{align}
Note that we have used that $\Gamma_{A}(\cdot-Z(t,x))$ is  a fixed point for $T$ (see  \eqref{eq:Maxwellian2A}). To estimate the first two terms on the right hand side of \eqref{est:Was}, a rough estimate is sufficient: for any $(s,z)\in[0,\infty)\times \mathbb T^d$ and $(t,x)\in[s,\infty)\times \mathbb T^d$,
\begin{align}
&W_2^2\left(\frac{\phi_{s,z,\cdot}(t,x) \tilde n(s,z,\cdot)}{\int_{\mathbb R}\phi_{s,z,y}(t,x) \tilde n(s,z,y)\,dy},\Gamma_{A}(\cdot-Z(t,x))\right)\nonumber\\
&\leq \left(W_2\left(\frac{\phi_{s,z,\cdot}(t,x) \tilde n(s,z,\cdot)}{\int_{\mathbb R}\phi_{s,z,y}(t,x) \tilde n(s,z,y)\,dy},\delta_0\right)+W_2\left(\delta_0,\Gamma_{A}(\cdot-Z(t,x))\right)\right)^2\nonumber\\
&\quad \leq 2\int_{\mathbb R} |y|^2\frac{\phi_{s,z,y}(t,x) \tilde n(s,z,y)}{\int\phi_{s,z,y'}(t,x) \tilde n(s,z,y')\,dy'}\,dy+2\int_{\mathbb R} |y|^2\Gamma_{A}(y-Z(t,x))\,dy\leq C _\kappa ,\label{est:rough0}
\end{align}
where the final estimate follows from Section~\ref{subsec:tecnical} in the Appendix: if we define $R$ by \eqref{def:R} and $R'$ as in \eqref{def:rho} (note that $|R'|\leq C_\kappa$), then \eqref{est:phi1}, \eqref{est:phi2} and Proposition~\ref{prop:moment4} imply
\begin{align}
&\int_{\mathbb R} |y|^2\frac{\phi_{s,z,y}(t,x) \tilde n(s,z,y)}{\int_{\mathbb R}\phi_{s,z,y'}(t,x) \tilde n(s,z,y')\,dy'}\,dy\nonumber\\
&\quad \leq\int_{[-R',R']^c} |y|^2\frac{\left(\min_{|\tilde y|\leq R}\phi_{s,z,\tilde y}(t,x)\right)\tilde n(s,z,y)}{\int_{-R}^R\phi_{s,z,y'}(t,x) \tilde n(s,z,y')\,dy'}\,dy+(R')^2 \int_{-R'}^{R'} \frac{\phi_{s,z,y}(t,x) \tilde n(s,z,y)}{\int_{\mathbb R}\phi_{s,z,y'}(t,x) \tilde n(s,z,y')\,dy'}\,dy\nonumber\\
&\quad \leq \int_{\mathbb R} |y|^2\frac{\tilde n(s,z,y)}{1/2}\,dy+(R')^2\leq C_\kappa.\label{est:rough}
\end{align}
We repeat the estimate \eqref{est:rough0} (using additionally the estimate of Remark~\ref{est:moment4T}) to control the last term of \eqref{est:Was} for $s\leq t-\varepsilon$, for some $\varepsilon>0$ that we will define later on. We obtain then, for $s\leq t-\varepsilon$,
\begin{equation}\label{est:rough1}
W_2^2\left(\frac{\phi_{s,z,\cdot}(t,x)T(\tilde n(s,z,\cdot))}{\int_{\mathbb R} \phi_{s,z,y}(t,x)T(\tilde n(s,z,\cdot))(y)\,dy}, T\left(\Gamma_{A}(\cdot-Z(t,x))\right)\right)\leq C_\kappa.
\end{equation}
 For $s\in[t-\varepsilon,t]$, we need a more precise estimate, which we will obtain with the following coupling $\pi$. We define $\bar \phi_{s,z}(t,x)$by \eqref{eq:barphi}, and
\begin{eqnarray*}
\pi(y_1,y_2)&=&\frac{\phi_{s,z,y_1}(t,x)}{\bar \phi_{s,z}(t,x)} T(\tilde n(s,z,\cdot))(y_1)\delta_{y_1=y_2}\\
&&+\left(1-\frac{\phi_{s,z,y_1}(t,x)}{\bar \phi_{s,z}(t,x)}\right)T(\tilde n(s,z,\cdot))(y_1)\frac{\phi_{s,z,y_2}(t,x) T(\tilde n(s,z,\cdot))(y_2)}{\int_{\mathbb R}\phi_{s,z,y'}(t,x) T(\tilde n(s,z,\cdot))(y')\,dy'}.
\end{eqnarray*}
$\pi$ is then a probability measure on $\mathbb R\times \mathbb R$ (note that $\phi_{s,z,y_1}(t,x)\leq \bar \phi_{s,z}(t,x)$, thanks to \eqref{est:barphi1}), with marginals 
\[\pi|_1(y_1)=T(\tilde n(s,z,\cdot))(y_1)\quad\textrm{ and }\quad\pi|_2(y_2)=\frac{\phi_{s,z,y_2}(t,x) T(\tilde n(s,z,\cdot))(y_2)}{\int_{\mathbb R}\phi_{s,z,y'}(t,x) T(\tilde n(s,z,\cdot))(y')\,dy'}.\]
Then,
\begin{align}
&W_2^2\left(\frac{\phi_{s,z,y}(t,x) T(\tilde n(s,z,\cdot))(y)}{\int_{\mathbb R}\phi_{s,z,y'}(t,x) T(\tilde n(s,z,\cdot))(y')\,dy'},T(\tilde n(s,z,\cdot))\right)\leq \iint_{\mathbb R^2}|y_1-y_2|^2\,d\pi(y_1,y_2)\nonumber\\
&\quad\leq \iint_{\mathbb R^2}|y_1-y_2|^2\left(1-\frac{\phi_{s,z,y_1}(t,x)}{\bar \phi_{s,z}(t,x)}\right)T(\tilde n(s,z,\cdot))(y_1)\frac{\phi_{s,z,y_2}(t,x) T(\tilde n(s,z,\cdot))(y_2)}{\int_{\mathbb R}\phi_{s,z,y'}(t,x) T(\tilde n(s,z,\cdot))(y')\,dy'}\,dy_1\,dy_2\nonumber\\
&\quad \leq 2\iint_{\mathbb R^2}\left(y_1^2+y_2^2\right)\left(1-\frac{\phi_{s,z,y_1}(t,x)}{\bar \phi_{s,z}(t,x)}\right)T(\tilde n(s,z,\cdot))(y_1)\frac{\phi_{s,z,y_2}(t,x) T(\tilde n(s,z,\cdot))(y_2)}{\int_{\mathbb R}\phi_{s,z,y'}(t,x) T(\tilde n(s,z,\cdot))(y')\,dy'}\,dy_1\,dy_2\nonumber\\
&\quad \leq 2\int_{\mathbb R} y_1^2\left(1-\frac{\phi_{s,z,y_1}(t,x)}{\bar \phi_{s,z}(t,x)}\right)T(\tilde n(s,z,\cdot))(y_1)\,dy_1\nonumber\\
&\qquad +2\left(1-\frac {\int_{\mathbb R}\phi_{s,z,y'}(t,x) T(\tilde n(s,z,\cdot))(y')\,dy'}{\bar\phi_{s,z}(t,x)}\right)\int_{\mathbb R} y_2^2\frac{\phi_{s,z,y_2}(t,x) T(\tilde n(s,z,\cdot))(y_2)}{\int_{\mathbb R}\phi_{s,z,y'}(t,x) T(\tilde n(s,z,\cdot))(y')\,dy'}\,dy_2.\label{est:distW2}
\end{align}
We estimate below  the first integral term of \eqref{est:distW2}, where $s<t$. We estimate the integral by separating it into two integral terms. The first integral one can then be controlled thanks to a Chebyshev's inequality (we recall  Remark~\ref{est:moment4T}), while we use the estimate \eqref{est:barphi1}, derived in the Appendix, to estimate the second integral term:
\begin{align}
&\int_{\mathbb R} y_1^2\left(1-\frac{\phi_{s,z,y_1}(t,x)}{\bar \phi_{s,z}(t,x)}\right)T(\tilde n(s,z,\cdot))(y_1)\,dy_1\leq \int_{|y_1|\geq (t-s)^{-1/3}}y_1^2T(\tilde n(s,z,\cdot))(y_1)\,dy_1\nonumber\\
&\quad + \int_{|y_1|\leq (t-s)^{-1/3}}y_1^2\left(1-e^{-(t-s)\frac 12(y_1+\mathcal O(1))^2}\right)T(\tilde n(s,z,\cdot))(y_1)\,dy_1\nonumber\\
&\quad\leq (t-s)^{2/3}\int_{|y_1|\geq (t-s)^{-1/3}}y_1^4T(\tilde n(s,z,\cdot))(y_1)\,dy_1\nonumber\\
&\qquad +\left(1-e^{-(t-s)^{2/3}}\right)\int_{|y_1|\leq (t-s)^{-1/3}}y_1^2T(\tilde n(s,z,\cdot))(y_1)\,dy_1\nonumber\\
&\quad \leq C_\kappa (t-s)^{2/3}+C_\kappa\left(1-e^{-(t-s)^{2/3}}\right) \leq C_\kappa (t-s)^{2/3},\label{est:truc}
\end{align}
provided $t-s>0$ is small enough. We estimate the last term of \eqref{est:distW2} as follows, provided $|t-s|$ is small enough:
\begin{align}
&2\left(1-\frac {\int_{\mathbb R}\phi_{s,z,y'}(t,x) T(\tilde n(s,z,\cdot))(y')\,dy'}{\bar\phi_{s,z}(t,x)}\right)\int_{\mathbb R} y_2^2\frac{\phi_{s,z,y_2}(t,x) T(\tilde n(s,z,\cdot))(y_2)}{\int_{\mathbb R}\phi_{s,z,y'}(t,x) T(\tilde n(s,z,\cdot))(y')\,dy'}\,dy_2\nonumber\\
&\quad\leq \left(\int_{\mathbb R}\left(1-\frac{\phi_{s,z,y'}(t,x)}{\bar\phi_{s,z}(t,x)}\right) T(\tilde n(s,z,\cdot))(y')\,dy'\right) C_\kappa\leq C_\kappa (t-s)^{2/3},\label{est:trac}
\end{align}
where the first inequality is justified by \eqref{est:rough}, and the second inequality can be obtained through the argument performed in \eqref{est:truc} (with $1$ instead of $y_1^2$). Thanks to \eqref{est:truc} and \eqref{est:trac}, the estimate \eqref{est:distW2} becomes
\begin{equation*}
W_2^2\left(\frac{\phi_{s,z,y}(t,x) T(\tilde n(s,z,\cdot))(y)}{\int_{\mathbb R}\phi_{s,z,y'}(t,x) T(\tilde n(s,z,\cdot))(y')\,dy'},T(\tilde n(s,z,\cdot))\right)\leq C_\kappa (t-s)^{2/3}.
\end{equation*}
This estimate combined to the regularity estimates on $N$ and $Z$ obtained in Proposition~\ref{prop:regularityNZ} lead to
\begin{align}
&W_2\left(\frac{\phi_{s,z,\cdot}(t,x) T(\tilde n(s,z,\cdot))}{\int_{\mathbb R}\phi_{s,z,y}(t,x) \tilde n(s,z,y)\,dy},T(\Gamma_{A}(\cdot-Z(t,x)))\right)\nonumber\\
&\quad\leq W_2\left(\frac{\phi_{s,z,\cdot}(t,x) T(\tilde n(s,z,\cdot))}{\int_{\mathbb R}\phi_{s,z,y}(t,x) \tilde n(s,z,y)\,dy},T(\tilde n(s,z,\cdot))\right)\nonumber\\
&\qquad + W_2\left(T(\tilde n(s,z,\cdot)),T(\Gamma_{A}(\cdot-Z(s,z)))\right)+|Z(t,x)-Z(s,z)|\nonumber\\
&\leq W_2^2\Big(T(\tilde n(s,z,\cdot)),T(\Gamma_{A}(\cdot-Z(s,z)))\Big)+C_{\kappa}|t-s|^\theta+C_{\kappa}|x-z|^\theta,\label{est:refined}
\end{align}
for some $\theta\in (0,1)$, provided $\gamma>0$ is large enough. We are now ready to consider the original estimate \eqref{est:Was}: thanks to \eqref{est:rough0}, \eqref{est:rough1} and \eqref{est:refined}, the estimate \eqref{est:Was} implies\begin{align*}
&W_2^2\left(\tilde n(t,x,\cdot),\Gamma_{A}(\cdot-Z(t,x))\right)\leq  e^{-\gamma t}\int_{\mathbb R} \left(\int_{\mathbb R}  \phi_{0,z,y}(t,x)\tilde n(0,z,y)\,dy\right)C_\kappa\,dz \\
&\qquad +\frac {C_\kappa}2 \int_0^te^{-\gamma(t-s)}\int_{\mathbb R} \left(\int_{\mathbb R}\phi_{s,z,y}(t,x) \tilde n(s,z,y)\left(\int_{\mathbb R} (w-y_{opt}(s,z))^2\tilde n(s,z,w)\,dw\right)\,dy\right)\,dz\,ds\\
&\qquad +\gamma\int_0^{t-\varepsilon}  e^{-\gamma(t-s)}\int_{\mathbb R} \left(\int_{\mathbb R} \phi_{s,z,y}(t,x)T(\tilde n(s,z,\cdot))(y)\,dy\right) C_\kappa \,dz\,ds\\
&\qquad +\gamma\int_{t-\varepsilon}^t  e^{-\gamma(t-s)}\int_{\mathbb R} \left(\int_{\mathbb R} \phi_{s,z,y}(t,x)T(\tilde n(s,z,\cdot))(y)\,dy\right)\\
&\qquad \left(W_2^2\Big(T(\tilde n(s,z,\cdot)),T(\Gamma_{A}(\cdot-Z(s,z)))\Big)+C_{\kappa}|t-s|^\theta+C_{\kappa}|x-z|^\theta\right)\,dz\,ds.
\end{align*}
We can now use the estimate \eqref{est:barphi1} (and Proposition~\ref{prop:moment4}) to obtain
\begin{align*}
&W_2^2\left(\tilde n(t,x,\cdot),\Gamma_{A}(\cdot-Z(t,x))\right)\leq  e^{-\gamma t}\left(\int_{\mathbb R} \bar \phi_{0,z}(t,x)\,dz\right)C_\kappa \\
&\qquad +\frac 12 \int_0^te^{-\gamma(t-s)}\left(\int_{\mathbb R}\bar \phi_{s,z}(t,x)\,dz\right) C_\kappa \,ds +\gamma\int_0^{t-\varepsilon}  e^{-\gamma(t-s)}\left(\int_{\mathbb R}\bar \phi_{s,z}(t,x)\,dz\right)  C_\kappa \,dz\,ds\\
&\qquad +\gamma\int_{t-\varepsilon}^t  e^{-\gamma(t-s)}\left(\int_{\mathbb R}\bar \phi_{s,z}(t,x)\,dz\right) \max_{z\in \mathbb T^d}W_2^2\Big(T(\tilde n(s,z,\cdot)),T(\Gamma_{A}(\cdot-Z(s,z)))\Big)\,ds\\
&\qquad +\gamma\int_{t-\varepsilon}^t  e^{-\gamma(t-s)}\int_{\mathbb R} \bar \phi_{s,z}(t,x)\left(C_{\kappa}|t-s|^\theta+C_{\kappa}|x-z|^\theta\right)\,dz\,ds.
\end{align*}
Thanks to \eqref{est:barphi2}, we have $\int \bar \phi_{0,z}(t,x)\,dz=1$, while \eqref{est:barphi3} shows that $\int \bar \phi_{s,z}(t,x)|x-z|^\theta\leq C_{\kappa}|t-s|^{\frac\theta 2}$. Then,
\begin{align}
&W_2^2\Big(\tilde n(t,x,\cdot),\Gamma_{A}(\cdot-Z(t,x))\Big)\leq  C_\kappa e^{-\gamma t}+\frac{C_\kappa}{\gamma}+C_\kappa e^{-\gamma\varepsilon}+\frac{C_{\kappa}}{\gamma^{\theta/2}}\nonumber\\
&\qquad +\gamma\int_{t-\varepsilon}^t  e^{-\gamma(t-s)}\max_{z\in \mathbb T^d}W_2^2\Big(T(\tilde n(s,z,\cdot)),T(\Gamma_{A}(\cdot-Z(s,z)))\Big)\,ds,\label{est:abc}
\end{align}
where we have used the change of variable $\tilde s=\gamma(t-s)$ to show
\[\gamma\int_{t-\varepsilon}^t  e^{-\gamma(t-s)}(t-s)^{\theta/2}\,ds=\int_0^{\gamma\varepsilon}e^{-s}\left(\frac s\gamma\right)^{\theta/2}\,ds\leq C{\gamma^{\theta/2}}.\]
Since the right hand side of \eqref{est:abc} is independent of $x\in\mathbb T^d$, we can consider the maximum over that variable. If moreover we apply the Tanaka inequality (see Theorem~\ref{thm:Tanaka}), we obtain
\begin{align*}
&I(t)\leq  C_\kappa e^{-\gamma t}+\frac{C_\kappa}{\gamma}+C_\kappa e^{-\gamma\varepsilon}+\frac{C_{\kappa}}{\gamma^{\theta/2}} +\frac{\gamma}2\int_{t-\varepsilon}^t  e^{-\gamma(t-s)}I(s)\,ds,
\end{align*}
where $I(s):=\max_{x\in \mathbb T^d}W_2^2\Big(\tilde n(s,x,\cdot),\Gamma_{A}(\cdot-Z(s,x))\Big)$.
 Thanks to a Gr\"onwall inequality (see e.g. \cite{Dragomir}),
\begin{eqnarray*}
I(t)&\leq&  C_\kappa e^{-\gamma t}+\frac{C_\kappa}{\gamma}+C_\kappa e^{-\gamma\varepsilon}+\frac{C_{\kappa}}{\gamma^{\theta/2}}\\
&&+\frac{\gamma}2e^{-\gamma t}\int_{t-\varepsilon}^t\left( C_\kappa e^{-\gamma s}+\frac{C_\kappa}{\gamma}+C_\kappa e^{-\gamma\varepsilon}+\frac{C_{\kappa}}{\gamma^{\theta/2}}\right) e^{\gamma s} e^{\frac{\gamma}2(t-s)}\,ds\\
& \leq& C_\kappa e^{-\gamma t}+\frac{C_\kappa}{\gamma}+C_\kappa e^{-\gamma\varepsilon}+\frac{C_{\kappa}}{\gamma^{\theta/2}}+
\left( C_\kappa e^{-\gamma(t-\varepsilon/2)}+\frac{C_\kappa}{\gamma}+C_\kappa e^{-\gamma\varepsilon}+\frac{C_{\kappa}}{\gamma^{\theta/2}}\right).
\end{eqnarray*}
We can chose $\varepsilon:=\frac {\theta\ln\gamma}{2\gamma}$ to obtain
\[I(t)\leq C_\kappa e^{-\gamma t}\gamma^{\theta/4}+\frac{C_{\kappa}}{\gamma^{\theta/2}},\]
so that finally, for any $\gamma>0$ large enough,
\[\max_{t\in[ \theta\ln\gamma/\gamma,\tau)} I(t)\leq \frac{C_{\kappa}}{\gamma^{\theta/2}}.\]
The result follows (note that we need to define a slightly different parameter $\theta$: $\tilde \theta:=\theta /2>0$).
\end{proof}

\subsection{Macroscopic limit from (SIM) to (KBM)}\label{subsec:refinedLinfty}

Let $\alpha>0$, $A>0$ and $\kappa>0$. There exist $\bar \gamma>0$, $C_\kappa>0$, and $\theta\in (0,1)$  such that if $y_{opt},\,n^0$ satisfies Assumption~\ref{Assumption} and $\gamma>\bar \gamma$, then the following statement holds.

If $n\in L^\infty(\mathbb R_+\times \mathbb T^d,L^1(\mathbb R)$ is the solution of (SIM) with initial condition $n^0$ and if it satisfies $\|Z\|_{L^\infty([0,\tau)\times \mathbb T^d)}\leq \kappa$, then

\begin{prop}\label{prop:Linftyboundwithoutalpha}
Let $\alpha>0$, $A>0$ and $\kappa>0$. There exist $\bar \gamma>0$ such that if $y_{opt},\,n^0$ satisfies Assumption~\ref{Assumption} and $\gamma>\bar \gamma$, then the solution $n\in L^\infty(\mathbb R_+\times \mathbb T^d,L^1(\mathbb R))$ of (SIM) with initial condition $n^0$ satisfies
\begin{equation}\label{est:prop310}
\|Z\|_{L^\infty(\mathbb R_+\times \mathbb T^d)}\leq \|Z(0,\cdot)\|_{L^\infty(\mathbb T^d)}+\|y_{opt}\|_{L^\infty(\mathbb R_+\times \mathbb T^d)}+1,
\end{equation}
where $Z$ is defined by \eqref{NZ}.
\end{prop}

\begin{proof}[Proof of Proposition~\ref{prop:Linftyboundwithoutalpha}]

Let
\begin{equation}\label{def:kappa}
\kappa:=\|Z(0,\cdot)\|_{L^\infty(\mathbb T^d)}+\|y_{opt}\|_{L^\infty(\mathbb R_+\times \mathbb T^d)}+1,
\end{equation}
and 
\begin{equation}\label{def:tau}
\tau=\max\{t\geq 0;\,\|Z\|_{L^\infty([0,t]\times \mathbb T^d)}\leq\kappa\}.
\end{equation}
\rb{Note that if $\tau=+\infty$, the estimate \eqref{est:prop310} holds and the proof is completed. We can therefore consider the other case, where $\tau<\infty$. More precisely, we will use a contradiction argument: we assume that $\tau<\infty$ and prove that it is not possible.}

We have $\|Z(0,\cdot)\|_{L^\infty(\mathbb T^d)}\leq \kappa$. Thanks to Proposition~\ref{prop:moment4}, for some $\bar\tau_\kappa>0$ independent from $\gamma>\bar\gamma$,
\begin{equation}\label{est:y4fin}
\int_{\mathbb R} |y|^4\frac{n(t,x,y)}{\int_{\mathbb R} n(t,x,z)\,dz}\,dy\leq C_\kappa,
\end{equation}
for $(t,x)\in[0,\tau+\bar\tau_\kappa]\times\mathbb T^d$, as soon as $\gamma>\bar \gamma$. Our goal is to show that $\|Z(t)\|_{L^\infty(\mathbb T^d)}<\kappa$ for $t\in [0,\tau+\bar\tau]$, for some $\bar \tau$.

\medskip

If $\tau<\bar \tau_\kappa$, then we can then apply Proposition~\ref{prop:regularityNZ}, which ensures the H\"older regularity of $Z$ (uniformly for $\gamma>0$ large enough), and then, in particular,
\[\|Z(t,\cdot)\|_{L^\infty(\mathbb T^d)}\leq \left\|Z(0,\cdot)\right\|_{L^\infty(\mathbb T^d)}+Ct^\theta,\]
and then, up to a reduction of $\bar\tau_\kappa>0$ into $\bar \tau>0$,
\begin{equation}\label{est:hypini}
\|Z\|_{L^\infty([0,\tau+\bar\tau]\times\mathbb T^d)}<\|Z(0,\cdot)\|_{L^\infty(\mathbb T^d)}+\|y_{opt}\|_{L^\infty(\mathbb R_+\times \mathbb T^d)}+1=\kappa.
\end{equation}
In particular, $\|Z(t)\|_{L^\infty(\mathbb T^d)}<\kappa$ for $t\in[0,\bar \tau]$, which completes this initialisation step of this proof.

\medskip

From \eqref{eq:Zmod} we get, for $(t,x)\in[0,\tau+\bar \tau]\times\mathbb T^d$,
\begin{align}
&\partial_t Z(t,x)-\Delta_x Z(t,x)-2\frac{\nabla_xN(t,x)\cdot\nabla_x Z(t,x)}{N(t,x)}\nonumber\\
&\quad =-\frac 12\int_{\mathbb R} \left(y-Z(t,x)\right)(y-y_{opt}(t,x))^2 \Gamma_{A}(y-Z(t,x))\,dy\nonumber\\
&\qquad+\int_{\mathbb R} \left(y-Z(t,x)\right)(y-y_{opt}(t,x))^2 \left(\Gamma_{A}(y-Z(t,x))-\tilde n(t,x,y)\right)\,dy.\label{eq:estZ2}
\end{align}
The first term on the right hand side of this equation can be simplified as follows
\begin{align}
&-\frac 12\int_{\mathbb R} \left(y-Z(t,x)\right)(y-y_{opt}(t,x))^2 \Gamma_{A}(y-Z(t,x))\,dy\nonumber\\
&\quad = -\left(Z(t,x)-y_{opt}(t,x)\right)\int_{\mathbb R} |y|^2\Gamma_{A}(y)\,dy=-A\left(Z(t,x)-y_{opt}(t,x)\right),\label{est:reacZA}
\end{align}
and to estimate the last term of \eqref{eq:estZ2}, we introduce for some $R>0$ and a Lipschitz function $\phi_R:\mathbb R\mapsto[0,1]$ such that $\phi_R|_{[-R,R]}=1$, $\phi_R|_{[-R-1,R+1]}=0$ and $\|\phi_R'\|_{L^\infty(\mathbb R)}<2$. Then,
\begin{align*}
&\left|\int_{\mathbb R} \left(y-Z(t,x)\right)(y-y_{opt}(t,x))^2 \left(\Gamma_{A}(y-Z(t,x))-\tilde n(t,x,y)\right)\,dy\right|\\
&\quad\leq \left|\int_{\mathbb R} \phi_R(y)\left(y-Z(t,x)\right)(y-y_{opt}(t,x))^2 \left(\Gamma_{A}(y-Z(t,x))-\tilde n(t,x,y)\right)\,dy\right|\nonumber\\
&\qquad+\left|\int_{\mathbb R} \left(1-\phi_R(y)\right)\left(y-Z(t,x)\right)(y-y_{opt}(t,x))^2 \left(\Gamma_{A}(y-Z(t,x))-\tilde n(t,x,y)\right)\,dy\right|\nonumber\\
&\quad\leq \max_{y\in\mathbb R }\left|\frac d{dy}\left[\phi_R(y)\left(y-Z(t,x)\right)(y-y_{opt}(t,x))^2\right]\right|W_1\left(\tilde n(t,x,\cdot),\Gamma_{A}(\cdot-Z(t,x))\right)\nonumber\\
&\qquad+C_\kappa\int_{|y|\geq R} |y+\kappa|^3\tilde n(t,x,y)\,dy+C_\kappa\int_{|y|\geq R} |y+\kappa|^3\Gamma_{A}(y-Z(t,x))\,dy,
\end{align*}
where $\kappa>0$ is defined by \eqref{def:kappa} and where we have used the  Kantorovich-Rubinstein estimate (see Section~\ref{subsec:defWasserstein} in the Appendix) to obtain the first term on the right hand side of the estimate above. We use next the fact that $\phi_R$ \rb{as well as its derivative $\phi_R'$,} is supported in $[-R-1,R+1]$ and the Chebyshev's inequality to obtain
\begin{align*}
&\left|\int_{\mathbb R} \left(y-Z(t,x)\right)(y-y_{opt}(t,x))^2 \left(\Gamma_{A}(y-Z(t,x))-\tilde n(t,x,y)\right)\,dy\right|\\
&\quad\leq C(R+\kappa)^3W_2\left(\tilde n(t,x,\cdot),\Gamma_{A}(\cdot-Z(t,x))\right)+\frac{C}R\int |y|^4\tilde n(t,x,y)\,dy\nonumber\\
&\qquad+\frac{C}{R}\int_{\mathbb R} |y|^4\Gamma_{A}(y-Z(t,x))\,dy,
\end{align*}
To estimate the three terms that appear in the estimate above, we use Proposition~\ref{prop:Wassersteincontraction} and \eqref{est:y4fin} to obtain
\begin{equation}
\left|\int_{\mathbb R} \left(y-Z(t,x)\right)(y-y_{opt}(t,x))^2 \left(\Gamma_{A}(y-Z(t,x))-\tilde n(t,x,y)\right)\,dy\right|\leq \frac{C_\kappa R^3}{\gamma^\theta}+\frac {C_\kappa} R\leq \frac{C_\kappa}{\gamma^{\theta/4}},\label{est:reacZA2}
\end{equation}
for $(t,x)\in [\theta\ln\gamma/\gamma,\tau+\bar\tau]\times \mathbb T^d$, provided we chose $R=\gamma^{\theta/4}$. Note that for $\gamma>0$ large enough, $C_\kappa\frac {\ln \gamma}\gamma<k\sigma$, so that $[\bar\tau,\tau+\bar\tau]\subset[\theta\ln\gamma/\gamma,\tau+\bar\tau]$. Thanks to \eqref{est:reacZA} and \eqref{est:reacZA2}, we obtain that for $t\in\left[\theta\ln\gamma/\gamma,\tau+\bar\tau\right]$ and $\gamma\geq \bar\gamma$ (this may require to increase the value of $\bar\gamma>0$, but this new value of $\bar\gamma$ remains independent of $\tau\geq\bar\tau$),
\[\partial_t Z(t,x)-\Delta_x Z(t,x)=2\frac{\nabla_xN(t,x)\cdot\nabla_x Z(t,x)}{N(t,x)}-A(Z(t,x)-y_{opt}(t,x))+\mathcal O(1),\]
where $\left|\mathcal O(1)\right|\leq A$. This estimate combined to \eqref{est:hypini} and a comparison of $Z(t,x)$ with $\varphi(t,x)\equiv \pm\kappa$ thanks to the comparison principle (see Remark~\ref{rem:comparaison}) proves that $\|Z\|_{L^\infty([0,(k+1)\sigma]\times\mathbb T^d)}<\kappa$. This is in contradiction with \eqref{def:tau} if $\tau\neq +\infty$, which concludes the proof.
\end{proof}

We are now ready to prove Theorem~\ref{Thm:macro}:

\begin{proof}[Proof of Theorem~\ref{Thm:macro}]
Thanks to Proposition~\ref{prop:Linftyboundwithoutalpha}, there exists a solution $n\in L^\infty(\mathbb R_+\times\mathbb T^d,L^1((1+|y|^4)\,dy))$ of the SIM with initial condition $n^0$ such that 
\begin{equation}\label{est:Zinf}
\|Z\|_{L^\infty(\mathbb R_+\times \mathbb T^d)}\leq \kappa:=\|Z(0,\cdot)\|_{L^\infty(\mathbb T^d)}+\|y_{opt}\|_{L^\infty(\mathbb R_+\times \mathbb T^d)}+1.
\end{equation}
Thanks to \eqref{eq:Nmod} and \eqref{eq:Zmod}, we get the following expressions for the functions $\varphi_N$ and $\varphi_Z$ appearing in \eqref{eq:thm}:
\begin{equation}\label{def:phiN}
\varphi_N(t,x)=\left(-\frac 12\int_{\mathbb R} (y-y_{opt}(t,x))^2\tilde n(t,x,y)\,dy+\frac A2+\frac 12(Z(t,x)-y_{opt}(t,x))^2\right)N(t,x).
\end{equation}
\begin{equation}\label{def:phiZ}
\varphi_Z(t,x)= -\frac 12\int_{\mathbb R} (y-Z(t,x))(y-y_{opt}(t,x))^2\tilde n(t,x,y)\,dy+A(Z(t,x)-y_{opt}(t,x)).
\end{equation}

Thanks to \eqref{est:Zinf}, we can apply Proposition~\ref{prop:moment4} with $[0,\tau)=[0,\infty)$, and there exists a constant $C>0$ such that $\int |y|^4\tilde n(t,x,y)\,dy\leq C$, for any $(t,x)\in\mathbb R_+\times \mathbb T^d$  and $\gamma>\bar\gamma$ large enough. This combined to the boundedness of $Z$ provided by \eqref{est:Zinf} implies the existence of a constant $C>0$ such that
\[\forall t\geq 0,\quad \|\varphi_N(t,\cdot)\|_{L^\infty(\mathbb T^d)}+\|\varphi_Z(t,\cdot)\|_{L^\infty(\mathbb T^d)}\leq C.\]
To show \eqref{thm:est2}, we need to show that after an initial layer, this estimate can be improved. For $\varphi_Z$, we can use an estimate derived in the proof of Proposition~\ref{prop:Linftyboundwithoutalpha}:  \eqref{est:reacZA2} and \eqref{est:reacZA} imply
\[\forall t\geq C\frac{\ln\gamma}{\gamma},\quad \|\varphi_Z(t,\cdot)\|_{L^\infty(\mathbb T^d)}\leq \frac C{\gamma^{\theta/4}}.\]
To estimate $\|\varphi_N(t,\cdot)\|_{L^\infty(\mathbb T^d)}$, we note that
\[\frac 12\int_{\mathbb R} (y-y_{opt}(t,x))^2\Gamma_A\left(y-Z(t,x)\right)\,dy=\frac A2+\frac 12\left(Z(t,x)-y_{opt}(t,x)\right)^2,\]
and then
\[\varphi_N(t,x)=\frac {1}2\left(\int_{\mathbb R} \left(y-y_{opt}(t,x)\right)^2\left(\Gamma_A\left(y-Z(t,x)\right)-\tilde n(t,x,y)\right)\,dy\right)N(t,x).\]
We can repeat the argument developed in \eqref{est:reacZA}-\eqref{est:reacZA2} to estimate the integral term, and then, 
\[\forall t\geq C\frac{\ln\gamma}{\gamma},\quad \|\varphi_N(t,\cdot)\|_{L^\infty(\mathbb T^d)}\leq \frac C{\gamma^{\theta/4}}.\]
To conclude the proof, we notice that \eqref{thm:est1} is a consequence of Proposition~\ref{prop:Wassersteincontraction}. To obtain estimate \eqref{thm:est2}, we define a slightly different parameter $\theta$: $\tilde \theta:=\frac \theta 4>0$.
Finally, $(N,Z)=(N(t,x),Z(t,x))$, for some $\gamma>0$ large, are $C^1$ in $t$ and $C^2$ in $x$ thanks to Section~\ref{subsec:RegularityNZ}, and satisfy \eqref{eq:thm}, according to \eqref{eq:Nmod} and \eqref{eq:Zmod} and the definitions \eqref{def:phiN}, \eqref{def:phiZ} of $\varphi_N$, $\varphi_Z$.
\end{proof}

\section{Appendix}

\subsection{Wasserstein distances}\label{subsec:defWasserstein}

In this section, we review the definition of the Wasserstein distance and several useful formula. We refer to \cite{Villani} for more on this topic. Let $p\geq 1$, and $\mathcal P_p(\mathbb R)$ the set of probability measures with finite $p-$moment, that is the set of probability measures $\mu$ over $\mathbb R$ such that
\begin{equation}\label{def:finite-moment}
\int_{\mathbb R} |y|^d\,d\mu(y)<\infty.
\end{equation}
If $\pi$ is a probability measure over $\mathbb R^2$, we call marginals the probability measures $\pi|_1$ and $\pi|_2$ such that for any Borelian $A\subset\mathbb R$,
\[\pi(A\times \mathbb R)=\pi|_1(A),\quad \pi(\mathbb R \times A)=\pi|_2(A).\]
For $\tilde n,\tilde m\in\mathcal P_2(\mathbb R)$, we call transference plans the probability measures $\pi$ over $\mathbb R^2$ such that $\pi|_1=\tilde n$ and $\pi|_2=\tilde m$, and $\Pi(\tilde n,\tilde m)$ the set of such plans:
\begin{equation}\label{def:plan}
\Pi(\tilde n,\tilde m):=\left\{\pi\in\mathcal P(\mathbb R^2);\, \pi|_1=\tilde n,\,\pi|_2=\tilde m\right\}.
\end{equation}
We can now define the $p-$ Wasserstein distance between two measures $\tilde n,\tilde m\in\mathcal P_p(\mathbb R)$ as follows
\begin{equation*}
W_p(\tilde n,\tilde m)=\left( \inf_{\pi\in \Pi(\tilde n,\tilde m)}\iint_{\mathbb R^2} |y_1-y_2|^p\,d\pi(y_1,y_2)\right)^{\frac 1p}.
\end{equation*}
Note that $W_p(\tilde n,\delta_{\bar y})=\int |y-\bar y|^p\,d\tilde n(y)$, for any $\bar y\in\mathbb R$ and $\tilde n\in \mathcal P_p(\mathbb R)$. 

\medskip

For $\tilde n,\tilde m\in\mathcal P_2(\mathbb R)$ and $f\in W^{1,\infty}(\mathbb R)$, the Kantorovich-Rubinstein is the following useful estimate:
\[\left|\int_{\mathbb R} f(y)d\tilde n(y)-\int_{\mathbb R} f(y)d\tilde m(y)\right|\leq \|f'\|_{L^\infty(\mathbb R)} W_1(\tilde n,\tilde m).\]

For $\tilde n,\tilde m\in\mathcal P_p(\mathbb R)$ (with $p\geq 1$), the Kantorovich duality provides the following equality
\begin{equation}\label{eq:Kantorovich}
W_p(\tilde n,\tilde m)=\left( \sup_{(\varphi,\psi)\in F}\int_{\mathbb R} \varphi(y)\,d\tilde n(y)+\int_{\mathbb R} \psi(Y)\,d\tilde m(Y)\right)^{\frac 1p},
\end{equation}
where $F=\left\{(\varphi,\psi)\in (C^0_b(\mathbb R,\mathbb R))^2;\, \forall y,Y\in\mathbb R, \varphi(y)+\psi(Y)\leq |y-Y|^p \right\}$.

\medskip

Finally, we will also use the convexity of the squared Wasserstein distance $W_2$. Let $\tilde n_1,\tilde m\in\mathcal P_2(\mathbb R)\cap L^1(\mathbb R)$ and, $\tilde n_2\in L^\infty([0,t]\times \mathbb T^d,\mathcal P_2\left(\mathbb R)\cap L^1(\mathbb R)\right)$, for some $t>0$.
For any $\alpha\in[0,1]$ and $\beta\in L^1([0,t]\times\mathbb T^d)$ such that $\int_{[0,t]\times\mathbb T^d} \beta=1-\alpha$, we have
\begin{align}
&W_2^2\left(\alpha \tilde n_1+\int_0^t\int_{\mathbb T^d} \beta(\sigma,x)\tilde n_2(t,x,\cdot)\,dx\,d\sigma,\tilde m\right)\nonumber\\
&\quad \leq \alpha W_2^2(\tilde n_1,\tilde m)+\int_0^t\int_{\mathbb T^d} \beta(\sigma,x)W_2^2\left(\tilde n_2(\sigma,x,\cdot),\tilde m\right)\,dx\,d\sigma.\label{eq:convexity}
\end{align}

To obtain this estimate, let $(\varphi,\psi)\in F$ with $p=2$. Then,
\begin{align*}
&\int_{\mathbb R} \varphi(y) \left(\alpha \tilde n_1(y)+\int_0^t\int_{\mathbb T^d} \beta(\sigma,x)\tilde n_2(\sigma,x,y)\,dx\,d\sigma\right)\,dy+\int_{\mathbb R}\psi(Y) \tilde m(Y)\,dY\\
&\quad\leq \alpha\left(\int_{\mathbb R} \varphi(y)\tilde n_1(y)\,dy+\psi(Y)\tilde m(Y)\,dY\right)\\
&\qquad+\int_0^t\int_{\mathbb T^d} \beta(\sigma,x)\left(\int_{\mathbb R} \varphi(y)\tilde n_2(\sigma,x,y)\,dy+\int_{\mathbb R} \psi(Y)\tilde m(Y)\,dY\right)\,dx\,d\sigma\\
&\quad\leq \alpha W_2^2\left(\tilde n_1,\tilde m\right)+\int_0^t\int_{\mathbb T^d} \beta(\sigma,x)W_2^2\left(\tilde n_2(\sigma,x,\cdot),\tilde m\right)\,dx\,d\sigma,
\end{align*}
and \eqref{eq:convexity} follows thanks to \eqref{eq:Kantorovich}, if we consider the suppremum over $(\varphi,\psi)\in F$.
\subsection{\rb{Properties of the} Infinitesimal operator}
\label{subsec:infinitesimaloperator}
\rb{In \eqref{def:T}, we have defined the  Infinitesimal operator $T$. More precisely, we define this operator on the space $\mathcal P_2(\mathbb R)$ (see Section~\ref{subsec:defWasserstein}) by \eqref{def:T}.} Then, for any $\tilde n\in\mathcal P_2(\mathbb R)$,
\[\int_{\mathbb R} T(\tilde n)(y)\,dy=\int_{\mathbb R} \tilde n(y)\,dy=1,\quad  \int_{\mathbb R} y\, T(\tilde n)(y)\,dy=\int_{\mathbb R} y\, \tilde n(y)\,dy,\]
and for any $Z\in\mathbb R$,
\begin{equation}\label{eq:Maxwellian2A}
\forall y\in\mathbb R,\quad T\left(\Gamma_{A}(\cdot-Z)\right)(y)=\Gamma_{A}(y-Z).
\end{equation}
where 
\begin{equation}\label{def:Gamma}
\Gamma_{A}(y)=\frac 1{\sqrt{2\pi A}}e^{-\frac {|y|^2}{2A}}.
\end{equation}
$T$ induces a contraction for the Wasserstein distance $W_2$, which can be seen as a version of the Tanaka inequality \cite{Tanaka} (see also \cite{Bassetti,Bolley}):
\begin{theo}[A Tanaka inequality]\label{thm:Tanaka}
Let $A>0$, $\tilde n,\,\tilde m\in\mathcal P_2(\mathbb R)$ such that $\int y\tilde n(y)\,dy=\int y\tilde m(y)\,dy$, and $T$ defined by~\eqref{def:T}. Then
\[W_2(T(\tilde n),T(\tilde m))\leq \frac 1{\sqrt 2}W_2(\tilde n,\tilde m).\]
\end{theo}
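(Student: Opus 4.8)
\textbf{Proof plan for the Tanaka inequality (Theorem~\ref{thm:Tanaka}).}

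The plan is to exhibit an explicit transference plan between $T(\tilde n)$ and $T(\tilde m)$ built from an optimal plan between $\tilde n$ and $\tilde m$, then estimate its cost. First I would take an optimal transference plan $\pi\in\Pi(\tilde n,\tilde m)$ realizing $W_2^2(\tilde n,\tilde m)$, which exists by compactness of $\Pi(\tilde n,\tilde m)$ and lower semicontinuity of the cost. The operator $T$ has a probabilistic reading: if $(Y_*,Y_*')$ are independent with law $\tilde n$ and $G$ is an independent $\mathcal N(0,A/2)$ variable, then $T(\tilde n)$ is the law of $\tfrac{Y_*+Y_*'}{2}+G$. The key idea is to couple the two constructions by using $\pi$ to couple each pair of parents and by using the \emph{same} Gaussian $G$ on both sides: let $(Y_*,X_*)$ and $(Y_*',X_*')$ be two independent draws from $\pi$, and $G\sim\mathcal N(0,A/2)$ independent of everything. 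Then $U:=\tfrac{Y_*+Y_*'}{2}+G$ has law $T(\tilde n)$, $V:=\tfrac{X_*+X_*'}{2}+G$ has law $T(\tilde m)$, so the law of $(U,V)$ is an admissible transference plan.

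Next I would bound the cost of this plan. We have
\[
U-V=\frac{(Y_*-X_*)+(Y_*'-X_*')}{2},
\]
so, writing $D:=Y_*-X_*$ and $D':=Y_*'-X_*'$ (which are i.i.d. with common law the pushforward of $\pi$ under $(x,y)\mapsto y-x$),
\[
\mathbb E\big[(U-V)^2\big]=\frac14\,\mathbb E\big[(D+D')^2\big]=\frac14\Big(2\,\mathbb E[D^2]+2(\mathbb E D)^2\Big)=\frac12\,\mathbb E[D^2]+\frac12(\mathbb E D)^2,
\]
using independence of $D$ and $D'$. Now $\mathbb E[D^2]=\int|x-y|^2\,d\pi(x,y)=W_2^2(\tilde n,\tilde m)$, and $\mathbb E D=\int y\,d\tilde m(y)-\int x\,d\tilde n(x)=0$ by the equal-mean hypothesis. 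Hence $\mathbb E[(U-V)^2]=\tfrac12 W_2^2(\tilde n,\tilde m)$. Since the law of $(U,V)$ is one particular element of $\Pi(T(\tilde n),T(\tilde m))$, we get $W_2^2(T(\tilde n),T(\tilde m))\le \tfrac12 W_2^2(\tilde n,\tilde m)$, i.e. $W_2(T(\tilde n),T(\tilde m))\le \tfrac1{\sqrt2}W_2(\tilde n,\tilde m)$, as claimed.

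The only genuinely delicate point is the bookkeeping showing that the law of $(U,V)$ really has marginals $T(\tilde n)$ and $T(\tilde m)$ — this is where the definition \eqref{def:T} of $T$ as a double convolution against $\Gamma_{A/2}$ must be matched precisely with the probabilistic description, and where one checks that $\tilde n,\tilde m$ having finite second moment guarantees all the expectations above are finite and the expansion of $\mathbb E[(D+D')^2]$ is legitimate. Everything else is the elementary variance computation; the contraction factor $\tfrac1{\sqrt2}$ comes entirely from the $\tfrac14\cdot 2 = \tfrac12$ arising from averaging two independent identically distributed displacements, and the cancellation of the cross term is exactly the equal-mean assumption. One could alternatively phrase the whole argument measure-theoretically (gluing $\pi$ with itself and with $\Gamma_{A/2}$) without probabilistic language, but the coupling viewpoint makes the choice of plan transparent.
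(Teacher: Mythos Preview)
Your argument is correct and rests on exactly the same coupling and the same variance computation as the paper: draw two independent copies of an optimal plan $\pi\in\Pi(\tilde n,\tilde m)$, use a common Gaussian, expand $\tfrac14\,\mathbb E[(D+D')^2]$, and kill the cross term via the equal-mean hypothesis. The only difference is packaging: the paper works on the dual side, taking $(\varphi,\psi)$ with $\varphi(y)+\psi(Y)\le|y-Y|^2$, rewriting $\int\varphi\,T(\tilde n)+\int\psi\,T(\tilde m)$ against $\pi\otimes\pi$, bounding by the same quantity $\tfrac14\iint|(y_*-Y_*)+(y_*'-Y_*')|^2\,d\pi\,d\pi$, and then invoking Kantorovich duality to conclude. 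Your primal version is more direct and slightly more elementary since it avoids duality altogether; the paper's dual detour buys nothing extra here, though it does make the parallel with the $W_4$ corollary (same test-function manipulation with exponent $4$) a bit more mechanical.
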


\begin{proof}[Proof of the Theorem~\ref{thm:Tanaka}]
We consider $\varphi,\psi$ such that for any $y,Y\in\mathbb R$, $\varphi(y)+\psi(Y)\leq |y-Y|^2$, and $\pi\in \Pi(\tilde n,\tilde m)$. Then,
\begin{align}
&\int_{\mathbb R} \varphi(y)T(\tilde n)(y)\,dy+\int_{\mathbb R} \psi(Y)T(\tilde m)(Y)\,dY\nonumber\\
&\quad = \iiint_{\mathbb R^3} \varphi(y)\Gamma_{A/2}\left(y-\frac {y_*+y_*'}2\right)\tilde n(y_*)\tilde n(y_*')\,dy_*\,dy_*'\,dy\nonumber\\
&\qquad +\iiint_{\mathbb R^3} \psi(Y)\Gamma_{A/2}\left(Y-\frac {Y_*+Y_*'}2\right)\tilde n(Y_*)\tilde n(Y_*')\,dY_*\,dY_*'\,dY\nonumber\\
&\quad = \iiint_{\mathbb R^3} \varphi\left(y+\frac {y_*+y_*'}2\right)\Gamma_{A/2}\left(y\right)\tilde n(y_*)\tilde n(y_*')\,dy_*\,dy_*'\,dy\nonumber\\
&\qquad +\iiint_{\mathbb R^3} \psi\left(y+\frac {Y_*+Y_*'}2\right)\Gamma_{A/2}\left(Y\right)\tilde n(Y_*)\tilde n(Y_*')\,dY_*\,dY_*'\,dY\nonumber\\
&\quad= \int_{\mathbb R} \Gamma_{A/2}(y)\iiiint_{\mathbb R^4}  \varphi\left(y+\frac {y_*+y_*'}2\right)+\psi\left(y+\frac {Y_*+Y_*'}2\right)\,d\pi(y_*,Y_*)\,d\pi(y_*',Y_*')\,dy\nonumber\\
&\quad \leq \int_{\mathbb R} \Gamma_{A/2}(y)\iiiint_{\mathbb R^4}  \left|\left(y+\frac {y_*+y_*'}2\right)-\left(y+\frac {Y_*+Y_*'}2\right)\right|^2\,d\pi(y_*,Y_*)\,d\pi(y_*',Y_*')\,dy\nonumber\\
&\quad \leq \frac 14\iiiint_{\mathbb R^4}  \left|(y_*-Y_*)+(y_*'-Y_*')\right|^2\,d\pi(y_*,Y_*)\,d\pi(y_*',Y_*').\label{eq:annexe1}
\end{align}
We notice that
\[\iiiint_{\mathbb R^4}(y_*-Y_*)(y_*'-Y_*')\,d\pi(y_*,Y_*)\,d\pi(y_*',Y_*')=\left(\int_{\mathbb R} y\tilde n(y)\,dy-\int_{\mathbb R} y\tilde m(Y)\,dY\right)^2=0,\]
and then
 \begin{align*}
&\int_{\mathbb R} \varphi(y)T(\tilde n)(y)\,dy+\int_{\mathbb R} \psi(Y)T(\tilde m)(Y)\,dY\\
&\quad \leq \frac 14\iiiint_{\mathbb R4}  \left[(y_*-Y_*)^2+2(y_*-Y_*)(y_*'-Y_*')+(y_*'-Y_*')^2\right]\,d\pi(y_*,Y_*)\,d\pi(y_*',Y_*')\\
&\quad =\frac 12\iint_{\mathbb R^2} (y-Y)^2\,d\pi(y,Y).
\end{align*}
Since this inequality holds for any $\pi\in\Pi(\tilde n,\tilde m)$, we can consider the infinum of over these, to obtain, thanks to the definition of the Wasserstein distance:
\[\int_{\mathbb R} \varphi(y)T(\tilde n)(y)\,dy+\int_{\mathbb R} \psi(Y)T(\tilde m)(Y)\,dY\leq\frac 12 W_2^2(\tilde n,\tilde m). \]
We can now take the supremum of this inequality over the functions $\varphi,\psi$ satisfying $\varphi(y)+\psi(Y)\leq |y-Y|^2$ and conclude, thanks to the Kantorovich duality formula \eqref{eq:Kantorovich}.
\end{proof}

\begin{cor}[A Tanaka inequality for $W_4$]\label{cor:Tanaka4}
Let $A>0$, $\tilde n,\,\tilde m\in\mathcal P_4(\mathbb R)$ such that $\int y\tilde n(y)\,dy=\int y\tilde m(y)\,dy$, and $T$ defined by~\eqref{def:T}. Then
\[W_4(T(\tilde n),T(\tilde m))\leq \frac 1{2^{1/4}}W_4(\tilde n,\tilde m).\]
\end{cor}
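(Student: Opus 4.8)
The plan is to run the proof of Theorem~\ref{thm:Tanaka} essentially verbatim, with the exponent $2$ replaced by $4$; the only genuinely new ingredient is that the single algebraic identity used there (the vanishing of a cross term, coming from the equal-means hypothesis) has to be supplemented by one Cauchy--Schwarz inequality.

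Concretely, I would fix continuous bounded $\varphi,\psi$ with $\varphi(y)+\psi(Y)\le|y-Y|^4$ for all $y,Y\in\mathbb R$, and choose an \emph{optimal} transference plan $\pi\in\Pi(\tilde n,\tilde m)$ for $W_4$, so that $\int|y-Y|^4\,d\pi(y,Y)=W_4^4(\tilde n,\tilde m)$. Performing inside \eqref{def:T} the same translation of the Gaussian variables as in the proof of Theorem~\ref{thm:Tanaka} (and using $\int\Gamma_{A/2}=1$, so that the two occurrences of $T$ may be integrated against a common Gaussian variable), one reaches, exactly as at \eqref{eq:annexe1},
\[
\int\varphi(y)\,T(\tilde n)(y)\,dy+\int\psi(Y)\,T(\tilde m)(Y)\,dY\ \le\ \frac1{16}\int\!\!\int\big|(y_*-Y_*)+(y_*'-Y_*')\big|^4\,d\pi(y_*,Y_*)\,d\pi(y_*',Y_*').
\]
Writing $a=y_*-Y_*$ and $a'=y_*'-Y_*'$, these are i.i.d.\ under $d\pi\otimes d\pi$ with $\int a\,d\pi=\int y\,\tilde n(y)\,dy-\int y\,\tilde m(y)\,dy=0$; expanding $(a+a')^4$, the two odd cross terms disappear and one is left with $2\int a^4\,d\pi+6\big(\int a^2\,d\pi\big)^2$.

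The key estimate is then $\big(\int(y-Y)^2\,d\pi\big)^2\le\int(y-Y)^4\,d\pi=W_4^4(\tilde n,\tilde m)$ (Cauchy--Schwarz for the probability measure $\pi$), so the double integral above is bounded by $(2+6)\,W_4^4(\tilde n,\tilde m)$ and the right-hand side by $\tfrac12 W_4^4(\tilde n,\tilde m)$. Taking the supremum over admissible $(\varphi,\psi)$ and applying the Kantorovich duality \eqref{eq:Kantorovich} in its $W_4$ version gives $W_4^4(T(\tilde n),T(\tilde m))\le\tfrac12 W_4^4(\tilde n,\tilde m)$, and a fourth root yields the factor $2^{-1/4}$. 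I do not expect any real obstacle: the change of variables and the duality step are identical to Theorem~\ref{thm:Tanaka}, and the remaining work is the elementary moment bound. It is however instructive to note where the constant degrades: for $W_2$ the only cross term, $2\int a\,d\pi\int a'\,d\pi$, is exactly zero, whereas for $W_4$ the even cross term $6\big(\int a^2\,d\pi\big)^2$ survives and is merely \emph{controlled} by $\int a^4\,d\pi$ rather than killed, which is precisely why the contraction constant is $2^{-1/4}$ and not $2^{-1/2}$. (An alternative route is to write $T(\tilde n)=\Gamma_{A/2}\ast\mu_{\tilde n}$ with $\mu_{\tilde n}$ the law of $\tfrac12(X_1+X_2)$, $X_1,X_2$ i.i.d.\ $\sim\tilde n$, note that convolution with the fixed kernel $\Gamma_{A/2}$ is a $W_4$-contraction, and compare $\mu_{\tilde n}$ and $\mu_{\tilde m}$ through the product coupling $\pi\otimes\pi$; this reduces to the same inequality $\int(a+a')^4\le8\int a^4$.)
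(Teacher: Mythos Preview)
Your proposal is correct and follows essentially the same route as the paper's proof: reproduce the Kantorovich-duality computation of Theorem~\ref{thm:Tanaka} up to \eqref{eq:annexe1} with the exponent $4$, expand $(a+a')^4$, kill the odd terms via the equal-means hypothesis, and bound $(\int a^2\,d\pi)^2$ by $\int a^4\,d\pi$ (the paper calls this Jensen, you call it Cauchy--Schwarz; same inequality). Your version is in fact slightly cleaner in that you explicitly take $\pi$ to be an optimal $W_4$-plan, which the paper leaves implicit.
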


\begin{proof}[Proof of the Corollary~\ref{cor:Tanaka4}]
We can reproduce the proof of Theorem~\ref{thm:Tanaka} until \eqref{eq:annexe1}, and obtain that for any $\varphi,\psi$ satisfying $\varphi(y)+\psi(Y)\leq |y-Y|^4$ and $\pi\in \Pi(\tilde n,\tilde m)$,
\begin{align}
&\int_{\mathbb R} \varphi(y)T(\tilde n)(y)\,dy+\int_{\mathbb R} \psi(Y)T(\tilde m)(Y)\,dY\nonumber\\
&\quad \leq \frac 1{16}\iiiint_{\mathbb R^4}\int  \left|(y_*-Y_*)+(y_*'-Y_*')\right|^4\,d\pi(y_*,Y_*)\,d\pi(y_*',Y_*')\nonumber\\
&\quad=\frac 1{16}\iiiint_{\mathbb R^4}  \Big[(y_*-Y_*)^4+4(y_*-Y_*)^3(y_*'-Y_*')+6(y_*-Y_*)^2(y_*'-Y_*')^2\nonumber\\
&\qquad \phantom{dsgregszrtgsz}+4(y_*-Y_*)(y_*'-Y_*')^3+(y_*'-Y_*')^4\Big]\,d\pi(y_*,Y_*)\,d\pi(y_*',Y_*')\nonumber\\
&\quad=\frac 1{8}\left(\iint_{\mathbb R^2}  (y-Y)^4\,d\pi(y,Y)\right)+\frac 38\left(\iint_{\mathbb R^2} (y-Y)^2\,d\pi(y,Y)\right)^2\nonumber\\
&\quad\leq\frac 1{2}\left(\iint_{\mathbb R^2}  (y-Y)^4\,d\pi(y,Y)\right).\nonumber
\end{align}
The rest of the proof is similar to the proof of Theorem~\ref{thm:Tanaka}.
\end{proof}

\subsection{Existence theory for a truncated version of (SIM)}\label{subsec:existencetruncated}
\gr{
In this section, we prove the existence of global solutions to \eqref{eq:nRsol}, the truncated version of (SIM). To do so, we first construct local in time (i.e. for $t\in[0,\bar t]$, $\bar t>0$) solutions of the truncated equation \eqref{eq:nRsol}:

\begin{lem}\label{lem:exist}
Let $y_{opt}\in W^{1,\infty}(\mathbb R_+\times\mathbb T^d,\mathbb R)$, $A>0$, $\gamma\geq 1$ and  $n^0$ satisfying Assumption~\ref{Assumption3}. There is $C>0$ such that if $R>1$ and if $n^0\in C^0(\mathbb T^d\times \mathbb R,\mathbb R_+)$ satisfies $n^0(x,y)>0$ for $(x,y)\in\mathbb T^d\times [-R,R]$, then there is a unique solution $n_R=n_R(t,x,y)$ of \eqref{eq:nRsol} for $t\in[0,\bar t]$ together with the initial data $n_R(0,x,y)=n^0(x,y)1_{|y|\leq R}$, where
\[\bar t=\frac 1{C((\|n^0\|_{L^\infty(\mathbb T^d\times[-R,R])}+\gamma)R+1)}.\]
More precisely, $n_R$ satisfies the following estimate for some constant $C_R>0$ may depend on $R>0$ and $\gamma>0$:
\begin{align*}
&\sup_{y\in\mathbb R}\left(\sup_{t\in[0,\bar t]}\|n_R(t,\cdot,y)\|_{H^1(\mathbb T^d)}+\|n_R(\cdot,\cdot,y)\|_{L^2_{loc}([0,\bar t),H^2(\mathbb T^d))}+\|\partial_t n_R(\cdot,\cdot,y)\|_{L^2_{loc}([0,\bar t)\times \mathbb T^d)}\right) \leq C_{R,\gamma}.
\end{align*}
\end{lem}

%
}\gr{

\begin{proof}[Proof of Lemma~\ref{lem:exist}]

\noindent{\textbf{Step 1: Definition of the  set $\mathcal F_R$ and the application $F_{R,\bar t}$}}

For $0<\gr{\bar t}<\frac 1{3\gamma(1+A/2)}$, let
\begin{align}
\mathcal F_{R,\bar t}&:=\bigg\{m\in L^\infty([0,\gr{\bar t}]\times\mathbb T^d\times\mathbb R);\, m\geq 0,\, m(t,x,y)=0\textrm{ if }|y|\geq R,\nonumber\\
&\qquad \|m\|_{L^\infty([0,\gr{\bar t}]\times \mathbb T^d\times\mathbb R)}\leq 3\|n^0\|_{L^\infty(\mathbb T^d\times[-R,R])}\bigg\}.\label{eq:FR}
\end{align}

We introduce the operator $F_{R,\bar t}$, that is defined by $F_{R,\bar t}(m)=n$ for $m\in \mathcal F_{R,\bar t}$ and where $n$ is the solution of 
\begin{align}
&\partial_t n(t,x,y)=\Delta_x n(t,x,y)+\left(1+\frac A2-\frac 12(y-y_{opt}(t,x))^2-\int_{\mathbb R} m(t,x,z)\,dz\right)n(t,x,y)\nonumber\\
&\quad +\gamma\left(1_{|y|\leq R}\iint_{\mathbb R^2} \Gamma_{A/2}\left(y-\frac{y_*+y_*'}2\right)\frac{m(t,x,y_*)m(t,x,y_*')}{\int_{\mathbb R} m(t,x,z)\,dz}\,dy_*\,dy_*'-n(t,x,y)\right),\label{eq:nk} 
\end{align}
together with $n(0,x,y)=n^0(x,y)\,1_{|y|\leq R}$ for $(x,y)\in\mathbb T^d\times\mathbb R$, so that $n(t,x,y)=0$ if $|y|>R$.\rb{Note that a different application could probably be used}. Notice that in \eqref{eq:nk}, $y$ can be seen as a simple coefficient, and the equation can be solved independently for each $y\in\mathbb R$. We also notice that for any fixed $y\in[-R,R]$, the coefficients of the parabolic equation \eqref{eq:nk} are bounded when $m\in \mathcal F_{R,\bar t}$ (we recall that $\gamma>0$ is here a fixed constant), so that a non-negative weak solution $\left((t,x)\mapsto n(t,x,y)\right)\in L^2([0,T]_+,H^1(\mathbb T^d))$ (such that $\left((t,x)\mapsto \partial_t n(t,x,y)\right)\in L^2([0,T],H^{-1}(\mathbb T^d))$) exists thanks to standard arguments (see Theorem 3  in Section 7.1.2 of  \cite{Evans}). The bounded coefficients actually imply that this is a \emph{strong} solution (in the sense of \cite{Evans}), thanks to Theorem 5  in Section 7.1.3 of \cite{Evans} applied for each $y\in\mathbb R$. More precisely,
\begin{align}\label{est:regn}
&\textrm{sup}_{t\in[0,\gr{\bar t}]}\|n(t,\cdot,y)\|_{H^1(\mathbb T^d)}+\|n(\cdot,\cdot,y)\|_{L^2([0,\gr{\bar t}],H^2(\mathbb T^d))}+\|\partial_t n(\cdot,\cdot,y)\|_{L^2([0,\gr{\bar t}]\times \mathbb T^d)} \leq C_R,
\end{align}
where the constant $C_R$ is related to a bound on the coefficient $1_{|y|\leq R}\frac 12(y-y_{opt}(t,x))^2$, and it therefore depends on $R>0$ in this estimate.

\medskip

\noindent{\textbf{Step 2: The set $\mathcal F_{R,\bar t}$ is stable under $F_{R,\bar t}$ provided $\bar t\leq 1/C$}}

 We notice that $n$ satisfies
\ra{\begin{equation}\label{eq:supersoln}
\partial_t n(t,x,y) -\Delta_x n(t,x,y) \leq \left(1+\frac A2\right) n(t,x,y)+\gamma \Gamma_{A/2}(0)\|m\|_{L^\infty([0,\bar t]\times\mathbb T^d,L^1([-R,R]))}.
\end{equation}
Let 
\begin{align*}
\phi(t,x,y)&:= 1_{|y|\leq R}\bigg[\left(\textrm{sup}_{\mathbb T^d\times[-R,R]} n^0\right)e^{\left(1+\frac A2\right)t}\\
&\quad +\left(\|m\|_{L^\infty([0,\bar t]\times\mathbb T^d,L^1([-R,R]))}\right)\gamma \Gamma_{A/2}(0)\frac{e^{\left(1+\frac A2\right)t}-1}{1+A/2}\bigg],
\end{align*}
that is a super-solution of \eqref{eq:supersoln}. Since additionally $\phi(0,x,y)\geq n^0(x,y)1_{|y|\leq R}$ for $(x,y)\in\mathbb T^d\times\mathbb R$, we can use the maximum principle (see Corollary 7.4 p. 159 in \cite{Liebermann}) to \gr{compare} $(t,x)\mapsto n(t,x,y)$ and $(t,x)\mapsto \phi(t,x,y)$, for any $y\in[-R,R]$. We then show that for $(t,x,y)\in[0,\gr{\bar t}]\times\mathbb T^d\times \mathbb R$,
\begin{align}
n(t,x,y)\leq  \phi(t,x,y) \leq \left(1_{|y|\leq R}e^{\left(1+\frac A2\right)t}+3\gamma \Gamma_{A/2}(0)\left(e^{\left(1+\frac A2\right)t}-1\right)\right)\|n^0\|_{L^\infty(\mathbb T^d\times[-R,R])},\label{eq:esti1}
\end{align}}
\gr{where we have used the estimate on $\|m\|_{L^\infty([0,\gr{\bar t}]\times \mathbb T^d,L^1(\mathbb R))}$ provided by the definition of $\mathcal F_{R,\bar t}$.} If 
\begin{equation}\label{eq:est:bart}
\bar t\leq  \frac{\ln \left(1+\min(1/(3\gamma \Gamma_{A/2}(0)),1)\right)}{1+A/2},
\end{equation}
then $\left(1_{|y|\leq R}e^{\left(1+\frac A2\right)t}+3\gamma \Gamma_{A/2}(0)\left(e^{\left(1+\frac A2\right)t}-1\right)\right)\leq 3$, which implies
\begin{align}
\|n\|_{L^\infty([0,\bar t]\times\mathbb T^d\times[-R,R])}&\leq 3\|n^0\|_{L^\infty(\mathbb T^d\times[-R,R])}.\label{eq:estnlinfty}
\end{align} 
We have proven that for any $m\in\mathcal F_{R,\bar t}$, we have $F_{R,\bar t}(m)=n\in \mathcal F_{R,\bar t}$ provided \eqref{eq:est:bart} is satisfied.

\medskip

\noindent{\textbf{Step 3: $F_{R,\bar t}$ is a contraction for the norm $\|\cdot\|_{L^\infty([0,T]\times\mathbb T^d\times\mathbb R)}$ on $\mathcal F_{R,\bar t}$}}

Let $m,\,\tilde m\in\mathcal F_{R,\bar t}$ (see \eqref{eq:FR}), and $n:=F_{R,\bar t}(m)$, $\tilde n:=F_{R,\bar t}(\tilde m)$. Then $(n-\tilde n)(0,x,y)=0$ for $(x,y)\in\mathbb T^d\times\mathbb R$, $(n-\tilde n)(t,x,y)=0$ for $(t,x)\in[0,\gr{\bar t}]\times \mathbb T^d$ and $y\notin[-R,R]$. We therefore consider $(t,x)\in[0,\gr{\bar t}]\times \mathbb T^d$ and $y\in[-R,R]$ from now on. We can estimate the difference between birth terms as follows:
\begin{align*}
&\left|\iint_{\mathbb R^2} \Gamma_{A/2}\left(y-\frac{y_*+y_*'}2\right)\left[\frac{m(t,x,y_*)m(t,x,y_*')}{\int_{\mathbb R} m(t,x,z)\,dz}-\frac{\tilde m(t,x,y_*)\tilde m(t,x,y_*')}{\int_{\mathbb R} \tilde m(t,x,z)\,dz}\right]\,dy_*\,dy_*'\right|\\
&\quad =\bigg|\iint_{\mathbb R^2} \Gamma_{A/2}\left(y-\frac{y_*+y_*'}2\right)\bigg[\frac{\left(m(t,x,y_*)-\tilde m(t,x,y_*)\right)m(t,x,y_*')}{\int_{\mathbb R}  m(t,x,z)\,dz}\\
&\qquad +\frac{\tilde m(t,x,y_*)m(t,x,y_*')}{\left(\int_{\mathbb R} m(t,x,z)\,dz\right)\left(\int_{\mathbb R} \tilde m(t,x,z)\,dz\right)}\left(\int_{\mathbb R} \tilde m(t,x,z)-m(t,x,z)\,dz\right)\\
&\qquad +\frac{\tilde m(t,x,y_*)\left(m(t,x,y_*')-\tilde m(t,x,y_*')\right)}{\int_{\mathbb R} \tilde m(t,x,z)\,dz}\bigg]\,dy_*\,dy_*'\bigg|\\
&\quad \leq 3\Gamma_{A/2}(0)\int_{\mathbb R} \left|\tilde m(t,x,z)-m(t,x,z)\right|\,dz=3\Gamma_{A/2}(0)\int_{-R}^R \left|\tilde m(t,x,z)-m(t,x,z)\right|\,dz.
\end{align*}
\rb{$(n-\tilde n)$ satisfies:
\begin{align}
&\partial_t (n-\tilde n)(t,x,y)-\Delta_x (n-\tilde n)(t,x,y)\leq \left(1+\frac A2\right)(n-\tilde n)(t,x,y)\nonumber\\
&\qquad +\left(\int_{\mathbb R} \left(m(t,x,z)- \tilde m(t,x,z)\right)\,dz\right)\tilde n(t,x,y)  +3\gamma\Gamma_{A/2}(0)\int_{-R}^R |m(t,x,z)- \tilde m(t,x,z)|\,dz\nonumber\\
&\quad \leq \left(1+\frac A2\right)(n-\tilde n)(t,x,y)+6R\left(\|n^0\|_{L^\infty(\mathbb T^d\times[-R,R])}+\gamma \Gamma_{A/2}(0)\right)\|m-\tilde m\|_{L^\infty([0,\tau]\times\mathbb T^d\times[-R,R])},\label{est:ntildenexist}
\end{align}
where we have used \eqref{eq:estnlinfty} to estimate $0\leq \tilde n(t,x,y)$  from above. We notice that 
\[(t,x)\mapsto 6R\left(\|n^0\|_{L^\infty(\mathbb T^d\times[-R,R])}+\gamma \Gamma_{A/2}(0)\right)\|m-\tilde m\|_{L^\infty([0,\tau]\times\mathbb T^d\times[-R,R])}e^{(1+A/2)t}\]
is a super-solution of the parabolic equation \eqref{est:ntildenexist} for any fixed $y\in[-R,R]$, and then the comparison principle implies 
\begin{align}
&\rb{\max_{[0,\gr{\bar t}]\times\mathbb T^d\times[-R,R]}(n-\tilde n)}\nonumber\\
&\quad\leq 6R\left(\|n^0\|_{L^\infty(\mathbb T^d\times[-R,R])}+\gamma \Gamma_{A/2}(0)\right)\|m-\tilde m\|_{L^\infty([0,\tau]\times\mathbb T^d\times[-R,R])}\frac{e^{(1+A/2)\bar t}-1}{1+A/2}\nonumber\\
&\quad \leq\frac 12\|m-\tilde m\|_{L^\infty([0,\tau]\times\mathbb T^d\times[-R,R])},\label{eq:condT}
\end{align}}
provided we set $\bar t$ as follows 
\begin{equation}\label{eq:timeT}
\gr{\bar t}= \frac 1{C((\|n^0\|_{L^\infty(\mathbb T^d\times[-R,R])}+\gamma)R+1)},
\end{equation}
where $C>0$ is chosen large enough for \eqref{eq:est:bart} and \eqref{eq:condT} to hold.
\rb{If $(n-\tilde n)(t,x,y)\leq 0$, a similar argument can made on $(\tilde n-n)$, since $n$ and $\tilde n$ have symmetric properties. We then obtain, for $\bar t$ defined by \eqref{eq:timeT},}
\gr{\begin{align*}
\|n-\tilde n\|_{L^\infty([0,\gr{\bar t}]\times\mathbb T^d\times[-R,R])}&\leq \frac 12\|m-\tilde m\|_{L^\infty([0,\gr{\bar t}]\times\mathbb T^d\times[-R,R])}.
\end{align*}}
The Banach fixed-point theorem then shows that there is a unique fixed point $n_R$ of $F_{R,\bar t}$ in $\mathcal F_{R,\bar t}$. That fixed point $n_R\in\mathcal F_{R,\bar t}$  is a solution of \eqref{eq:nRsol} that satisfies \eqref{est:regn}.


\end{proof}
We can now  construct global (i.e. for $t\in\mathbb R_+$) solutions of the truncated model \eqref{eq:nRsol}:

\begin{prop}\label{prop:existencenR}
Let $y_{opt}\in W^{1,\infty}(\mathbb R_+\times\mathbb T^d,\mathbb R)$, $A>0$, $\gamma>\rb{2+A+\|y_{opt}\|^2_{L^\infty(\mathbb R_+\times\mathbb T^d)}}$, and $n^0$ satisfying Assumption~\ref{Assumption3}. For $R>1$, there is a unique global solution $n_R$ of \eqref{eq:nRsol} together with the initial data $n_R(0,x,y)=n^0(x,y)1_{|y|\leq R}$. More precisely, for $T>0$, there is a constant $C_\gamma>0$ independent from $R>0$ (but that depends on $\gamma$) such that $n_R$ satisfies
\begin{align}\label{est:regnR}
&\sup_{y\in\mathbb R}\left(\sup_{t\in[0,T)}\|n_R(t,\cdot,y)\|_{H^1(\mathbb T^d)}+\|n_R(\cdot,\cdot,y)\|_{L^2([0,T),H^2(\mathbb T^d))}+\|\partial_t n_R(\cdot,\cdot,y)\|_{L^2([0,T)\times \mathbb T^d)}\right) \leq C_\gamma,
\end{align}
and for $(t,x,y)\in [0,\infty)\times\mathbb T^d\times\mathbb R$, there is $\bar C>0$ independent from both $R$ and $\gamma$, such that
\begin{equation}\label{est:uppery2}
 n_R(t,x,y)\leq \frac {\bar C\gamma}{1+y^2}.
\end{equation}
\end{prop}
}\gr{

\begin{proof}[Proof of Proposition~\ref{prop:existencenR}]

Let $\tau\geq 0$. We assume that $n_R$ is a solution of \eqref{eq:nRsol} for $t\in[0,\tau]$ with initial data $(x,y)\mapsto n^0(x,y)1_{|y|\leq R}$. Thanks to an integration of \eqref{eq:nRsol} along $y\in\mathbb R$, $N_R(t,x)=\int n_R(t,x,y)\,dy$ is a \emph{strong} solution of the following parabolic equation (in the sense of Theorem 5(i) in Section 7.1.3 of \cite{Evans}) with $n_R$ as a given coefficient:
\begin{align}
\partial_t N_R(t,x)-\Delta_x N_R(t,x)&\leq  \left[1+\frac A2-N_R(t,x)\right]N_R(t,x)-\frac 12 \int_{-R}^R (y-y_{opt}(t,x))^2n_R(t,x,y)\,dy,\nonumber\\
&\leq \left[1+\frac A2-N_R(t,x)\right]N_R(t,x).\label{eq:subsnR}
\end{align}
We notice that  \rb{$\phi:(t,x)\mapsto \max\left(\int_{\mathbb R} \frac{C_0}{1+y^2}\,dy,1+\frac A2\right)$} satisfies $ N_R(0,x)\leq \phi(0,x)$ thanks to Assumption~\ref{Assumption3}, \rb{and is a super-solution of \eqref{eq:subsnR}. We can then apply the comparison principle} to show that $N_R(t,x)\leq \phi(t,x)$ for $(t,x)\in[0,\tau]\times\mathbb T^d$, that provides a uniform bound $\int_{\mathbb R} n_R(t,x,y)\,dy\leq C$ for some constant $C>0$ that only depends on $A$ and the constant $C_0$ from Assumption~\ref{Assumption3}. Then, 
\begin{align}
\partial_t n_R(t,x,y) -\Delta_x n_R(t,x,y) &\leq \left(1+\frac A2-\frac 12\left(y_{opt}(t,x)^2-2y\,y_{opt}(t,x)+y^2/2\right)-\frac \gamma 2\right)n_R(t,x,y)\nonumber\\
&\quad -\left(\frac \gamma 2+\frac {y^2}4 \right)n_R(t,x,y)+\gamma \Gamma_{A/2}(0)\int_{\mathbb R} n_R(t,x,y)\,dy\label{eq:supersoln2bis}\\
&= \left(\left(1+\frac A2+\frac {y_{opt}(t,x)^2}2 -\frac \gamma 2\right)-\frac 12\left(\sqrt 2y_{opt}(t,x)-\frac y{\sqrt 2}\right)^2\right)n_R(t,x,y)\nonumber\\
&\quad -\left(\frac \gamma 2+\frac {y^2}4 \right)n_R(t,x,y)+\gamma \Gamma_{A/2}(0)N_R(t,x)\nonumber\\
&\leq -\left(\frac \gamma 2+\frac {y^2}4 \right) n_R(t,x,y)+C\gamma,\label{eq:supersoln2}
\end{align}
since we have assumed $\gamma\geq \gr{2}\left(1+\frac A2+\frac 12\|y_{opt}\|_{L^\infty(\mathbb R_+\times\mathbb T^d)}^2\right)$. We notice that for any $y\in [-R,R]$, $(t,x)\mapsto \psi(\gr{x,}y)\gr{:=} \frac{4 C}{1+y^2}\gamma$ is a super-solution of \eqref{eq:supersoln2} that satisfies $\psi(x\gr{, y})\geq n^0(x,y)$ for $(x,y)\in\mathbb T^d\times [-R,R]$, provided $C>C_0/\gamma$, thanks to Assumption~\ref{Assumption3}. For $y\in[-R,R]$, the comparison principle applied to \eqref{eq:supersoln2} shows $n_R(t,x,y)\leq \psi(y)$ for $(t,x)\in[0,\tau]\times\mathbb T^d$. Since this holds for any $y\in[-R,R]$ and $n_R(t,x,y)=0$ if $|y|>R$, we have
\begin{equation}\label{eq:upb}
\forall (t,x,y)\in [0,\tau]\times\mathbb T^d\times\mathbb R,\quad n_R(t,x,y)\leq \frac{4 C}{1+y^2}\gamma.
\end{equation}
Thanks to Lemma~\ref{lem:exist}, we can extend the solution $n_R=n_R(t,x,y)$ into a solution of \eqref{eq:nRsol} on $[0,\tau+\bar t]\times\mathbb T^d\times[-R,R]$, with $n_R(\tau +t,x,y):=\tilde n_R(t,x,y)$, and $\bar t$ as in Lemma~\ref{lem:exist}. We notice that $\bar t$ is independent of $\tau\geq 0$, so that this extension argument can be iterated to construct a global solution of \eqref{eq:nRsol} satisfying \eqref{eq:upb}. Thanks to \eqref{est:uppery2}, for any fixed $\bar y\in[-R,R]$, we can see $(t,x)\mapsto n_R(t,x,\bar y)$ (where $n_R$ is the solution of \eqref{eq:nRsol}) as a solution of the heat equation $\partial_t n_R(t,x,\bar y)-\Delta_x n_R(t,x,\bar y)=f_{\bar y}(t,x)$ with a bounded $0-$order term: $|f_{\bar y}(t,x)|\leq C_\gamma$, where $C_\gamma>0$ is independent from $R$. $(t,x)\mapsto n_R(t,x,\bar y)$ is then a \emph{strong} solution (in the sense of \cite{Evans}) of that heat equation and Theorem 5 in Section 7.1.3 of  \cite{Evans} implies the regularity estimate \eqref{est:regnR}.

\end{proof}
}

\gr{

In Proposition~\ref{prop:existencenR}, we have proven the bound \eqref{est:uppery2}, that is independent from $R>0$, which will be very useful to consider the limit $R\to\infty$ of $n_R$ to construct solutions of (SIM) and prove Proposition~\ref{prop:existSIM}, see Section~\ref{sec:existence}. We can actually improve this estimates on tails of $n_R$, as we show in the following proposition:

\begin{prop}\label{prop:tailsnR}
Let $y_{opt}\in W^{1,\infty}(\mathbb R_+\times\mathbb T^d,\mathbb R)$, $A>0$, $\gamma>2+A+\|y_{opt}\|^2_{L^\infty(\mathbb R_+\times\mathbb T^d)}$, and  $n^0$ satisfying Assumption~\ref{Assumption3}. There is $\bar C>0$ (independent from $\gamma>0$) such that for any $R>1$, the global solution $n_R$ of \eqref{eq:nRsol}  with the initial data $n_R(0,x,y)=n^0(x,y)1_{|y|\leq R}$ satisfies 
\begin{equation}\label{est:uppery8}
 n_R(t,x,y)\leq \frac {\bar C\gamma}{1+y^{10}}.
\end{equation}
Moreover, for $T>0$, there is a constant $C_{T,\gamma}$ that may depend on $T$ and $\gamma$, but that is uniform in $R$, such that
\begin{align}\label{est:regn2}
&\textrm{sup}_{t\in[0,T]}\|n_R(t,\cdot,y)\|_{H^1(\mathbb T^d)}+\|n_R(\cdot,\cdot,y)\|_{L^2([0,T],H^2(\mathbb T^d))}+\|\partial_t n_R(\cdot,\cdot,y)\|_{L^2([0,T]\times \mathbb T^d)} \leq \frac {C_{T,\gamma}}{1+y^8}.
\end{align}
\end{prop}

\begin{proof}[Proof of Proposition~\ref{prop:tailsnR}]

Note that estimate \eqref{est:uppery2} provides an upper bound on $y^2n_R(t,x,y)$ that is uniform in $(t,x)\in\mathbb R_+\times\mathbb T^d$ and that does not depend on $R>0$. To improve this tail estimate further, we decompose the birth term as follows:
\begin{align}
&\iint_{\mathbb R^2} \Gamma_{A/2}\left(y-\frac{y_*+y_*'}2\right)\frac{n_R(t,x,y_*)n_R(t,x,y_*')}{\int_{\mathbb R} n_R(t,x,z)\,dz}\,dy_*\,dy_*'\nonumber\\
&\quad =\iint_{[-|y|/4,|y|/4]^2} \Gamma_{A/2}\left(y-\frac{y_*+y_*'}2\right)\frac{n_R(t,x,y_*)n_R(t,x,y_*')}{\int_{\mathbb R} n_R(t,x,z)\,dz}\,dy_*\,dy_*'\nonumber\\
&\qquad +\iint_{\mathbb R^2\setminus[-|y|/4,|y|/4]^2} \Gamma_{A/2}\left(y-\frac{y_*+y_*'}2\right)\frac{n_R(t,x,y_*)n_R(t,x,y_*')}{\int_{\mathbb R} n_R(t,x,z)\,dz}\,dy_*\,dy_*'.\label{est:I1I2}
\end{align}
We can estimate the first term on the right hand side of \eqref{est:I1I2} as follows:
\begin{align}
&\iint_{[-|y|/4,|y|/4]^2} \Gamma_{A/2}\left(y-\frac{y_*+y_*'}2\right)\frac{n_R(t,x,y_*)n_R(t,x,y_*')}{\int_{\mathbb R} n_R(t,x,z)\,dz}\,dy_*\,dy_*'\nonumber\\
&\quad \leq \left(\max_{y_*,y_*'\in[-|y|/4,|y|/4]}\Gamma_{A/2}\left(y-\frac{y_*+y_*'}2\right)\right)\iint_{[-|y|/4,|y|/4]^2} \frac{n_R(t,x,y_*)n_R(t,x,y_*')}{\int n_R(t,x,z)\,dz}\,dy_*\,dy_*'\nonumber\\
&\quad \leq \Gamma_{A/2}\left(\frac{3|y|}4\right)\int n_R(t,x,z)\,dz\leq Ce^{-\frac {9y^2}{16A}},\label{est:I1}
\end{align}
since $y_*,y_*'\in[-|y|/4,|y|/4]$ implies $\Gamma_{A/2}\left(y-\frac{y_*+y_*'}2\right)\leq \Gamma_{A/2}\left(3|y|/4\right)$. To estimate the last term of \eqref{est:I1I2}, we take advantage of \eqref{est:uppery2} to show:
\begin{align}
&\iint_{\mathbb R^2\setminus[-|y|/4,|y|/4]^2} \Gamma_{A/2}\left(y-\frac{y_*+y_*'}2\right)\frac{n_R(t,x,y_*)n_R(t,x,y_*')}{\int_{\mathbb R} n_R(t,x,z)\,dz}\,dy_*\,dy_*'\nonumber\\
&\quad \leq\int_{\mathbb R}\int_{|y_*|\geq |y|/4} \Gamma_{A/2}\left(y-\frac{y_*+y_*'}2\right)\frac{n_R(t,x,y_*)n_R(t,x,y_*')}{\int_{\mathbb R} n_R(t,x,z)\,dz}\,dy_*\,dy_*'\nonumber\\
&\qquad +\int_{|y_*'|\geq |y|/4}\int_{\mathbb R} \Gamma_{A/2}\left(y-\frac{y_*+y_*'}2\right)\frac{n_R(t,x,y_*)n_R(t,x,y_*')}{\int_{\mathbb R} n_R(t,x,z)\,dz}\,dy_*\,dy_*'\nonumber\\
&\quad \leq 2\int_{\mathbb R} \left(\int\Gamma_{A/2}\left(y-\frac{y_*+y_*'}2\right)\,dy_*\right)\left(\max_{z\in [-|y|/4,|y|/4]^c}|n_R(t,x,z)|\right)\frac{n_R(t,x,y_*')}{\int_{\mathbb R} n_R(t,x,z)\,dz}\,dy_*'\nonumber\\
&\quad \leq C\max_{z\in [-|y|/4,|y|/4]^c}|n_R(t,x,z)|\leq  \frac {C\gamma}{1+(|y|/4)^2}\leq \frac {C\gamma}{1+y^2},\label{est:I2}
\end{align}
for some constant $C>0$. We can reproduce here the argument developed in \eqref{eq:supersoln2bis}-\eqref{eq:supersoln2}, and use the estimates \eqref{est:I1I2}, \eqref{est:I1} and \eqref{est:I2} on the birth term to show 
\begin{align}
&\partial_t n_R(t,x,y) -\Delta_x n_R(t,x,y)\nonumber\\ 
&\quad \leq -\left(\frac \gamma 2+\frac {y^2}4 \right) n_R(t,x,y)+\iint_{\mathbb R^2} \Gamma_{A/2}\left(y-\frac{y_*+y_*'}2\right)\frac{n_R(t,x,y_*)n_R(t,x,y_*')}{\int_{\mathbb R} n_R(t,x,z)\,dz}\,dy_*\,dy_*'\nonumber\\
&\quad \leq   -\left(\frac \gamma 2+\frac {y^2}4 \right) n_R(t,x,y)+\frac {C\gamma}{1+y^2}.\label{eq:supersoln3}
\end{align}
We notice that for any $y\in[-R,R]$, $(t,x)\mapsto n_R(t,x,y)$ satisfies \eqref{eq:supersoln3}, while 
\[(t,x)\mapsto \psi(y):=\frac{\max(4 C\gamma,\bar C)}{1+y^4}\]
is a super-solution of \eqref{eq:supersoln3} that satisfies $\psi(y)\geq n^0(x,y)$ for $x\in\mathbb T^d$ thanks to Assumption~\ref{Assumption}. The comparison principle then implies $n_R(t,x,y)\leq \frac{C}{1+y^4}\gamma$ for $(t,x,y)\in\mathbb R_+\times\mathbb T^d\times [-R,R]$ and a constant $C>0$ independent from $R>0$. Since this holds for any $y\in\mathbb R$, we have
\begin{equation}\label{est:uppery40}
\forall (t,x,y)\in \mathbb R_+\times\mathbb T^d\times\mathbb R,\quad n_R(t,x,y)\leq \frac{C\gamma}{1+y^4}.
\end{equation}
This estimate can be used to obtain a better estimate \eqref{est:I2}. It then becomes
\begin{align*}
I_2(t,x,y)&\leq C\max_{z\in [-|y|/4,|y|/4]^c}|n_R(t,x,z)|\leq  \frac {C\gamma}{1+(|y|/4)^4}\leq \frac {C\gamma}{1+y^4},
\end{align*}
the argument above can be repeated to show that $n_R(t,x,y)\leq \frac{C\gamma}{1+y^6}$ for some constant $C>0$ independent of $R$. This estimate can be used again twice to improve the estimate \eqref{est:I2}, and iterate the argument once more to show \eqref{est:uppery8}.

\medskip

In the last iteration above, we have derived the following bound on the birth term:
\[\left|\gamma \iint_{\mathbb R^2} \Gamma_{A/2}\left(y-\frac{y_*+y_*'}2\right)\frac{n_R(t,x,y_*)n_R(t,x,y_*')}{\int_{\mathbb R} n_R(t,x,z)\,dz}\,dy_*\,dy_*'\right|\leq \frac {C\gamma^2}{1+y^{8}},\]
while the estimate \eqref{est:uppery8} implies
\[\left|\left(1+\frac A2-\gamma-\frac 12 (y-y_{opt}(t,x))^2-\int_{\mathbb R}n_R(t,x,z)\,dz\right)n_R(t,x,y)\right|\leq \frac {C\gamma(1+\gamma)}{1+y^8},\]
so that for $y\in[-R,R]$ and $T>0$, we have on $[0,T]\times\mathbb T^d$: 
\begin{equation}\label{est:nR}
\left|\partial_t n_R(t,x,y)-\Delta_x n_R(t,x,y)\right|\leq \frac {C\gamma(1+\gamma)}{1+y^8}.
\end{equation}
We can then apply Theorem 5  in Section 7.1.3 of \cite{Evans} to \eqref{est:nR} for each $y\in\mathbb R$, which implies \eqref{est:regn2}.

\end{proof}

\begin{rem}\label{rem:regnR}
In \eqref{est:nR}, we have an $L^\infty$ bound on $\partial_t n_R(t,x,y)-\Delta_x n_R(t,x,y)$, which is stronger than the $L^2$ estimate necessary to apply Theorem 5  in Section 7.1.3 of \cite{Evans}. We can take advantage of this to obtain a stronger regularity result: Thanks to Theorem 7.22 in \cite{Liebermann}, we have, for $y\in[-R,R]$, that the function $(t,x)\mapsto n_R(t,x,y)$ belongs to $W^{2,1}_{d+2}([0,T]\times\mathbb T^d)$ (we use here the notation $W^{2,1}_{d+2}$ from \cite{Liebermann}: the time derivative belongs to $L^{d+2}$, and its second spatial derivative belongs to $L^{d+2}$.). Moreover, \eqref{est:nR} implies that the $W^{2,1}_{d+2}([0,T]\times\mathbb T^d)$ norm of $(t,x)\mapsto n_R(t,x,y)$ is dominated by $\frac C{1+y^8}$, with a constant $C>0$ that may depend on $T$ and $\gamma>0$, but that is independent from $R>0$. This weighted regularity estimate will be useful in Section~\ref{subsec:regNV}
\end{rem}
}

\subsection{Regularity of the macroscopic quantities $N$, $Z$ and $V$}\label{subsec:regNV}
\gr{
Let $n$ a solution of (SIM). Thanks to the uniform regularity of $n_R$ described in Remark~\ref{rem:regnR} and an integration against $1$, $y$ and $y^4$ respectively, $N(t,x)$, $Y(t,x):=\int_{\mathbb R} yn(t,x,y)\,dy$ and $W(t,x):=\int_{\mathbb R} y^4n(t,x,y)\,dy$ satisfy
\[N,Y,W\in W^{2,1}_{d+2}([0,T]\times\mathbb T^d),\]
where we use here the notation $W^{2,1}_{d+2}$ from \cite{Liebermann}: the time derivative of the function belongs to $L^{d+2}$, and its second spatial derivative belongs to $L^{d+2}$.

Let $\varphi\in L^\infty([0,T]\times\mathbb T^d)$, and $R>0$. If $n$ is a solution of (SIM) and $\varphi\in C^2([0,T]\times\mathbb T^d)$, we can use $\varphi(t,x)1_{[-R,R]}$ (resp. $\varphi(t,x)y1_{[-R,R]}$, $\varphi(t,x)y^41_{[-R,R]}$) as a test function, and let $R\to \infty$ to show that $N$ solves \eqref{eq:Nmod}, while $Y$ and $W$ solve
\begin{align}\label{eq:Y}
\partial_t Y(t,x)-\Delta_x Y(t,x)=\left(1+\frac A2-N(t,x)\right)Y(t,x)-\frac 12\int_{\mathbb R}y(y-y_{opt})^2n(t,x,y)\,dy,
\end{align}
\begin{align}\label{eq:W}
&\partial_t W(t,x)-\Delta_x W(t,x)=\left(1+\frac A2-N(t,x)\right)W(t,x)-\frac 12\int_{\mathbb R}y^4(y-y_{opt})^2n(t,x,y)\,dy\\
&\quad +\gamma\left(N(t,x)\int_{\mathbb R} |y|^4 T(\tilde n(t,x,\cdot))(y)\,dy-W(t,x)\right).
\end{align}

We can notice that the right hand side of the equations \eqref{eq:Nmod}, \eqref{eq:Y} and \eqref{eq:W} after a derivation in $t$ or in $x$, belong to $L^{d+2}([0,T]\times\mathbb T^d)$ uniformly in $R$, thanks to Remark~\ref{rem:regnR}. We can then apply Theorem 7.22  in \cite{Liebermann} to show that $\partial_t N,\,\partial_t Y,\,\partial_t W,\,\nabla_x N,\, \nabla_x Y$ and $\nabla_x W$  belong to $W^{2,1}_{d+2}([0,T]\times\mathbb T^d)$. Morrey's inequality then implies that $N,Y,W$ are $C^{1}$ functions in the $t$ variable, and $C^2$ functions in the $x$ variable. They are then \emph{classical solutions} of \eqref{eq:Nmod}, \eqref{eq:Y} and \eqref{eq:W}: they satisfy these equalities pointwise, as an equality between continuous functions. In particular, $\partial_t$ and $\Delta_x$ are the actual differential operators, and we can compute $(\partial_t-\Delta_x)\frac ZN$ to show \eqref{eq:Zmod}.

\medskip

In the above argument, we use interior parabolic estimates, \rb{specifically Theorem 7.22  in \cite{Liebermann}}. These estimates do not hold up to the boundary $\{t=0\}$ directly, but it is possible to use traditional ideas to get around this difficulty: it is possible to transform these problems into equations that hold for $t\in\mathbb R$ thanks to the regularity of the initial condition (provided by Assumption~\ref{Assumption}), we refer to Step~4 of the proof of Proposition~\ref{prop:regularityNZ} where we develop the details of a similar argument.

}

\subsection{Proof of Proposition~\ref{prop:uniqueness}: uniqueness of solutions of (SIM)}
\gr{
Since $n$ and $\tilde n$ are  solutions of (SIM), we can reproduce the lower bound argument made in the proof of Proposition~\ref{prop:existSIM} (see \eqref{eq:eqsubsol}-\eqref{eq:lowern}), that is based on the comparison principle, to show that the exists $C_T>0$ such that $\int_{\mathbb R}n(t,x,y)\,dy\geq C_T$ and $\int_{\mathbb R}\tilde n(t,x,y)\,dy\geq C_T$.

Thanks to \cite{Evans} (Theorem 3, p. 287) and using the fact that $n(0,x,y)=\tilde n(0,x,y)=n^0(x,y)$ for $(x,y)\in\mathbb T^d\times\mathbb R$, we obtain that for $t\in[0,T]$,
\begin{align}
&\int_{\mathbb R}\int_{\mathbb T^d} (n(t,x,y)-\tilde n(t,x,y))^2(1+y^4)\,dx \,dy+\int_0^t
\int_{\mathbb R}\int_{\mathbb T^d} |\nabla_xn(s,x,y)-\nabla_x\tilde n(s,x,y)|^2(1+y^4)\,dx \,dy\,ds\nonumber\\
&\quad =\int_0^t\int_{\mathbb T^d}\int_{\mathbb R} \left(1+\frac A2-\gamma-\frac 12(y-y_{opt}(s,x))^2-\int_{\mathbb R} n(s,x,z)\,dz\right)(n(s,x,y)-\tilde n(s,x,y))^2(1+y^4)\,dy\,dx\,ds\nonumber\\
&\qquad +\int_0^t\int_{\mathbb T^d}\int_{\mathbb R}\left(\int_{\mathbb R} n(s,x,z)-\tilde n(s,x,z)\,dz\right)\tilde n(s,x,y)(n(s,x,y)-\tilde n(s,x,y))(1+y^4)\,dy\,dx\,ds\nonumber\\
&\qquad + \gamma\int_0^t\int_{\mathbb R} \int_{\mathbb T^d}(n(s,x,y)-\tilde n(s,x,y))\bigg[-(n(s,x,y)-\tilde n(s,x,y))\nonumber\\
&\qquad \iint_{\mathbb R^2} \Gamma_{A/2}\left(y-\frac{y_*+y_*'}2\right)\left(\frac{n(s,x,y_*)n(s,x,y_*')}{\int_{\mathbb R} n(s,x,z)\,dz}-\frac{\tilde n(s,x,y_*)\tilde n(s,x,y_*')}{\int_{\mathbb R} \tilde n(s,x,z)\,dz}\right)\,dy_*\,dy_*'\bigg](1+y^4)\,dx\,dy\,ds\nonumber\\
&\quad \leq \left(1+\frac A2- \gamma\right) \int_0^t\int_{\mathbb R}\int_{\mathbb T^d} (n(s,x,y)-\tilde n(s,x,y))^2(1+y^4)\,dx \,dy\,ds\nonumber\\
&\qquad + \left(\max_{s\in[0,t],x\in\mathbb T^d}\int_{\mathbb R}n(s,x,y)\sqrt{1+y^4}\,dy \right)^{\frac 12}\nonumber\\
&\qquad \int_0^t\left(\int_{\mathbb R}\int_{\mathbb T^d} (n(s,x,y)-\tilde n(s,x,y))^2(1+y^4)\,dx \,dy\right)^{\frac 12}\left(\int_{\mathbb R}\int_{\mathbb T^d} |n(s,x,y)-\tilde n(s,x,y)|\,dx \,dy\right)\,ds\label{est:uni2}\\
&\qquad  +\frac \gamma 2\int_0^t\left\{\int_{\mathbb R}\int_{\mathbb T^d} (n(s,x,y)-\tilde n(s,x,y))^2(1+y^4)\,dx \,dy\right\}^{\frac 12}\bigg\{\int_{\mathbb R}\int_{\mathbb T^d} \bigg[\iint_{\mathbb R^2} \Gamma_{A/2}\left(y-\frac{y_*+y_*'}2\right)\nonumber\\
&\qquad \bigg(\frac{n(s,x,y_*)n(s,x,y_*')}{\int_{\mathbb R} n(s,x,z)\,dz}-\frac{\tilde n(s,x,y_*)\tilde n(s,x,y_*')}{\int_{\mathbb R} \tilde n(s,x,z)\,dz}\bigg)\,dy_*\,dy_*'\bigg]^2(1+y^4)\,dx\,dy\bigg\}^{\frac 12}\,ds,\label{est:uni}
\end{align}
and we can estimate the last term appearing in brackets in \eqref{est:uni} as follows:
\begin{align*}
\{\cdot\}&\leq C\int_{\mathbb T^d} \left[\max_{y\in\mathbb R}\iint_{\mathbb R^2} \Gamma_{A/2}\left(y-\frac{y_*+y_*'}2\right)(1+y^2)\bigg|\frac{n(t,x,y_*)n(t,x,y_*')}{\int_{\mathbb R} n(t,x,z)\,dz}-\frac{\tilde n(t,x,y_*)\tilde n(t,x,y_*')}{\int_{\mathbb R} \tilde n(t,x,z)\,dz}\bigg|\,dy_*\,dy_*'\right]\\
&\quad \left[\int_{\mathbb R}\iint_{\mathbb R^2} \Gamma_{A/2}\left(y-\frac{y_*+y_*'}2\right)(1+y^2)\bigg|\frac{n(t,x,y_*)n(t,x,y_*')}{\int_{\mathbb R} n(t,x,z)\,dz}-\frac{\tilde n(t,x,y_*)\tilde n(t,x,y_*')}{\int_{\mathbb R} \tilde n(t,x,z)\,dz}\bigg|\,dy_*\,dy_*'\,dy\right]\,dx.
\end{align*}
We notice the following estimates satisfied by the reproduction kernel $\Gamma_{A/2}$:
\begin{align*} \Gamma_{A/2}\left(y-\frac{y_*+y_*'}2\right)(1+y^2)&\leq  \Gamma_{A/2}\left(y-\frac{y_*+y_*'}2\right)C\left(1+\left(y-\frac{y_*+y_*'}2\right)^2+y_*^2+(y_*')^2\right)\\
&\leq C\left(1+y_*^2+(y_*')^2\right),
\end{align*}
for any $y\in\mathbb R$, as well as
\begin{align*}
\int_{\mathbb R}\Gamma_{A/2}\left(y-\frac{y_*+y_*'}2\right)(1+y^2)\,dy&\leq  \int_{\mathbb R}\Gamma_{A/2}\left(y-\frac{y_*+y_*'}2\right)C\left(1+\left(y-\frac{y_*+y_*'}2\right)^2+y_*^2+(y_*')^2\right)\,dy\\
&\leq C\left(1+y_*^2+(y_*')^2\right),
\end{align*}
and then,
\begin{align*}
&\iint_{\mathbb R^2}C\left(1+y_*^2+(y_*')^2\right)\bigg|\frac{n(t,x,y_*)n(t,x,y_*')}{\int_{\mathbb R} n(t,x,z)\,dz}-\frac{\tilde n(t,x,y_*)\tilde n(t,x,y_*')}{\int_{\mathbb R} \tilde n(t,x,z)\,dz}\bigg|\,dy_*\,dy_*'\\
&\quad \leq \iint_{\mathbb R^2}C\left(1+y_*^2+(y_*')^2\right)\bigg|\frac{\left(n(t,x,y_*)-\tilde n(t,x,y_*)\right)n(t,x,y_*')}{\int_{\mathbb R} n(t,x,z)\,dz}\nonumber\\
&\qquad +\frac{\tilde n(t,x,y_*)n(t,x,y_*')}{\left(\int_{\mathbb R} n(t,x,z)\,dz\right)\left(\int_{\mathbb R} \tilde n(t,x,z)\,dz\right)}\left(\int_{\mathbb R} \tilde n(t,x,z)- n(t,x,z)\,dz\right)\nonumber\\
&\qquad + \frac{\tilde n(t,x,y_*)\left(n(t,x,y_*')-\tilde n(t,x,y_*')\right)}{\int_{\mathbb R} \tilde n(t,x,z)\,dz}\bigg|\,dy_*\,dy_*'\\
&\quad \leq C\int_\mathbb R (1+y^2)\left|n(t,x,y)-\tilde n(t,x,y)\right|\,dy.
\end{align*}
We can then continue the estimation of the last term appearing in brackets in \eqref{est:uni}:
\begin{align*}
\{\cdot\}&\leq C\int_{\mathbb T^d}\left[\int_\mathbb R (1+y^2)\left|n(t,x,y)-\tilde n(t,x,y)\right|\,dy\right]^2\,dx\\
&\leq C\int_{\mathbb T^d}\left[\int_\mathbb R \frac 1{1+y^2}\,dy\right]\left[\int_\mathbb R (1+y^2)^2\left|n(t,x,y)-\tilde n(t,x,y)\right|^2\,dy\right]\,dx\\
&\leq C \int_{\mathbb T^d}\int_\mathbb R \left|n(t,x,y)-\tilde n(t,x,y)\right|^2\left(1+y^4\right)\,dy\,dx
\end{align*}
Similarly, we can estimate the last factor of \eqref{est:uni2}:
\[\int_{\mathbb R}\int_{\mathbb T^d} |n(s,x,y)-\tilde n(s,x,y)|\,dx \,dy\leq \left(\int_{\mathbb R}\int_{\mathbb T^d} |n(s,x,y)-\tilde n(s,x,y)|^2(1+y^4)\,dx \,dy\right)^{\frac 12}.\]
Brought together, these estimates imply
\begin{align*}
&\int_{\mathbb R}\int_{\mathbb T^d} (n(t,x,y)-\tilde n(t,x,y))^2(1+y^4)\,dx \,dy+\int_0^t
\int_{\mathbb R}\int_{\mathbb T^d} |\nabla_xn(s,x,y)-\nabla_x\tilde n(s,x,y)|^2(1+y^4)\,dx \,dy\,ds\nonumber\\
&\quad \leq C\int_0^t \int_{\mathbb R}\int_{\mathbb T^d} (n(s,x,y)-\tilde n(s,x,y))^2(1+y^4)\,dx \,dy\,ds.
\end{align*}
In particular $y(t):=\int_{\mathbb R}\int_{\mathbb T^d} (\tilde n(t,x,y)- n(t,x,y))^2\left(1+y^4\right)\,dx\,dy$ satisfies $y(t)\leq C\int_0^t y(s)\,ds$, and a Gronwall estimate shows that $y(t)=0$ for $t\in[0,T]$, that is $n=\tilde n$, which concludes the proof of Proposition~\ref{prop:uniqueness}.
}

\subsection{Technical estimates for some linear problems}\label{subsec:tecnical}
In this section, we derive estimates on solutions of linear parabolic problems that are used in Section~\ref{subsec:Wasserstein} (proof of Proposition~\ref{prop:Wassersteincontraction}). We consider the assumption made in Proposition~\ref{prop:Wassersteincontraction}, and in particular: $y_{opt},\,n^0$ satisfying Assumption~\ref{Assumption}, $n\in L^\infty(\mathbb R_+\times\mathbb T^d,L^1((\mathbb R))$ a solution of  (SIM) with initial condition $n^0$, and $\tilde n$, $N$, $Z$ defined by \eqref{def:tilden} and \eqref{NZ}, and we assume that $\|Z\|_{L^\infty([0,\tau)\times \mathbb T^d)}\leq\kappa$, for some $\tau>0$ and $\kappa>0$.

\medskip

\noindent\textbf{Some linear parabolic equations}

For $(s,z,y)\in [0,\tau)\times \mathbb T^d\times \mathbb R$, let $\phi_{s,z,y}(t,x)$ the solution of \eqref{eq:phi}. Let $(t,x)\mapsto \psi_{s,z,y}(t,x):=\phi_{s,z,y}(t,x)N(t,x)$, which satisfies the following linear parabolic equation:
\begin{equation*}
\left\{
\begin{array}{l}
\partial_t\psi_{s,z,y}(t,x)-\Delta_x\psi_{s,z,y}(t,x)\\
\qquad =\left(1+\frac A 2-N(t,x)-\frac 12 (y-y_{opt}(t,x))^2-\frac 12\int (y-y_{opt}(t,x))^2\tilde n(t,x,y)\,dy\right)\psi_{s,z,y}(t,x) , \\
\qquad (t,x)\in [s,\tau)\times \mathbb T^d,\\
\psi_{s,z,y}(s,x)=N(s,z)\delta_z(x),\; x\in \mathbb T^d.
\end{array}
\right.
\end{equation*}
Since the factor on the right hand side of the equation satisfied by $\psi_{s,z,y}$ is bounded (see Proposition~\ref{prop:moment4}), the existence and uniqueness of $\psi_{s,z,y}$ derives from standards methods (see e.g. Theorem~7.3 and Theorem~7.4 in \cite{Evans}), and this implies the existence and uniqueness of the solution $\phi_{s,z,y}$ of \eqref{eq:phi}.

We also define $\bar \phi_{s,z}(t,x)$ as the solution of:
\begin{equation}\label{eq:barphi}
\left\{
\begin{array}{l}
\partial_t \bar \phi_{s,z}(t,x)-\Delta_x \bar\phi_{s,z}(t,x) = 2\frac{\nabla_x N(t,x)}{N(t,x)} \cdot\nabla_x \bar \phi_{s,z}(t,x), \;(t,x)\in [s,\tau)\times \mathbb T^d,\\
\bar \phi_{s,z}(s,x)=\delta_z(x),\; x\in \mathbb T^d.
\end{array}
\right.
\end{equation}
\gr{Thanks to Section~\ref{subsec:regNV}, the quotient $\frac{\nabla_x N(t,x)}{N(t,x)}$ is continuous, the existence and uniqueness of solution then follows from standard arguments.} Alternatively, we can notice that $\bar \psi_{s,z}(t,x):=\bar \phi_{s,z}N(t,x)$ satisfies
\begin{equation*}
\left\{
\begin{array}{l}
\partial_t\bar\psi_{s,z}(t,x)-\Delta_x\bar \psi_{s,z}(t,x)\\
\quad =\left(1+\frac A 2-N(t,x)-\frac 12\int (y-y_{opt}(t,x))^2\tilde n(t,x,y)\,dy\right)\bar\psi_{s,z}(t,x), \;(t,x)\in [s,\tau)\times \mathbb T^d,\\
\bar \psi_{s,z}(s,x)=N(s,z)\delta_z(x),\; x\in \mathbb T^d,
\end{array}
\right.
\end{equation*}
which is a linear heat equation, and the unique solution is then given by  an explicit Duhamel formula. It can then be used to construct $\bar \phi_{s,z}$.

\medskip

\noindent\textbf{Estimate 1}

Thanks to Proposition~\ref{prop:moment4}, there exists $C_\kappa>0$ such that $\int |y|^4 \tilde n(t,x,y)\,dy\leq C_\kappa$ for any $(t,x)\in [0,\tau)\times \mathbb T^d$, and we can define
\begin{equation}\label{def:R}
R=\left(2C_\kappa\right)^{1/4}.
\end{equation}
Then, for any $(t,x)\in [0,\tau)\times \mathbb T^d$,
\begin{equation}\label{est:phi1}
\int_{-R}^R\tilde n(t,x,y)\,dy=1-\int_{[-R,R]^c}\tilde n(t,x,y)\,dy\geq 1-\frac 1{R^4}\int_{[-R,R]^c}|y|^4\tilde n(t,x,y)\,dy\geq \frac 12.
\end{equation}
Let also
\begin{equation}\label{def:rho}
R'=R+\|y_{opt}\|_{L^\infty(\mathbb R_+\times \mathbb T^d)}.
\end{equation}
Then, for any $y\in[-R',R']^c$, we have $-\frac 12(y-y_{opt}(t,x))^2\leq \min_{\tilde y\in[-R,R]}\left(-\frac 12\left(\tilde y-y_{opt}(t,x)\right)^2\right)$. The maximum principle (see Remark~\ref{rem:comparaison}) applies to \eqref{eq:phi} (comparing the case $y\in[-R',R']^c$ to the case where $\tilde y\in[-R,R]$), and then, for $y\in[-R',R']^c$,
\begin{equation}\label{est:phi2}
\forall (s,z)\in[0,\tau)\times \mathbb T^d,\,\forall (t,x)\in [s,\tau)\times\mathbb T^d,\quad \phi_{s,z,y}(t,x)\leq \min_{|\tilde y|\leq R}\phi_{s,z,\tilde y}(t,x).
\end{equation}

\medskip

\noindent\textbf{Estimate 2}

For any $y\in\mathbb R$, 
\[-\frac 12\left(y+\textrm{sgn}(y)\|y_{opt}\|_{L^\infty(\mathbb R_+\times \mathbb T^d)}\right)^2\leq -\frac 12(y-y_{opt}(t,x))^2\leq -\frac 12\left(y-\textrm{sgn}(y)\|y_{opt}\|_{L^\infty(\mathbb R_+\times \mathbb T^d)}\right)^2\]
Then $\phi_{s,z,y}(t,x)e^{(t-s)\frac 12\left(y+\textrm{sgn}(y)\|y_{opt}\|_{L^\infty(\mathbb R_+\times \mathbb T^d)}\right)^2}$ is a super-solution of \eqref{eq:barphi}, and thanks to the comparison principle, $\bar \phi_{s,z}(t,x)\leq \phi_{s,z,y}(t,x)e^{(t-s)\frac 12\left(y+\textrm{sgn}(y)\|y_{opt}\|_{L^\infty(\mathbb R_+\times \mathbb T^d)}\right)^2}$. The reverse estimate can be obtained similarily, and together, those estimates imply for any $(s,z,y)\in [0,\tau)\times \mathbb T^d\times\mathbb R$ and $(t,x)\in (s,\min(s+1,\tau))\times \mathbb T^d$,
\begin{equation}\label{est:barphi1}
\phi_{s,z,y}(t,x)= \bar \phi_{s,z}(t,x)e^{-(t-s)\frac 12 (y+\mathcal O(1))^2},
\end{equation}
where $|\mathcal O(1)|\leq \|y_{opt}\|_{L^\infty(\mathbb R_+\times \mathbb T^d)}$.

\medskip

\noindent\textbf{Estimate 3}

$\bar\psi_{s,z}$ satisfies $\bar\psi_{s,z}(s,\cdot)=N(s,z)\delta_z$ and
\[\partial_t\bar\psi_{s,z}(t,x)-\Delta_x\bar\psi_{s,z}(t,x)\leq \left(1+\frac A2\right)\bar\psi_{s,z}(t,x).\]
Thanks to the comparison principle, $\bar\psi_{s,z}(t,x)\leq N(s,z)e^{\left(1+\frac A2\right)(t-s)}\Gamma_{t-s}(x-z)$, \rb{ where the notation $(t,x)\mapsto \Gamma_t(x)$ designates the fundamental solution of the heat equation on $\mathbb T^d$, and $x-y$ stands for the substraction of $x$ by $y$ on that torus. Since} $\bar \psi_{s,z}(t,x)=\bar \phi_{s,z}(t,x)N(t,x)$, we have\rb{, for $\theta\in(0,1)$,}
\[\int_{\mathbb R} \bar\phi_{s,z}(t,x)|z-x|^\theta\,dz\leq e^{\left(1+\frac A2\right)(t-s)}\int_{\mathbb R}\Gamma_{t-s}(x-z)\frac{N(s,z)}{N(t,x)}|z-x|^\theta\,dz.\]
We can use the estimate \eqref{est:lowerN2} to show that $\left|\frac{N(s,z)}{N(t,x)}\right|\leq C_\kappa$, as soon as $1<s\leq t\leq \min(s+1,\tau)$. If $0\leq s\leq t\leq 2$, we can use the lower bound \eqref{est:lowerN1} and the upper bound $\|N\|_{L^\infty(\mathbb R_+\times\mathbb T^d)}\leq \max\left(1,\|N(0,\cdot)\|_{L^\infty(\mathbb T^d)}\right)$ to obtain a similar estimate. Then,
\begin{equation}\label{est:barphi3}
\int_{\mathbb R} \bar\phi_{s,z}(t,x)|z-x|^\theta\,dz\leq  C_\kappa e^{\left(1+\frac A2\right)(t-s)}\int_{\mathbb R} \Gamma_{t-s}(x-z)|z-x|^\theta\,dz\leq C_\kappa  (t-s)^{\frac\theta 2},
\end{equation}
provided $0<s\leq t\leq \min(s+1,\tau)$.

\medskip

\noindent\textbf{Estimate 4}

For $(\bar t,\bar x)\in(0,+\infty)\times \mathbb T^d$, let $u_{\bar t,\bar x}$ the solution of the following problem (note that the \emph{time variable} is here reversed compared to usual problems)
\begin{equation}\label{eq:dual}
\left\{\begin{array}{l}
-\frac {\partial u_{\bar t,\bar x}}{\partial t}(t,x)-\Delta_{x} u_{\bar t,\bar x}(t,x)= -2\nabla_x\cdot\left(\frac{\nabla_x N(t,x)}{N(t,x)}u_{\bar t,\bar x}(t,x)\right),\quad (t,x)\in (-\infty,\bar t]\times \mathbb T^d\\
u_{\bar t,\bar x}(\bar t,x)=\delta_{\bar x}(x).
\end{array}\right.
\end{equation}
This problem is indeed the dual problem of \eqref{eq:barphi} in the sense that $\frac d{dt}\int_{\mathbb R} \bar \phi_{s,z}(t,x)u_{\bar t,\bar x}(t,x)\,dx=0$ for $t\in[s,\bar t]$. It follows that for any $s<\bar t$ and $z\in\mathbb T^d$, 
\[\int_{\mathbb R} \bar \phi_{s,z}(s,x)u_{\bar t,\bar x}(s,x)\,dx=\int_{\mathbb R} \bar \phi_{s,z}(t,x)u_{\bar t,\bar x}(\bar t,x)\,dx,\]
which, given the initial conditions specified in \eqref{eq:barphi} and \eqref{eq:dual} (note that the reversion of time in this dual problem implies that the initial condition holds for the largest time considered, ie $t=\bar t$), is equivalent to
\[u_{\bar t,\bar x}(s,z)=\bar \phi_{s,z}(\bar t,\bar x).\]
The divergence form of \eqref{eq:dual} implies that $\int_{\mathbb R} u_{\bar t,\bar x}(s,z)\,dz=\int_{\mathbb R} u_{\bar t,\bar x}(\bar t,z)\,dz=1$, and then for any $\bar t>s$ and $\bar x\in\mathbb T^d$,

\begin{equation}
\int_{\mathbb R} \bar \phi_{s,z}(\bar t,\bar x)\,dz=1.\label{est:barphi2}
\end{equation}

\subsection{Existence, uniqueness and stability of solutions of (KBM)}\label{Appendix:uniquenessSIM}

\gr{In this section, we first show that Proposition~\ref{prop:Linftyboundwithoutalpha} implies the existence of solutions to (KBM). Note that there are probably more direct proofs of this result. The stability of solutions of (KBM) is then proven Proposition~\ref{prop:uniqNZ}, which also implies the uniqueness of solutions. Note that thsi stability result is important to show \eqref{eq:cv}}

\medskip

\gr{We consider the assumptiuons made in Proposition~\ref{prop:Linftyboundwithoutalpha}, denote by $n_\gamma$ the solution of (SIM) and $N_\gamma$, $Z_\gamma$ defined by \eqref{NZ}.  Thanks to Proposition~\ref{prop:Linftyboundwithoutalpha}, $Z_\gamma$ is uniformly bounded for $(t,x)\in\mathbb R_+\times\mathbb T^d$ and $\gamma>\bar \gamma$.} We may then apply  Proposition~\ref{prop:regularityNZ} to show that $N_\gamma,Z_\gamma$ are uniformly $\theta-$Lipschitz continuous. Thanks to Ascoli, they converge to a limit $(\bar N,\bar Z)$ up to an extraction. We consider this subsequence from now on (the uniqueness of solutions proven in Proposition~\ref{prop:uniqueness} will show that the convergence holds without taking a subsequence). This limit is then  itself $\theta-$Lipschitz continuous. We can define $Y_\gamma:=N_\gamma Z_\gamma$, $\bar Y:=\bar N\bar Z$, and note that $\|N_\gamma\|_{L^\infty([0,\tau)\times\mathbb T^d)}+\|Y_\gamma\|_{L^\infty([0,\tau)\times\mathbb T^d)}\leq C$. Moreover, $(N_\gamma,Y_\gamma)$ satisfies a system of equations where only the $0^{th}$ order terms are non-linear (note that we already considered $Y_\gamma$ in Section~\ref{subsec:regNV}, see \eqref{eq:Y}):
\begin{equation}\label{eq:NA}
\left\{\begin{array}{l}
\partial_t N_\gamma(t,x)-\Delta_x N_\gamma(t,x)=\left(1-\frac 1{2}\left(Z_\gamma(t,x)-y_{opt}(t,x)\right)^2-N_\gamma(t,x)+\varphi_{N,\gamma}(t,x)\right)N_\gamma(t,x),\\
\partial_tY_\gamma(t,x)-\Delta_x Y_\gamma(t,x)=\left(1-\frac 1{2}\left(Z_\gamma(t,x)-y_{opt}(t,x)\right)^2-N_\gamma(t,x)+\varphi_{N,\gamma}(t,x)\right)Y_\gamma(t,x)\\
\qquad+\left(-A\left(Z_\gamma(t,x)-y_{opt}(t,x)\right)+\varphi_{Z,\gamma}(t,x)\right)N_\gamma(t,x),
\end{array}
\right.
\end{equation}
\gr{Applying Theorem 7.22 in \cite{Liebermann} to these equations show that $N_\gamma,Y_\gamma\in W_{d+2}^{2,1}(\mathbb R_+\times\mathbb T^d)$ (using the notations of \cite{Liebermann}) with a norm that is uniform in $\gamma>\bar \gamma$. Since $\bar N$ and $\bar Z$ are bounded, the factor of $N_\gamma$ on the right hand side of the first equation in \eqref{eq:NA} is uniformly bounded, and then 
\begin{equation}\label{est:loNbarN}
N_\gamma(t,x)\geq Ce^{-t/C},\quad \bar N(t,x)\geq Ce^{-t/C},
\end{equation}
for $(t,x)\in \mathbb R_+\times\mathbb T^d$. The second inequality being a consequence of the uniform bound on $N_\gamma$ and the convergence of $N_\gamma$ to $\bar N$ when $\gamma\to\infty$. This lower bound on $\bar N$ can be used in \eqref{eq:NA} to show that $\bar N$ and $\bar Y$ satisfy, in a weak sense (integrated against smooth test functions),
\begin{align}
\partial_t \bar N(t,x)-\Delta_x \bar N(t,x)&=\left(1-\frac 1{2}\left(\bar Z(t,x)-y_{opt}(t,x)\right)^2-\bar N(t,x)\right)\bar N(t,x),\nonumber\\
\partial_t\bar Y(t,x)-\Delta_x \bar Y(t,x)&=\left(1-\frac 1{2\bar N(t,x)^2}\left(\bar Y(t,x)-y_{opt}(t,x)\bar N(t,x)\right)^2-\bar N(t,x)\right)\bar Y(t,x)\nonumber\\
&\quad-A\left(\bar Y(t,x)-y_{opt}(t,x)\bar N(t,x)\right).\label{eq:barNY}
\end{align}
Since $\bar N,\bar Y\in W_{d+2}^{2,1}(\mathbb R_+\times\mathbb T^d)$ the right hand side of these equations belong to $L^{d+2}([0,T]\times\mathbb T^d)$ for any $T>0$. Applying Theorem 7.22 in \cite{Liebermann} and Morrey's inequality, we have that $\bar N$, $\bar Y$ are $C^1$ in $t$ and $C^2$ in $x$, and \eqref{eq:barNY} is satisfied as an equality between continuous functions. }Coming back to (KBM) with $\bar Z=\bar Y/\bar N$ shows that $(\bar N,\bar Z)$ is a solution of (KBM). This proves the existence of a solution to (KBM) under  Assumption~\ref{Assumption}.

\medskip

We are now interested in the uniqueness of solutions of (KBM), and use the notation $ W_{d+2}^{1,2}(\mathbb R_+\times\mathbb T^d)$ of \cite{Liebermann} to write the following result:

\gr{
\begin{prop}\label{prop:uniqNZ}
Let $y_{opt}\in C^1(\mathbb R_+\times \mathbb T^d)$, $N^0,Z^0\in W^{2,\infty}(\mathbb T^d)$, such that $N^0>0$. 

Let $\varphi_N,\varphi_Z\in L^\infty(\mathbb R_+\times\mathbb T^d)$ and $N,Z,\bar N, \bar Z\in W_{d+2}^{1,2}$ that are $C^1$ in $t$ and $C^2$ in $x$, with $N(t,x)>0$, $\bar N(t,x)>0$ for $(t,x)\in\mathbb R_+\times\mathbb T^d$. We assume that $(N,Z)$ satisfies 
\begin{equation}\label{eq:NAbis}
\left\{\begin{array}{l}
\partial_t N(t,x)-\Delta_x N(t,x)=\left(1-\frac 1{2}\left(Z(t,x)-y_{opt}(t,x)\right)^2-N(t,x)+\varphi_{N}(t,x)\right)N(t,x),\\
\partial_tZ(t,x)-\Delta_x Z(t,x)=2\frac{\nabla_x N(t,x)}{N(t,x)}\cdot \nabla_x Z(t,x)-A\left(Z(t,x)-y_{opt}(t,x)\right)+\varphi_{Z}(t,x),
\end{array}
\right.
\end{equation}
while $(\bar N,\bar Z)$ satisfies (KBM), as an equality between continuous functions in both cases, and with the same initial condition $(N(0,\cdot),Z(0,\cdot))=(\bar N(0,\cdot),\bar Y(0,\cdot))=(N^0,Z^0)$. If
\[|\varphi_N(t,x)|+\varphi_Z(t,x)|\leq \frac {\bar C}{\gamma^\theta}+\bar C 1_{[0,1/\gamma^\theta]}(t),\]
for some $\bar C>0$ and $\gamma>1$. For any $T>0$, there is $\hat C>0$ independent from $\gamma>0$ such that
\begin{equation}\label{est:NNYY}
\|(N-\bar N)(t,\cdot)\|_{L^\infty([0,T]\times \mathbb T^d)}+\|(Z-\bar Z)(t,\cdot)\|_{L^\infty([0,T]\times \mathbb T^d)}\leq \frac {\hat C}{\gamma^\theta}.
\end{equation}

\end{prop}
This proposition shows the uniqueness of solutions of (KBM) under the assumptions above. It also shows the convergence of solutions $(N,Z)$ of \eqref{eq:thm} to the solution $(\bar N,\bar Z)$ of (KBM) when $\gamma\to\infty$, see \eqref{eq:cv}.

\begin{proof}[Proof of Proposition~\ref{prop:uniqNZ}]

Since $N,Z,\bar N, \bar Z\in W_{d+2}^{1,2}$, it is possible to use the comparison principle to show explicit upper bounds on $|Z|$, $|\bar Z|$ and $N$ that are uniform in $\gamma>1$. We can use these to prove a lower bound on $N$ (see \eqref{est:loNbarN}) that is also uniform in $\gamma>1$. 

\medskip

We can define $Y=ZN$ and $\bar Y=\bar Z\bar N$ and write the equations satisfied by $Y$, $\bar Y$, see \eqref{eq:NA}, \eqref{eq:barNY}. Note that in these equation, the non-liear terms do not involve any derivative of the functions.  Since $N,Y,\bar N,\bar Y$ are bounded functions and $N,\bar N$ are bounded from below independently from $\gamma>1$ on $[0,T]\times\mathbb T^d$, we can then estimate 
\begin{equation}\label{est:NbarN}
\begin{array}{l}
\partial_t (N-\bar N)(t,x)-\Delta_x (N-\bar N)(t,x)= \mathcal O(1)(N-\bar N)(t,x)+\mathcal O(1)(Y-\bar Y)(t,x)+\mathcal O(1)\varphi_N(t,x),\\
\\
\partial_t (Y-\bar Y)(t,x)-\Delta_x (Y-\bar Y)(t,x)= \mathcal O(1)(N-\bar N)(t,x)+\mathcal O(1)(Y-\bar Y)(t,x)+\mathcal O(1)\varphi_N(t,x)\\
\phantom{\partial_t (Y-\bar Y)(t,x)-\Delta_x (Y-\bar Y)(t,x)= }+\mathcal O(1)\varphi_Z(t,x),
\end{array}
\end{equation}
where the notation $\mathcal O(1)$ denotes functions $[0,T]\times\mathbb R$ with an $L^\infty$ norm that is independent from $\gamma>0$. Let $C>0$ such that the coefficients on the right hand side of the system above are dominated by $C>0$, ie $|\mathcal O(1)|\leq C$. We consider the following differential equation:
\[y'(t)=2Cy+C\left(\frac {\bar C}{\gamma^\theta}+2\bar C 1_{[0,1/\gamma^\theta]}(t)\right),\]
together with $y(0)=\varepsilon>0$. The solution to this ODE is 
\begin{align*}
y(t)&=\varepsilon e^{2Ct}+Ce^{2Ct}\int_0^te^{-2Cs}\left(\frac {\bar C}{\gamma^\theta}+2\bar C 1_{[0,1/\gamma^\theta]}(t)\right)\,ds\leq\varepsilon e^{2Ct}+ \frac{\bar Ce^{2Ct}}{2\gamma^\theta}+2\frac {C\bar C}{\gamma^\theta}e^{2Ct}\\
&\leq \varepsilon e^{2Ct}+\frac{\hat C}{\gamma^\theta}e^{2Ct},
\end{align*}
for $\gamma>1$ and $\hat C:=\bar C/2+2C\bar C$. We can now define $\hat N(t,x):=y(t)$ and $\hat Z(t,x):= y(t)$, that satisfy
\begin{align}\label{est:hatN}
\partial_t \hat N(t,x)-\Delta_x \hat N(t,x)= C\hat N+C\hat Y+C\left(\frac {\bar C}{\gamma^\theta}+2\bar C 1_{[0,1/\gamma^\theta]}(t)\right)> C\hat N(t,x)+C\hat Y(t,x)+C\varphi_N(t,x),
\end{align}
as well as $\hat N(0,x)>(N-\bar N)(0,x)=0$ for $x\in\mathbb T^d$. Similarly, $-\hat N$ satisfies $-\hat N(0,x)<(N-\bar N)(0,x)=0$ and
\begin{align}\label{est:hatN2}
\partial_t (-\hat N)(t,x)-\Delta_x (-\hat N)(t,x)< C(-\hat N)(t,x)+C(-\hat Y)(t,x)-C\varphi_N(t,x).
\end{align}
The same estimates can be obtained on $\hat Z$, replacing $\varphi_N$ by $\varphi_Z$. We now use a contradiction argument. At $t=0$,we have $|(N-\bar N)(0,x)|=0<\hat N(0,x)$ and  $|(Z-\bar Z)(0,x)|=0<\hat Z(0,x)$ for $x\in\mathbb T^d$. Since $N,Y,\bar N,\bar Y$ are $C^1$ in $t$ and $C^2$ in $x$, the first time when $\max\left(|(N-\bar N)(t,x)|,|(Z-\bar Z)(t,x)|\right)=y(t)$ leads to a contradiction since the inequalities in \eqref{est:hatN} and \eqref{est:hatN2} (and the similar equations on $\hat Z$, $-\hat Z$) are strict while $(N-\bar N)$, $(Z-\bar Z)$ satisfy \eqref{est:NbarN}. 
Thanks to this contradiction argument, $\max\left(|(N-\bar N)(t,x)|,|(Z-\bar Z)(t,x)|\right)\leq y(t)$ for $t\geq 0$. since this holds for any $\varepsilon>0$, we have \eqref{est:NNYY} if we redefine $\hat C$ as $\hat Ce^{2CT}$.

\end{proof}}

\section*{Acknowledgement}
The author is very grateful to Eric A. Carlen and Maria C. Carvalho for instructive  discussions and advice on hydrodynamic limits methods. He acknowledges support from the ANR under grants MODEVOL: ANR-
13-JS01-0009, MECC: ANR-13-ADAP-0006, Kibord: ANR-13-BS01-0004, DEEV
ANR-20-CE40-0011-01; it was also funded by the European Union (ERC-Adg SINGER, 101054787).

\signgr

\end{document}